\numberwithin{equation}{section}
\newtheorem{theorem}{Theorem}[section]
\newtheorem{prop}[theorem]{Proposition}
\newtheorem{problem}[theorem]{Problem}
\newtheorem{lemma}[theorem]{Lemma}
\newtheorem{definition}[theorem]{Definition}
\newtheorem{assumption}[theorem]{Assumption}
\theoremstyle{remark}
\newtheorem{remark}[theorem]{Remark}
\newtheorem{example}[theorem]{Example}
\begin{document}

\title{Skew-product systems over infinite interval exchange transformations}
\author{Henk Bruin}
\address{Henk Bruin, Faculty of Mathematics, University of Vienna, Oskar-Morgenstern-Platz 1, 1090 Vienna, Austria}
\email {henk.bruin@univie.ac.at} 
\author{Olga Lukina}
\address{Olga Lukina, Mathematical Institute, Leiden University, P.O. Box 9512, 2300 RA Leiden, The Netherlands}
\email{o.lukina@math.leidenuniv.nl}
\thanks{{\it 2020 Mathematics Subject Classification:} Primary: 37C83, 37E05, 28D05, Secondary: 37B10, 37E20, 37E35}
\thanks{{\it Keywords:} infinite interval exchange transformation, substitution, skew-products, essential values, ergodicity, discrepancy, diffusion coefficient}
\date{Version August 26, 2024. Revised March 16, 2025}

\maketitle

\begin{abstract}
We study the ergodic properties (recurrence, discrepancy, diffusion coefficients and ergodicity itself) of
a class of $\mathbb{Z}$-extensions over infinite interval exchange transformations called rotated odometers.
The choice of a skew-function is motivated by the use in the study of parallel flows on a particular staircase manifold of
infinite genus.
\end{abstract}

\section{Introduction}\label{sec:intro}

A rotated odometer $F_\pi:I \to I$ is a type of infinite interval exchange transformation (IIET)
on the half-open unit interval $I = [0,1)$, introduced in \cite{BL23-JLMS}.
In this paper, we study $\mathbb{Z}$-extensions, that is, skew-products on $I \times \mathbb{Z}$ 
over these rotated odometers, and ask ourselves questions on their recurrence or transience, discrepancy, diffusion coefficients, and ergodicity.

\subsection{Rotated odometers}
The dyadic odometer is the group action on $\{0,1\}^{\mathbb N}$ given  by the addition of $1$ to the left-most digit with infinite carry to the right.
The dyadic odometer has a structure of a profinite group, and so admits a unique ergodic invariant measure
(Haar measure). It is well known that, as a measure-preserving dynamical system, the dyadic odometer is isomorphic to the Lebesgue-measure preserving von Neumann-Kakutani map on the unit interval $I = [0,1)$:
\begin{align}\label{eq-odometer}
 \mathfrak{a}(x) =  x - (1-3 \cdot 2^{-n}) \qquad  \text{ if } x \in I_n =  [1-2^{1-n}, 1-2^{-n}),\ n \geq 1.
\end{align}
A \emph{rotated odometer} is obtained as a composition of $\mathfrak{a}$ with a permutation of a partition of $I$ into $q$ intervals of equal length.
Namely, take a positive integer $q$ that is not a power of $2$ (see \cite{BL23} for the case when $q = 2^m$, $m \in \mathbb N$), and a permutation $\pi$ of the symbols $\{0,\dots, q-1\}$.
Divide the unit interval into $q$ equal-sized half-open intervals
$I_j = [j/q, (j+1)/q)$, $j = 0, \dots, q-1$, and define
$$
 R_\pi:I \to I, \qquad  x \mapsto x+\frac{\pi(j)-j}{q} \text{ if } x \in I_j.
$$
For instance, a rotation by $p/q$, for $0 < p \leq q-1$, is an example of such a map, but not every $R_\pi$ is induced by a rotation.

\begin{definition}\label{defn-rotated}
A \emph{rotated odometer} is an IIET given by the composition 
  $$F_\pi = \mathfrak{a} \circ R_\pi : I \to I,$$ where $\pi$ is a permutation of $q$ symbols.
\end{definition}

The dynamics of rotated odometers was studied in \cite{BL23-JLMS} using methods of symbolic dynamics, see Section~\ref{sec:rotated} for details. In particular, associated to the aperiodic subsystem of a rotated odometer, there is an eventually periodic sequence of substitutions $(\chi_k)_{k \geq 1}$, and $q \times q$-substitution matrices $M_k$, $k \geq 1$, which have non-negative integer entries. In this paper, we restrict to the case when the sequences of substitutions and the associated matrices are stationary, i.e., $\chi_k = \chi$ and $M_k = M$ for all $k \geq 1$, for some substitution $\chi$ and a $q \times q$ matrix $M$.
Since all rotated odometers have an eventually periodic sequence of substitutions, they all have a return map described by a stationary sequence.
We use the properties of $\chi$ and $M$ in our analysis of the $\mathbb{Z}$-extensions of rotated odometers below.

\begin{remark}\label{remark-translation}
As shown in \cite[Theorem 1.1]{BL23-JLMS}, every rotated odometer in Definition~\ref{defn-rotated} can be realized as the first return map of a flow of rational slope on an infinite type translation surface $L$, where $L$ has finite area, infinite genus, and at least one and at most a finite number of punctures,
see Section~\ref{sec-lochness}; see \cite{Rbook} for more on translation surfaces of infinite type.
\end{remark}

\subsection{Non-ergodicity of $\mathbb{Z}$-extensions of rotated odometers}
Throughout this article, we use a probability measure $\mu$ on (a subset of) $I = [0,1]$ (most frequently, $\mu$ is Lebesgue) that is preserved by $F_\pi$.
We denote the corresponding measure-preserving dynamical system  by $(I,F_\pi,\mu)$. We use the notation $(I,F_\pi)$ when we only speak about the topological properties of the system.

To define the skew-product of a rotated odometer $(I,F_\pi,\mbox{Leb})$, let $\psi: I \to \mathbb{Z}$ be a {\em skew-function} given by
\begin{align}\label{skew-function}
  \psi(x) = \begin{cases}
              \ 1,  & 0 \leq x < \frac{1}{2}, \\
              -1, &  \frac{1}{2} \leq x < 1.
            \end{cases}
\end{align}
The skew-product of $F_\pi$ and $\psi$ is given by
\begin{equation}\label{eq-skewprod}
T_\pi: I \times \mathbb{Z} \to I \times \mathbb{Z}, \qquad (x,n) \mapsto (F_\pi(x), n+\psi(x)),
\end{equation}
with invariant measure $\mbox{Leb} \otimes \nu$, where $\nu$ is the counting measure on $\mathbb{Z}$. The skew-product \eqref{eq-skewprod} is also called the $\mathbb Z$-\emph{extension} of $F_\pi$ with skew-function $\psi$.

\begin{remark}
The choice of the skew-function $\psi$ in \eqref{skew-function} is motivated by the interpretation of rotated odometers as the first return maps of flows of rational slope on a translation surface, see Remark~\ref{remark-translation}. From this point of view, the skew-product \eqref{eq-skewprod} is interpreted as the first return map of the lifted flow on a $\mathbb{Z}$-fold covering space of the translation surface in Remark~\ref{remark-translation}, to the section $I \times \mathbb{Z}$, see Section \ref{sec-staircase} for details.
The choice of the points of discontinuity for the step function $\psi$ is motivated by the geometry of the staircase.
\end{remark}

\begin{problem}
Is the skew-product dynamical system \eqref{eq-skewprod} recurrent or transient?
\end{problem}

A necessary (but not sufficient) condition for the skew-product to be ergodic is that it is recurrent to a subsection $I \times \{0\}$, i.e., $\mbox{Leb}$-a.e.\ $x \in I \times \{0\}$ returns to $I \times \{0\}$
infinitely often.
Since the invariant measure $\mbox{Leb} \otimes \nu$ is infinite, this recurrence doesn't follow directly
from Poincar\'e recurrence, but by a result of Atkinson \cite{At1976}, $T_\pi$ is recurrent
if and only if $\int_I \psi \, d  \mbox{Leb} =0$.

If $\pi$ is the identity permutation, then $F_\pi  = \mathfrak{a}$, and we denote the skew-product map by $T_\mathfrak{a}$. In this case $(I \times \mathbb{Z}, T_\mathfrak{a})$ is the first return map to $I \times \mathbb{Z}$ of the vertical flow on the staircase, see Section \ref{sec-staircase}.
This flow itself, i.e., the $\mathbb{R}$-extension of the von Neumann-Kakutani map $\mathfrak{a}$, has been studied for various classes of skew-functions, see for instance \cite{Hellekalek1987,Pask1991} and references therein. In particular, \cite{Hellekalek1987} studies $\mathbb{R}$-extensions when the skew-product function is a step function with two discontinuities. For $\psi$ as in \eqref{skew-function}, \cite[Theorem 2]{Hellekalek1987}  implies that the first return map of the vertical flow to the section $I \times \mathbb Z$ is not ergodic. 

\begin{problem}\label{prob-non-ergodic}
Show that, for any $q \ne 2^m$, $m \in \mathbb{N}$, and any permutation $\pi$ of $q$ intervals, the skew-product $(I \times \mathbb{Z}, T_\pi, \mbox{Leb} \otimes \nu)$ is not ergodic.
\end{problem}

Since the family of all rotated odometers is rather large, and they exhibit diverse properties, we cannot give an affirmative answer to Problem~\ref{prob-non-ergodic} for a $\mathbb{Z}$-extension of any rotated odometer. Rather, we give a few criteria which allow us to rule out ergodicity, and present examples which satisfy these criteria. So far we have not found an example of an ergodic $\mathbb{Z}$-extension in this context.

\begin{remark}
Ergodicity of parallel flows on $\mathbb Z^d$-extensions of translation surfaces and, more generally, of $\mathbb Z^d$-extensions of interval exchange transformations have been an active area of research recently, see, for instance, \cite{FU14,FH18} and references therein. In particular, a criterion for non-ergodicity for some infinite billiards and $\mathbb Z$-covers of translation surfaces was developed in \cite{FU14}. A sufficient criterion for non-ergodicity of flows on $\mathbb Z^d$-covers of compact translation surfaces was developed in \cite{FH18}. Our criteria apply to the class of $\mathbb Z$-extensions of translations surfaces not covered by the assumptions in \cite{FU14,FH18}, namely those which have wild singularities, and whose first return map to $I \times \mathbb Z$ is a $\mathbb Z$-extension of an infinite IET.
\end{remark}

Our approach to this problem is as follows.
In Section~\ref{sec:renorm} we recall from \cite{BL23-JLMS} that
there is a sequence of substitutions $(\chi_k)_{k \geq 1}$, associated to a rotated odometer $(I,F_\pi, \mbox{Leb})$, which assigns to each $i \in \mathcal{A} = \{0,\ldots, q-1\}$ a finite word $\chi_k(i)$ in the alphabet $\mathcal{A}$.
Since there are only finitely many permutations on $q$ letters,
$(\chi_k)_{k \geq 1}$ is (pre-)periodic.
If this sequence is constant, then the rotated odometer $(I,F_\pi)$ is called \emph{stationary}, and we write $\chi_k = \chi_1 = \chi$. 

\begin{assumption}\label{assum-stationarycovering}
Let $(I,F_\pi)$ be a rotated odometer. In what follows, we assume that:
\begin{enumerate}
\item The rotated odometer $(I,F_\pi)$ is stationary,
\item Lebesgue measure $\mbox{Leb}$ is ergodic for $(I,F_\pi)$. Note that we do not assume that $\mbox{Leb}$ is the unique invariant measure, and in fact most our examples are not uniquely ergodic.
\end{enumerate}
\end{assumption}

Using the coding procedure described in Proposition~\ref{lem:periodicpermutations-1}, for each $x \in I$ we can describe a finite piece of its orbit of arbitrary length as a subword of a substitution word $\chi^r(a)$, for some $a \in \mathcal{A}$, and $r \in \mathbb{N}$ large enough. Using the skew-function $\psi$ from \eqref{skew-function}, we can associate to $\chi(a)$ its \emph{weight} $\psi(\chi(a))$ which is an integer, see Section~\ref{sec:doub} for details. Similarly we can associate weights $\psi(\ell_j)$ to left eigenvectors $\ell_j$, $0 \leq j \leq q-1$, of the matrix $M$ describing the substitution. These weights play a central role in our proofs.

\begin{remark}
The standard approach to study the dynamics of systems given by sequences of substitutions is by associating to them an $S$-\emph{adic subshift}, see Section~\ref{subsec:subst} for details. In an $S$-adic shift, the phase space is obtained as the closure of a sequence
$\rho = \lim_k \chi_1 \circ \dots \circ \chi_k(a_k)$ in $\mathcal A^{\mathbb N_0}$ with cylinder set topology, and the dynamics is given by a shift action. For rotated odometers, $\rho$ corresponds to the orbit of $0$ which is transitive in the minimal subset of $I$, but not transitive in $I$, unless $(I,F_\pi,\mbox{Leb})$ is uniquely ergodic, which is not always the case.  That is the reason we do not use the formalism of $S$-adic shifts in our results.
\end{remark}

We now state two criteria for non-ergodicity of rotated odometers. The first one is formulated in terms of the weights of the substitution words.

\begin{theorem}\label{thm-1.0-non-essential}
Let $(I,F_\pi,\mbox{Leb})$ be a stationary rotated odometer. Let $T_\pi: I \times \mathbb{Z} \to I \times \mathbb{Z}$ be its $\mathbb{Z}$-extension with skew-function $\psi$ as in \eqref{skew-function}, and  $\boldsymbol{d} := \gcd\{ \psi(\chi(a)) : a \in \mathcal{A}\}$. If $\boldsymbol{d} \geq 2$, then the skew-product is not ergodic.
\end{theorem}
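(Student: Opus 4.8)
The plan is to establish non-ergodicity by exhibiting a non-constant $T_\pi$-invariant measurable function. Fix a primitive $\boldsymbol{d}$-th root of unity $\zeta = e^{2\pi i/\boldsymbol{d}}$ and look for a function of the form $\Phi(x,n) = \zeta^{n} f(x)$ with $f \colon I \to S^1$ measurable. A direct computation shows that $\Phi \circ T_\pi = \Phi$ if and only if $f \circ F_\pi = \zeta^{-\psi} f$; equivalently, writing $f = \zeta^{c}$ with $c \colon I \to \mathbb{Z}/\boldsymbol{d}\mathbb{Z}$, one needs a measurable solution of the cohomological equation $c \circ F_\pi = c - \psi \pmod{\boldsymbol{d}}$. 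Since $\boldsymbol{d} \geq 2$, any such $\Phi$ depends genuinely on $n \bmod \boldsymbol{d}$ and is therefore non-constant on a set of positive $\mbox{Leb}\otimes\nu$-measure, so producing $c$ (equivalently, showing that $\psi \bmod \boldsymbol{d}$ is a measurable coboundary for $F_\pi$) immediately rules out ergodicity. In the language of the keywords, this amounts to showing that the group of essential values of the cocycle $\psi$ is contained in $\boldsymbol{d}\mathbb{Z} \subsetneq \mathbb{Z}$.

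The key structural input is that $\boldsymbol{d}$ divides the weight of every substitution word: $\boldsymbol{d} \mid \psi(\chi^{r}(a))$ for all $a \in \mathcal{A}$ and $r \geq 1$. This follows from the additivity of the weight under concatenation: writing $\chi^{r-1}(a) = c_1 \cdots c_m$ one has $\psi(\chi^{r}(a)) = \sum_{i=1}^{m} \psi(\chi(c_i))$, and every summand is divisible by $\boldsymbol{d}$ by the very definition $\boldsymbol{d} = \gcd\{\psi(\chi(a)) : a \in \mathcal{A}\}$.

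To build $c$ I would use the coding of Proposition~\ref{lem:periodicpermutations-1}, which organizes $I$ (up to a $\mbox{Leb}$-null set, and on the minimal set carrying $\mbox{Leb}$) into a refining sequence of Kakutani--Rokhlin towers: for each level $r$ there are $q$ towers, one per letter $a \in \mathcal{A}$, of height $|\chi^{r}(a)|$, and the level-$(r+1)$ tower over $a$ is obtained by stacking the level-$r$ towers according to the word $\chi(a)$. On each tower I define $c$ to be constant on the base and propagate it upward by the forced rule $c(F_\pi^{j}x) = c(x) - \psi_j(x) \pmod{\boldsymbol{d}}$, where $\psi_j$ denotes the partial weight along the coding word. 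The divisibility statement of the previous paragraph guarantees consistency between consecutive levels: each time the orbit completes a level-$r$ subtower inside a level-$(r+1)$ tower it accrues a weight $\psi(\chi^{r}(c_i)) \equiv 0 \pmod{\boldsymbol{d}}$, so the propagated values at the bases of the level-$r$ subtowers agree with the level-$r$ definition. Hence the locally constant functions $c_r$ built at level $r$ form a coherent sequence and converge $\mbox{Leb}$-a.e.\ to a measurable $c$ solving the cohomological equation.

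The main obstacle is the bookkeeping that makes this construction rigorous rather than the underlying idea. Concretely, I expect the delicate points to be: (i) checking that the partial-weight functions are consistent across all levels simultaneously, that is, that the coherent sequence $(c_r)$ is well defined on the full support of $\mbox{Leb}$ and not merely on each individual tower, which is exactly where stationarity of $\chi$ (Assumption~\ref{assum-stationarycovering}) and the divisibility lemma are used; (ii) handling the discontinuity of $\psi$ at $\tfrac12$ and the alignment of the base intervals $I_j$ with the coding partition, which is the reason the weights $\psi(\chi(a))$ must be set up carefully as in Section~\ref{sec:doub}; and (iii) ensuring $\Phi$ is non-constant on a positive-measure set even when $(I,F_\pi,\mbox{Leb})$ is not uniquely ergodic, for which it suffices that $c$ is defined and the cohomological relation holds $\mbox{Leb}$-a.e. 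Once $c$ is in hand, the $T_\pi$-invariance of $\Phi$ together with its non-constancy yields non-ergodicity at once.
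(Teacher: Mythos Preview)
Your approach is correct, but it differs from the paper's. The paper proves $E(\Psi)\subset\boldsymbol{d}\mathbb{Z}$ directly from the definition of essential values: it takes the test set $B=L_k$ (the $k$-th section) and observes that every return to $L_k$ occurs after a time $n=\sum_i|\chi(a_i)|$ with itinerary the concatenation $\chi(a_1)\cdots\chi(a_t)$, whose total weight is a multiple of $\boldsymbol{d}$; hence no $e\notin\boldsymbol{d}\mathbb{Z}$ can satisfy the essential-value condition for this $B$. Your route instead constructs the transfer function $c$ explicitly and produces a non-constant invariant function $\Phi(x,n)=\zeta^{n+c(x)}$. These are two faces of the same fact (the cocycle $\psi\bmod\boldsymbol{d}$ is a coboundary), and your construction is essentially the paper's Lemma~\ref{lem:cob} carried out modulo $\boldsymbol{d}$.

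One simplification: you do not need the full refining tower hierarchy or any limit $c_r\to c$. Because $\chi$ is covering (Assumption~\ref{assum-stationarycovering} via Proposition~\ref{prop:LebErg}), the level-$1$ towers over $L_{1,a}$, $a\in\mathcal{A}$, already partition $I$. Setting $c\equiv 0$ on $L_1$ and propagating by $c(F_\pi^j x)=-\sum_{i<j}\psi(F_\pi^i x)\pmod{\boldsymbol{d}}$ for $x\in L_{1,a}$, $0\le j<|\chi(a)|$, defines $c$ everywhere at once; the single consistency check is at the top of each tower, where the accrued weight is $\psi(\chi(a))\equiv 0\pmod{\boldsymbol{d}}$, so $c$ glues back to $0$ on $L_1$. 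Your concerns (i) and (iii) therefore dissolve: no inverse limit is needed, and non-constancy of $\Phi$ in the $\mathbb{Z}$-variable is immediate from $\boldsymbol{d}\ge 2$. Concern (ii) is genuine but is handled exactly by the doubling of the middle symbol in Section~\ref{sec:doub}, which ensures $\psi$ is constant on each element of the coding partition. The paper's argument buys brevity (no transfer function is written down); yours buys an explicit invariant function and makes the link to Lemma~\ref{lem:cob} transparent.
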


Example~\ref{ex-q11} of a rotated odometer with $q = 11$ satisfies the hypothesis of Theorem~\ref{thm-1.0-non-essential} with ${\bf d} = 2$, and so it is not ergodic. Example~\ref{ex-9-2ue} with $q=9$ also satisfies Theorem~\ref{thm-1.0-non-essential} with ${\bf d} = 2$. For the latter, we can give a more precise description of the ergodic properties using Theorem~\ref{thm-uniqergod-pisot} below.

\begin{theorem}\label{thm-uniqergod-pisot}
 Let $(I,F_\pi,\mbox{Leb})$ be a stationary rotated odometer.  Let $T_\pi: I \times \mathbb{Z} \to I \times \mathbb{Z}$ be its $\mathbb{Z}$-extension with skew-function $\psi$ as in \eqref{skew-function}. Assume that for all eigenvalues
 $\lambda_j$ of the associated matrix $M$ of $\chi$ with norm $|\lambda_j| \geq 1$, the left eigenvectors have weights
 $\psi(\ell_j) = 0$, with the exception of one, say, for $\lambda_c > 1$, which is Pisot. Suppose the algebraic and the geometric multiplicities of $\lambda_c$ are equal. 
Then Lebesgue measure $\mbox{Leb}$ of the $\mathbb{Z}$-extension $T_\pi$ has infinitely many ergodic components.
\end{theorem}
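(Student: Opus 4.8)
The plan is to reduce the statement to the triviality of the group of essential values of the cocycle $\psi$ and then to exhibit an infinite group of measurable eigenvalues for the associated twisted transfer operators. Recall first that $T_\pi$ is recurrent: by Atkinson's criterion this holds because $\int_I \psi\, d\mbox{Leb}=0$. Fourier analysis along the $\mathbb{Z}$-fibre decomposes $L^2(I\times\mathbb{Z})$ as a direct integral over $t\in\mathbb{T}=\mathbb{R}/\mathbb{Z}$ of copies of $L^2(I,\mbox{Leb})$, on which $T_\pi$ acts by the twisted operator $U_t f = e^{2\pi i t\psi}\,(f\circ F_\pi)$. Writing $\Lambda=\{t\in\mathbb{T}:\ \exists\, g_t\in L^2(I),\ |g_t|=1,\ g_t\circ F_\pi=e^{2\pi i t\psi}\,g_t\}$ for the group of measurable eigenvalues, each $t\in\Lambda$ yields a $T_\pi$-invariant function $\Phi_t(x,n)=g_t(x)\,e^{-2\pi i t n}$, and by the Schmidt--Aaronson duality between essential values and eigenvalues of recurrent cocycles one has $E(\psi)=\Lambda^{\perp}=\{k\in\mathbb{Z}:\ tk\in\mathbb{Z}\ \text{for all } t\in\Lambda\}$. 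It therefore suffices to prove that $\Lambda$ is infinite: a nonzero subgroup $d\mathbb{Z}\subseteq\mathbb{Z}$ has finite annihilator in $\mathbb{T}$, so $\Lambda$ infinite forces $E(\psi)=\Lambda^\perp=\{0\}$, and the ergodic decomposition of $\mbox{Leb}\otimes\nu$ then has infinitely many components.

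To construct eigenvalues I would use the stationary substitution structure. By the coding of Proposition~\ref{lem:periodicpermutations-1} the orbits of $F_\pi$ organise into Kakutani--Rokhlin towers: at level $r$ the tower over a letter $b\in\mathcal{A}$ has height $|\chi^r(b)|$, and the net $\mathbb{Z}$-displacement of $\psi$ from its base to its top equals the weight $\psi(\chi^r(b))$ from Section~\ref{sec:doub}. Fixing $t\in\mathbb{T}$, I would define $g_t$ on the level-$r$ tower over $b$ as $e^{2\pi i t\,S_k\psi}$ on its $k$-th floor, up to a base constant $c_r(b)$, where $S_k\psi$ is the Birkhoff sum along the tower. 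Passing from level $r$ to $r+1$ (the tower over $b$ being the concatenation of level-$r$ towers prescribed by $\chi(b)$) imposes a cocycle relation on the $c_r(b)$, and the limit $g_t=\lim_r$ exists as a measurable unimodular eigenfunction precisely when the phases $\{t\,\psi(\chi^r(b))\}$ converge modulo $1$ as $r\to\infty$, compatibly for all $b$.

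The convergence of these phases is where the hypotheses enter. From the eigenvalue expansion of the substitution matrix $M$ recalled in Section~\ref{sec:doub}, the integer weights decompose as $\psi(\chi^r(b))=\sum_j \lambda_j^{\,r}\,\kappa_{j,b}$, and the assumption $\psi(\ell_j)=0$ for every eigenvalue with $|\lambda_j|\ge 1$ except $\lambda_c$ annihilates every growing contribution other than the $\lambda_c$-term; the equality of algebraic and geometric multiplicity of $\lambda_c$ rules out a Jordan block, so no polynomial-in-$r$ factor appears. Hence $\psi(\chi^r(b))=C_b\,\lambda_c^{\,r}+o(1)$ with $C_b\in\mathbb{Q}(\lambda_c)$ and an error governed by the eigenvalues of modulus $<1$. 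Since $\lambda_c$ is Pisot, all its Galois conjugates lie in the open unit disc, whence $\|\alpha\,\lambda_c^{\,r}\|\to 0$ for every $\alpha\in\mathbb{Z}[\lambda_c]$: the trace $\sum_i \alpha^{(i)}(\lambda_c^{(i)})^{r}$ is a rational integer and the conjugate terms with $|\lambda_c^{(i)}|<1$ decay. Taking $t=\lambda_c^{-m}$ for $m\ge 1$ then gives $\{t\,\psi(\chi^r(b))\}=\{C_b\,\lambda_c^{\,r-m}\}+o(1)\to 0$ for every $b$, so each $\lambda_c^{-m}$ lies in $\Lambda$. As $\lambda_c>1$, the points $\lambda_c^{-m}\bmod 1$ are infinitely many distinct elements of $\Lambda$, so $\Lambda$ is infinite and, by the first paragraph, $E(\psi)=\{0\}$, completing the argument.

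The main obstacle I anticipate is the rigorous construction and convergence of the eigenfunctions $g_t$: one must control the $o(1)$ error in $\psi(\chi^r(b))=C_b\lambda_c^{\,r}+o(1)$ uniformly in $b$, verify that the base constants $c_r(b)$ stabilise compatibly across all towers so that a single $g_t$ is well defined $\mbox{Leb}$-a.e.\ and unimodular, and ensure that the coefficients $C_b$ are genuinely $\mathbb{Z}[\lambda_c]$-integral (or clear their denominators by replacing $t=\lambda_c^{-m}$ with $t=\beta\lambda_c^{-m}$ for a fixed $\beta$, which still produces infinitely many eigenvalues). A secondary point requiring care is that, under Assumption~\ref{assum-stationarycovering}, $\mbox{Leb}$ need not be uniquely ergodic, so the tower frequencies, the direct-integral decomposition, and the essential-value/eigenvalue duality must all be set up with respect to $\mbox{Leb}$ over the merely ergodic base $(I,F_\pi,\mbox{Leb})$.
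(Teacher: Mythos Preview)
Your approach is valid but genuinely different from the paper's. The paper works directly on the essential-value side: for each nonzero integer $e$ it exhibits a $q2^{kN}$-adic interval $A$ so small that every return Birkhoff sum $S_n\psi(x)$ with $x,F_\pi^n x\in A$ decomposes as $\sum_i \psi(\chi^{k_0+s-|i|}(B_i))$; expanding in left eigenvectors, the contributions from eigenvalues with $|\lambda_j|<1$ sum to less than $1$, and the remaining $\lambda_c$-part is a polynomial in $\lambda_c$ with bounded integer coefficients, to which the Garsia Separation Lemma applies, forcing the sum to be either $0$ or at least $2|e|$ in absolute value. No eigenfunctions are constructed. Your dual route produces more information --- explicit invariant functions $\Phi_t$ and an identified set $\{\beta\lambda_c^{-m}\}\subset\Lambda$ --- and links the question to Host-type eigenvalue criteria for substitution systems, at the cost of the convergence analysis you correctly flag (summability of $\|t\,\psi(\chi^r(b))\|$, not mere convergence, is what makes $g_t=\lim g_t^{(r)}$ exist, and the $C_b$ must be cleared to lie in $\mathbb{Z}[\lambda_c]$). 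Both proofs invoke the Pisot hypothesis in its two classical forms --- you via the trace argument $\|\alpha\lambda_c^{r}\|\to 0$, the paper via Garsia's gap estimate --- and both use the equality of algebraic and geometric multiplicity only to prevent a polynomial-in-$r$ factor in the $\lambda_c$-term of $\psi(\chi^r(b))$. The paper's argument is shorter and avoids functional-analytic machinery; yours is more constructive and situates the result in the broader eigenvalue theory of self-similar systems.
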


Theorem~\ref{thm-uniqergod-pisot} applies to uniquely ergodic rotated odometers in Examples~\ref{ex-3-1ue} and \ref{ex-3-2ue} for $q=3$, Example~\ref{ex-5-3nue} for $q=5$. In these examples, there are two eigenvalues of algebraic multiplicity $1$ outside of the unit circle, and for the largest one $\psi(\ell_0) = 0$ (this is always true for rotated odometer with ergodic Lebesgue measure). Theorem~\ref{thm-uniqergod-pisot} also applies to Example~\ref{ex-9-2ue} for $q=9$, where $(I,F_\pi,\mbox{Leb})$ is uniquely ergodic, and the second largest eigenvalue $\lambda_1$ is Pisot and has geometric and algebraic multiplicity $2$. Example~\ref{ex-5-3nue} for $q = 5$ is not uniquely ergodic, and satisfies the assumptions of the theorem. We refer the reader to Section~\ref{sec:exam} for further examples, to which Theorems~\ref{thm-1.0-non-essential}  and  \ref{thm-uniqergod-pisot} are not applicable.

\subsection{Diffusion coefficient}

The spread of particles over time, for example in a Brownian motion, is called diffusion, and if the (average or maximal) displacement of particles happens according to some power law, the exponent is called the
{\em diffusion coefficient}.
For Brownian motion, the average displacement is the square-root of time, so the diffusion coefficient
is $\gamma = \frac12$, and this is ``the standard''.
Non-standard diffusion is referred to as {\em anomalous diffusion}, and more precisely, it is called {\em super-diffusion} if the diffusion coefficient $\gamma > \frac12$
and it is called {\em sub-diffusion} if $\gamma < \frac12$.
It is known that the Lorentz gas with circular obstacles and finite horizon (i.e., a particle will always hit an obstacle in bounded time) is diffusive \cite{BS80}, while the Lorentz gas with infinite horizon is super-diffusive \cite{SV07}, but only by a thin margin, because the diffusion coefficient is $\frac12$ also here, but there is a logarithmic correction factor
that makes the diffusion faster than for Brownian motion.
Ehrenfest's wind-tree model is similar to the Lorentz gas model, except that the obstacles are now aligned rectangles (or more general polygonal shapes, see \cite{DZ2020}) situated at the lattice points of $\mathbb{Z}^2$. The diffusion depends now
on the direction of the flow. In the model with square obstacles, the diffusion coefficient for the typical direction is $\gamma = \frac{2}{3}$ \cite{DHL}, and this is found as the quotient of the largest two eigenvalues of a particular matrix. For more complicated polygonal shapes, the diffusion coefficients in the typical direction can be very different. For instance, \cite{DZ2020} considers the a family of models with a certain geometric shape of an obstacle, where the diffusion coefficient depends only on the number of the corners of the obstacle, and tends to zero as the number of corners increases. On the other hand, \cite{DZ2020} also describes a model where the diffusion coefficient is very close to $1$.

For $\mathbb{Z}$-extensions, the $n$-steps displacement is $\xi(T_\pi^n(x,k)) - \xi(x,k)$ for the projection $\xi(x,k) = k$.
Clearly, the displacement is independent of $k \in \mathbb{Z}$,
and thus the diffusion coefficient, as function of $x$, can be defined as
\begin{align}\label{eq-diff-defn}
\gamma(x) = \limsup_{n \to \infty}
\frac{ \log (\xi(T_\pi^n(x,k)) - k )}{\log n}.
\end{align}
Unless $F_\pi$ is uniquely ergodic, the dependence of $\gamma(x)$
on the point $x$ cannot be avoided, so at best we can speak of the a.s.\ diffusion coefficient
with respect to an ergodic measure (which will be Lebesgue, if $\chi$ is covering, see Definition~\ref{def-covering} below).
Our main result about $\gamma(x)$ is the following.

\begin{theorem}\label{thm-diffusion}
Let $(I,F_\pi,\mbox{Leb})$ be a stationary rotated odometer. Suppose the associated matrix is diagonalizable, the largest eigenvalue $\lambda_0$ has multiplicity $1$, and the second largest eigenvalue satisfies $|\lambda_1| >1$, then for the corresponding $\mathbb{Z}$-extension,
the diffusion coefficients satisfy 
$$
\gamma(x) \leq \gamma_0 := \frac{\log |\lambda_1|}{\log \lambda_0}
\text{ for Leb-a.e. } x.
$$
\end{theorem}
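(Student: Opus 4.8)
The plan is to reduce the statement to a uniform power-law bound on the Birkhoff sums of the skew-function along $F_\pi$-orbits. Writing $\psi^{(n)}(x) = \sum_{i=0}^{n-1}\psi(F_\pi^i(x))$, the $n$-step displacement equals $\xi(T_\pi^n(x,k)) - k = \psi^{(n)}(x)$, so that $\gamma(x) = \limsup_{n\to\infty}\frac{\log|\psi^{(n)}(x)|}{\log n}$, and it suffices to prove $|\psi^{(n)}(x)| = O(n^{\gamma_0})$ for Leb-a.e.\ $x$. First I would record how the weights of substitution words behave under iteration. Since $M$ is diagonalizable, I would fix a biorthonormal system of left and right eigenvectors $\ell_j,\boldsymbol v_j$ and expand the single-letter weight vector, which gives
\[
\psi(\chi^r(a)) = \sum_{j=0}^{q-1}\psi(\ell_j)\,\lambda_j^{\,r}\,(\boldsymbol v_j)_a .
\]
The crucial input is that $\psi(\ell_0)=0$: because $\ell_0$ is proportional to the vector of Lebesgue measures of the base intervals and $\int_I\psi\,d\mbox{Leb}=\tfrac12-\tfrac12=0$, the leading Perron--Frobenius term drops out. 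As $|\lambda_j|\le|\lambda_1|$ for every $j\ge 1$ and diagonalizability excludes nontrivial Jordan blocks, this yields a constant $C_1$ with $|\psi(\chi^r(a))|\le C_1|\lambda_1|^{\,r}$ for all $a\in\mathcal A$ and all $r$.

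The heart of the argument is to upgrade this bound on complete blocks to a bound on arbitrary orbit segments. By Proposition~\ref{lem:periodicpermutations-1}, for a.e.\ $x$ the length-$n$ itinerary of $x$ is a factor of some $\chi^r(a)$, and $\psi^{(n)}(x)$ is exactly the weight of that factor. I would bound the weight of any prefix $u$ of $\chi^r(a)$ via the nested decomposition $\chi^r(a)=\chi^{r-1}(c_1)\cdots\chi^{r-1}(c_m)$ with $m\le K:=\max_b|\chi(b)|$: a prefix consists of at most $K$ complete level-$(r-1)$ blocks followed by a prefix of one more such block, and recursing down the scales gives
\[
|\psi(u)|\le \sum_{s=1}^{r} K\,C_1\,|\lambda_1|^{\,s-1}\le C_2\,|\lambda_1|^{\,r},
\]
where the geometric sum is dominated by its last term precisely because $|\lambda_1|>1$. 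Writing a factor as a difference of two prefixes then gives $|\psi^{(n)}(x)|\le 2C_2|\lambda_1|^{\,r}$.

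Finally I would match scales. Since $\lambda_0$ is a simple eigenvalue equal to the spectral radius of the nonnegative matrix $M$, Perron--Frobenius gives $|\chi^r(b)|\asymp\lambda_0^{\,r}$ uniformly in $b$, so choosing $r$ minimal with $|\chi^r(a)|\ge n$ yields $r=\frac{\log n}{\log\lambda_0}+O(1)$. Substituting,
\[
|\psi^{(n)}(x)|\le C_3\,|\lambda_1|^{\,r}=C_3\,n^{\log|\lambda_1|/\log\lambda_0}\bigl(1+o(1)\bigr)=O\!\left(n^{\gamma_0}\right),
\]
and taking $\limsup$ of $\frac{\log|\psi^{(n)}(x)|}{\log n}$ gives $\gamma(x)\le\gamma_0$ for a.e.\ $x$. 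The main obstacle is the second paragraph: the cancellation from $\psi(\ell_0)=0$ only controls complete blocks, and one must rule out that the partial (incomplete) blocks at the many scales $s=1,\dots,r$ conspire to produce a larger displacement. The hypothesis $|\lambda_1|>1$ is exactly what makes this telescoping sum geometric rather than accumulating a factor $r\sim\log n$, which would instead push the analysis into the logarithmic or bounded regime treated separately.
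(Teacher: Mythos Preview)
Your argument is essentially the same strategy as the paper's: both use $\psi(\ell_0)=0$ to kill the Perron term, bound $|\psi(\chi^r(a))|\le C|\lambda_1|^r$, decompose an arbitrary orbit segment hierarchically into substitution blocks at descending scales, and sum geometrically (using $|\lambda_1|>1$). The paper uses a two-sided decomposition $W=B_{-s}\chi(B_{-s+1})\cdots\chi^s(B_0)\cdots\chi(B_{s-1})B_s$ with a maximal central block, whereas you use prefix-recursion and write a factor as a difference of two prefixes; the two are standard and interchangeable.

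There is, however, a genuine gap in your scale-matching step. You assert that ``Perron--Frobenius gives $|\chi^r(b)|\asymp\lambda_0^{\,r}$ uniformly in $b$'', but this requires the right Perron eigenvector $\boldsymbol v_0$ to be strictly positive, which in turn needs $M$ to be primitive. The theorem does not assume unique ergodicity, and in the non-uniquely-ergodic examples of the paper (e.g.\ $q=5$, $\pi=(01234)$ or $\pi=(02431)$) the matrix $M$ is reducible and $(\boldsymbol v_0)_a=0$ for every letter $a$ in a proper invariant subsystem; for such $a$ one has $|\chi^r(a)|\asymp|\lambda_1|^r$, not $\lambda_0^r$, and your bound collapses to the trivial $\gamma(x)\le 1$. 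The fix is to note that $\{a:(\boldsymbol v_0)_a=0\}$ is $\chi$-invariant (since $M\boldsymbol v_0=\lambda_0\boldsymbol v_0$ with all entries non-negative forces $M_{a,b}=0$ whenever $(\boldsymbol v_0)_a=0$ and $(\boldsymbol v_0)_b>0$), so $\chi^r(a)$ with $(\boldsymbol v_0)_a=0$ contains only such letters; but a Lebesgue-typical orbit segment of length $n\gg 1$ must contain letters $b$ with $(\boldsymbol v_0)_b>0$ (these have positive Lebesgue frequency), hence the enclosing $\chi^r(a)$ from Lemma~\ref{lemma-code-orbits} necessarily has $(\boldsymbol v_0)_a>0$ and $|\chi^r(a)|\asymp\lambda_0^r$. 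The paper's proof makes the analogous assertion ($c_{0,0}\ne 0$ for the central block $B_0$) with only a brief justification, so this is a point both arguments should address more carefully.
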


\subsection{Discrepancy} A sequence of points $(x_i)_{i \geq 1}$ in $I$ is called uniformly distributed if, in the limit, the average of times the sequence hits a subinterval $J$ of $I$ is equal to the length of $J$. It is well-known that, for a measure-preserving system with an ergodic invariant probability measure, the orbit of almost every point is uniformly distributed. Discrepancy is a quantitative characteristic of a sequence $(x_i)_{i \geq 1}$, which can be understood as characterizing the properties of the uniform distribution, i.e., how far it is from the ideal one. A classical reference for the results on discrepancy is the book \cite{KN}. In this paper, rigorous definitions of the concepts mentioned above are given in Section~\ref{sec:discr}.

For substitutions, the discrepancy of sequences arising as fixed points of primitive substitutions and the corresponding shifts were studied by Adamczewski \cite{Ad04}. His results \cite{Ad04} were extended to $S$-adic transformations in \cite{BD14}. Since our rotated odometers are often described by substitution matrices which are not primitive, we include the following estimate for the discrepancy of orbits of rotated odometers.

\begin{theorem}\label{thm:discr}
Suppose that $F_\pi$ is covering and stationary,
 the associated matrix is diagonalizable, and the largest eigenvalue $\lambda_0$ has multiplicity $1$.
Then for Lebesgue-a.e.\ $x$, there is $C = C_x$ such that
the $F_\pi$-orbit of $x$ has discrepancy
$\mathfrak{D}_R \leq C_x \cdot R^{\gamma_0-1}$
for $\gamma_0 := \max\left\{ \frac{\log |\lambda_1|}{\log \lambda_0}, 0 \right\}$.
\end{theorem}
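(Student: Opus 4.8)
My plan is to follow the symbolic-discrepancy method of Adamczewski \cite{Ad04} and its $S$-adic refinement \cite{BD14}, adapting it to the (possibly non-primitive) stationary matrix $M$ by replacing the Perron--Frobenius input with the spectral decomposition. Recall that $\mathfrak{D}_R$ is the supremum over subintervals $J\subseteq I$ of $\left| A(J;R)/R - |J| \right|$, where $A(J;R)=\#\{0\le i<R: F_\pi^i x\in J\}$; equivalently, writing $\varphi_J=\mathbf 1_J-|J|$, one has $R\,\mathfrak{D}_R=\sup_J\left|\sum_{i=0}^{R-1}\varphi_J(F_\pi^i x)\right|$, so the theorem is a bound on the deviation of Birkhoff sums of centred indicators. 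By Proposition~\ref{lem:periodicpermutations-1} a length-$R$ initial segment of the orbit of $x$ is coded by a subword of $\chi^r(a)$ with $r\asymp \log R/\log\lambda_0$, and under the covering hypothesis (Definition~\ref{def-covering}) the frequency with which the letter $b$ is read equals the Lebesgue length of the interval it codes. Thus the geometric discrepancy will be controlled by the symbolic balance of the words $\chi^r(a)$.

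The core estimate comes from the spectral decomposition of $M$. Since $M$ is diagonalizable I write $M^r=\sum_j \lambda_j^r P_j$ for the spectral projections $P_j$ onto the eigenspaces. The abelianized vector of $\chi^r(a)$ (the column of letter-counts) is $M^r e_a$ for the standard basis vector $e_a$; its leading term $\lambda_0^r P_0 e_a$ is a multiple of the simple Perron eigenvector and, by the previous paragraph, encodes precisely the expected counts $L\cdot|I_b|$ in a word of length $L\asymp\lambda_0^r$. Because $\lambda_0$ is simple and $|\lambda_1|$ is the next largest modulus, the remainder $\sum_{j\ge 1}\lambda_j^r P_j e_a$ has norm $O(|\lambda_1|^r)$, so every letter-count in $\chi^r(a)$ deviates from its expected value by $O(|\lambda_1|^r)$; the same bound holds for the balance along prefixes by applying this to the prefix--suffix pieces of $\chi^r(a)$.

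To reach an arbitrary length $R$ and an arbitrary interval $J$, I would run the prefix--suffix (Dumont--Thomas type) desubstitution attached to the renormalization towers: the orbit segment is a concatenation of complete level-$k$ blocks $\chi^k(\cdot)$, $0\le k\le r$, with only a bounded number of blocks per level, and $J$ is resolved level by level into the towers together with a bounded number of boundary pieces coming from the endpoints of $J$ and the finitely many discontinuities of $F_\pi$. The key point, which prevents a blow-up over the $\asymp\lambda_0^r$ atoms, is that at each level only boundedly many such special intervals contribute, each contributing a projection onto the non-Perron eigenspaces of size $O(|\lambda_1|^k)$. Summing gives a total deviation $\le C\sum_{k=0}^r|\lambda_1|^k$, which is $O(|\lambda_1|^r)=O(R^{\gamma_0})$ when $|\lambda_1|>1$ and $O(r)=O(\log R)$ when $|\lambda_1|\le 1$; in both cases this is $O(R^{\gamma_0})$ after absorbing the logarithm, the truncation $\gamma_0=\max\{\log|\lambda_1|/\log\lambda_0,0\}$ accounting for the subdominant regime. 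Dividing by $R$ and taking the supremum over $J$ (legitimate since the per-level count of special intervals is bounded uniformly in $J$) yields $\mathfrak{D}_R\le C_x R^{\gamma_0-1}$. The restriction to Lebesgue-a.e.\ $x$, and the point-dependent constant $C_x$, enter because the coding and the tower desubstitution are available with these uniform constants only for the Lebesgue-generic points, which suffices as Lebesgue is ergodic for $F_\pi$ by Assumption~\ref{assum-stationarycovering}.

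The step I expect to be the main obstacle is the uniform symbolic-to-geometric reduction in the non-primitive, not-uniquely-ergodic setting: I must verify that the non-Perron projections $P_j$, $j\ge 1$, never leak into the leading Lebesgue frequencies (so that $\lambda_1$ is genuinely the scale of the fluctuations), and that only boundedly many special intervals contribute at each renormalization level, uniformly over all $J$. Handling the boundary case $|\lambda_1|\le 1$ also needs care, since there the logarithmic number of levels and the boundary corrections prevent a rate better than $R^{-1}$ and force the truncation in $\gamma_0$; here I would check that the $O(\log R)$ bound on the total deviation is indeed absorbed into $C_x$ without degrading the stated exponent.
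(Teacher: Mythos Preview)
Your plan is essentially the paper's own argument: spectral decomposition of $M$ to show that letter counts in $\chi^r(a)$ deviate from their Lebesgue frequencies by $O(|\lambda_1|^r)$, followed by a prefix--suffix block decomposition of an arbitrary length-$R$ orbit segment into $O(1)$ blocks $\chi^{k}(B_i)$ per level, and a geometric sum. The paper carries this out concretely by first treating a single $q2^{kN}$-adic target interval $G$ and an orbit of length $|\chi^{k+n}(a)|$, then extending to arbitrary $R$ via the same block decomposition you describe, and finally to arbitrary $J$ by writing $J$ as a (countable) disjoint union of $q2^{kN}$-adic intervals and summing the per-interval bounds, which conveniently come with a factor $|G|$ so that the sum telescopes to $|J|\le 1$. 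Your alternative of tracking ``boundary pieces'' of $J$ level by level would also work, but the dyadic decomposition is cleaner here because the covering hypothesis guarantees that each $q2^{kN}$-adic interval is hit exactly once per return to $L_k$, making the per-interval estimate immediate.

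There is one genuine soft spot you flag yourself and do not resolve: when $|\lambda_1|\le 1$ (so $\gamma_0=0$) you obtain a total deviation of order $\sum_{k=0}^r |\lambda_1|^k$, which is $O(r)=O(\log R)$, and you then claim this is ``$O(R^{\gamma_0})$ after absorbing the logarithm''. That absorption is illegitimate, since $R^{\gamma_0}=1$; you would only get $\mathfrak{D}_R\le C_x(\log R)/R$, not $C_x R^{-1}$. The paper's writeup has the same blind spot: its summed bound carries a factor $1/(1-\beta^{\gamma_0})$ which is undefined at $\gamma_0=0$. So your sketch matches the paper both in method and in this lacuna; if you want the stated exponent in the regime $|\lambda_1|\le 1$ you will need an additional argument (or accept a logarithmic correction there).
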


 The rest of the paper is organized as follows. Section \ref{sec-surfaces} briefly explains the relation between rotated odometer and their skew-product extensions, and the first return maps of flows on translation surfaces.
 Section~\ref{sec:rotated} recalls preliminaries, mostly from \cite{BL23-JLMS} but also about essential values from \cite{S77}.
 Section~\ref{sec:discr} computes the discrepancy of the rotated odometers, as well as the diffusion coefficients of the skew-products.
 Recurrence and (non-)ergodicity of the skew-products are covered in Section~\ref{sec:non-erg}.
 In Section~\ref{sec:exam}, we list the examples that illustrate our theorems.
 
  \section{Skew-products of rotated odometers as first return maps}\label{sec-surfaces}
 
 In this section we briefly describe how rotated odometers and their skew-products can be realized as first return maps of flows on translation surfaces.
 
 \subsection{Flows on a finite area translation surface}\label{sec-lochness}
 To construct a translation surface of finite area and infinite genus, we identify a pair of the opposite edges of the unit square (minus a countable collection of points) using the von Neumann-Kakutani map, and another pair by an IET of a finite number of intervals. This construction is similar to the one of the Chamanara, or the baker's surface, see for instance \cite{Rbook}, except in the baker's surface the second pair of edges is also identified via the von Neumann-Kakutani map. Due to this second identification, the baker's surface has more symmetries, which allows, for instance, to compute Veech group for this surface \cite{DHV-book}. The surface $L$ constructed below lacks such symmetries.
 
 More precisely, let $\tau: I \to I$ be an exchange of a finite number of intervals of equal length. Denote by $D_a$ and $D_\tau$ the sets of discontinuities of $\mathfrak{a}$ and $\tau$ respectively, and let 
  \begin{align}S_{\mathfrak{a}} = \{(x,1), (\mathfrak{a}(x),0) \mid x \in D_{\mathfrak{a}} \cup \{0,1\}\}, \\
  S_\tau = \{(0, y), (1, \tau(y)) \mid y \in D_\tau \cup \{0,1\}\}.
  \end{align}

Define an equivalence relation on $[0,1] \times [0,1]$ by
  \begin{align}\label{eq-simrel}(x,y) \sim (x',y') \textrm{ if and only if } y=1, y'=0, x'= \mathfrak{a}(x) \textrm{ or }x=0, x'= 1, y'= \tau(y),\end{align}
see Figure \ref{fig:surfaces}, left. Let 
  \begin{align}\label{eq-nocorners}L'  =  \left( [0,1] \times [0,1] \right)\ \setminus \ \left( S_a \cup S_\tau\right), \end{align}
  then the quotient surface $L = L'/\sim$ is a smooth non-compact surface.

\begin{figure}[ht]
\begin{center}
\begin{minipage}{0.4\textwidth}
\begin{tikzpicture}[scale=0.6]
\draw[-, draw=black] (0,8)--(7.6,8)node[pos=0.25,anchor=north]{$I_1$} 
node[pos=0.66,anchor=north]{$I_2$} node[pos=0.85,anchor=north]{$I_3$} 
node[pos=0.95,anchor=north]{$I_4$} node[pos=0,anchor=east]{$1$}; 
\draw[dotted, draw=black] (7.6,8)--(8,8); 
\draw[-, draw=black] (0,8)--(0,0)node[pos=0.5,anchor=east]{$J_1$}; 
\draw[-, draw=black] (8,0)--(8,8)node[pos=0.5,anchor=west]{$\tau(J_1)$}; 
\draw[-, draw=black] (0.4,0)--(8,0) node[pos=0.75,anchor=south]{$\mathfrak{a}(I_1)$} 
node[pos=0.36,anchor=north]{$\mathfrak{a}(I_2)$} node[pos=0.14,anchor=north]{$\mathfrak{a}(I_3)$} 
node[pos=0.01,anchor=south]{$\mathfrak{a}(I_4)$}node[pos=-0.04,anchor=east]{$0$} node[pos=1,anchor=west]{$1$}; 
\draw[dotted, draw=black] (0,0)--(0.4,0); 
\draw[-,draw=black] (4,-0.1)--(4,0.1);
\draw[-,draw=black] (2,-0.1)--(2,0.1);
\draw[-,draw=black] (1,-0.1)--(1,0.1);
\draw[-,draw=black] (0.5,-0.1)--(0.5,0.1);
\draw[-,draw=black] (4,7.9)--(4,8.1);
\draw[-,draw=black] (6,7.9)--(6,8.1);
\draw[-,draw=black] (7,7.9)--(7,8.1);
\draw[-,draw=black] (7.5,7.9)--(7.5,8.1);
\draw[-,thick,dashed,draw=blue] (2.1,0)--(4.8,8);
\draw[-,thick,dashed,draw=blue] (2.1,0.1)--(4.8,0.1) node[pos=0.5,anchor=south]{\tiny $+\frac{p}{q}$};
\draw[-,thick,dashed,draw=blue] (4.8,0)--(4.8,8);
\end{tikzpicture}
\label{fig:lochness}
\end{minipage}
\begin{minipage}{0.1\textwidth}
\end{minipage}
\qquad \qquad \quad
\begin{minipage}{0.3\textwidth}
\begin{tikzpicture}[scale=0.19]
\draw[-,draw=green] (12,24)--(12,32)--(20,32)--(20,24)--(12,24);
\draw[-,draw=green] (11.5,26.5)--(12.5,28.5); \draw[-,draw=green] (19.5,26.5)--(20.5,28.5);
\draw[-,draw=green] (16.5,31.5)--(17.5,32.5); \draw[-,draw=green] (18.5,15.5)--(19.5,16.5);
\draw[-,draw=green] (12,32) circle (0.15);
\draw[-,draw=green] (16,32) circle (0.15);
\draw[-,draw=green] (18,32) circle (0.15);
\draw[-,draw=green] (19,32) circle (0.15);
\draw[-,draw=green] (19.5,32) circle (0.15);
\draw[-,draw=green] (20,32) circle (0.15);
  \draw[-,draw=green] (20,16) circle (0.15);
\draw[-,draw=green] (16,16) circle (0.15);
\draw[-,draw=green] (18,16) circle (0.15);
\draw[-,draw=green] (17,16) circle (0.15);
\draw[-,draw=green] (16.5,16) circle (0.15);
\draw[-,draw=green] (19,16) circle (0.15);
\draw[-,draw=blue] (16,16)--(16,24)--(24,24)--(24,16)--(16,16);
\draw[-,draw=blue] (15.5,18.5)--(16.5,20.5); \draw[-,draw=blue] (23.5,18.5)--(24.5,20.5);
\draw[-,draw=blue] (20.5,23.5)--(21.5,24.5); \draw[-,draw=blue] (22.5,7.5)--(23.5,8.5);
\draw[-,draw=blue] (16,24) circle (0.15);
\draw[-,draw=blue] (20,24) circle (0.15);
\draw[-,draw=blue] (22,24) circle (0.15);
\draw[-,draw=blue] (23,24) circle (0.15);
\draw[-,draw=blue] (23.5,24) circle (0.15);
\draw[-,draw=blue] (24,24) circle (0.15);
  \draw[-,draw=blue] (24,8) circle (0.15);
\draw[-,draw=blue] (20,8) circle (0.15);
\draw[-,draw=blue] (22,8) circle (0.15);
\draw[-,draw=blue] (21,8) circle (0.15);
\draw[-,draw=blue] (20.5,8) circle (0.15);
\draw[-,draw=blue] (23,8) circle (0.15);
\draw[-,draw=black] (20,8)--(20,16)--(28,16)--(28,8)--(20,8);
\end{tikzpicture}
\end{minipage}
\caption{\footnotesize Left: The unit square with identifications given by \eqref{eq-simrel} for $\tau = {\rm id}$. Dashed lines represent a straight-line flows line at an angle $\theta = \tan^{-1}\frac{q}{p}$ and its decomposition as the horizontal translation by $\frac{p}{q}$ and the vertical translation by $1$. 
Right: Three steps of the staircase with identifications.}
\label{fig:surfaces}
\end{center}
\end{figure} 

Topologically, surfaces are classified by \emph{genus} and the \emph{number of ends}. Intuitively, the genus is the number of handles from which the surface can be assembled, and the number of ends is the number of ways to go to infinity in the surface, see \cite{Rbook,DHV-book} for precise definitions. Intuitively, an end is planar if it is possible to cut off the part of the surface containing the end in such a way that the cut-off portion has genus zero, and an end is non-planar otherwise. In translation surfaces with compact metric completion, planar ends correspond to cone-angle singularities, while non-planar ends correspond to wild singularities, see \cite{Rbook}. 

\begin{theorem}\cite[Theorem 1.1]{BL23-JLMS}\label{thm-lochness}
The surface $L$ constructed above has finite area, infinite genus, precisely one non-planar end, and at most a finite number of planar ends.
\end{theorem}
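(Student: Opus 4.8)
The plan is to treat the four assertions in order of increasing difficulty: finite area is immediate, the metric completion is compact, and the two kinds of ends together with the genus are then read off from the singularities of $\overline{L}$. First, finite area. Since $L=L'/\!\sim$ with $L'\subset[0,1]^2$ of full Lebesgue measure, and both gluing maps $\mathfrak{a}$ and $\tau$ are piecewise translations, the flat metric on the open square descends to $L$ and gives it total area $1$; in particular $L$ has finite area and bounded diameter, so $\overline{L}$ is compact. This lets me invoke the correspondence recalled in the excerpt, valid for compact metric completions, between cone-angle singularities and planar ends on the one hand, and wild singularities and non-planar ends on the other.

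Next I would classify the singular points of $\overline{L}$, i.e.\ the points of $\overline{L}\setminus L$. These all arise from the removed set $S_{\mathfrak{a}}\cup S_\tau$, which is the $\sim$-orbit of the four corners together with the discontinuities $D_{\mathfrak{a}}=\{1-2^{-n}:n\ge 1\}$ of $\mathfrak{a}$ (accumulating only at the corner $x=1$) and the finitely many discontinuities $D_\tau$ of $\tau$. Around each such point the surface is a union of Euclidean sectors glued edge-to-edge by the identifications, and the total cone angle is obtained by following this chain of sectors all the way around. The key step is to show that the sectors attached to the tail $\{1-2^{-n}\}_{n\gg 1}$, chained together through both the $\mathfrak{a}$-gluing of the horizontal edges and the $\tau$-gluing of the vertical edges near the corner, all lie in a single $\sim$-class whose total angle is infinite. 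This produces exactly one wild singularity, hence exactly one non-planar end. Every remaining singular point — those fed by the finitely many discontinuities in $D_\tau$ together with the finitely many corner sectors not absorbed into the wild point — is fed by only finitely many sectors, so it has finite cone angle $2\pi k$ and is an ordinary cone singularity; consequently there are at most finitely many planar ends.

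Finally, infinite genus follows from the non-planar end: by the definition recalled in the excerpt, the end at the wild singularity cannot be cut off by a genus-zero neighborhood, so every neighborhood of it carries a handle, and a nested sequence of such neighborhoods shrinking to the singularity yields infinitely many pairwise disjoint handles, whence the genus is infinite. As an independent check one can exhibit the handles directly: because $\mathfrak{a}$ reshuffles the intervals $I_n$ to $\mathfrak{a}(I_n)$ in an interleaving, non-rigid pattern, for each $n$ one may choose a loop crossing the horizontal gluing over $I_n$ and a loop crossing the vertical gluing meeting it once, and these pairs can be made pairwise disjoint, producing a family that is independent under the intersection pairing in $H_1(L)$. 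I expect the genuine obstacle to be the cone-angle bookkeeping at the accumulating corner: one must verify that the infinitely many shrinking sectors really do chain into a \emph{single} end of infinite angle, rather than splitting into several accumulation points or closing up prematurely, and this is precisely where the specific combinatorics of the von Neumann--Kakutani map $\mathfrak{a}$ enters.
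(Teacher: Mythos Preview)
The paper does not prove this theorem; it is quoted verbatim as \cite[Theorem 1.1]{BL23-JLMS} and the reader is referred to that paper for the argument, with only the remark that the number of planar ends depends on $\tau$. So there is no in-paper proof against which to compare your proposal.

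That said, your outline follows the expected route and is broadly sound: finite area is immediate, and analysing the ends via the singularities of the metric completion (cone-angle versus wild) is exactly the framework the paper points to through \cite{Rbook}. You have also correctly located the genuine content of the argument --- the combinatorial check that the infinitely many sectors attached to the accumulating discontinuities $1-2^{-n}$ of $\mathfrak{a}$ chain into a \emph{single} equivalence class of infinite total angle rather than several. Two small points to tighten. First, ``bounded diameter'' does not by itself force a compact completion; here compactness follows because $L$ is a quotient of a full-measure subset of the compact square $[0,1]^2$ by isometric edge identifications, so $\overline{L}$ is a quotient of $[0,1]^2$. Second, your deduction of infinite genus from the existence of a non-planar end needs one extra sentence: from a nested sequence $U_1\supset U_2\supset\cdots$ of neighbourhoods of the wild singularity, each of positive genus, you must choose $U_{k+1}$ small enough to be disjoint from a compact set carrying a handle already found in $U_k$, so that the handles produced are pairwise disjoint. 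With that made explicit, the argument is complete in outline; filling in the sector bookkeeping at the corner is, as you say, where the specific combinatorics of $\mathfrak{a}$ and $\tau$ enter, and that is what the cited paper carries out.
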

The number of planar ends depends on the finite IET $\tau$, see \cite{BL23-JLMS} for details.

The rotated odometers defined in the introduction can be represented as first return maps of parallel flows of rational slope on the surface $L$ as follows. Let $P$ be the image of $(0,1)$ in $L$ after taking the quotient by $\sim$ and removing the points of discontinuity. Then $P$ is a Poincar\'e section for a parallel flow of rational slope $\frac{p}{q}$, $p,q \in \mathbb{N}$, with the first return map given by the composition $F = \mathfrak{a} \circ R: P \to P$, where $R$ is a finite interval exchange transformation of $q$ intervals of equal length, depending on the slope of the flow-lines and the transformation $\tau$. The latter, with the measure on $P$ induced from Lebesgue measure, is measure-theoretically isomorphic to $(I,F,\mbox{Leb})$.

\subsection{The staircase} \label{sec-staircase}

The skew-product systems over rotated odometers can be realized as first return maps of parallel flows of a $\mathbb{Z}$-cover of the finite area surface $L$ in Section \ref{sec-lochness}, called a \emph{staircase}. 

Take the product $\mathcal{L}' = L' \times \mathbb{Z}$, where $L'$ is defined by \eqref{eq-nocorners}. Define an equivalence relation $\sim_{st}$ on $\mathcal{L}'$ by
$$(x,y,n) \sim_{st} (x',y',n') \textrm{ if and only if }\left \{ \begin{array}{ll} \textrm{either} & y =1, y'= 0, x'= \mathfrak{a}(x), n'= n+ \psi(x), \\ \textrm{or} & x=0,x'=1,y'= \tau(y), n= n'. \end{array} \right.$$
Then set $\mathcal{L} = \mathcal{L}'/ \sim_{st}.$  Geometrically we can represent $\mathcal{L}$ as an infinite collection of squares in Figure \ref{fig:surfaces}, right (there $\tau = {\rm  id}$), stacked on top of each other with a shift by $\frac{1}{2}$, and such that the right half of the upper edge of $(L',n)$ is identified with the left half of the lower edge of $(L',n-1)$ using the von Neumann-Kakutani map, see Figure~\ref{fig:surfaces}, right. Here values of $\mathbb{Z}$ are increasing in the upward direction. 

Clearly there is an infinite-to-one projection
 \begin{align} {\rm pr}_1: \mathcal{L}' = L' \times \mathbb{Z} \to L', \quad (x, y, n) \mapsto (x,y), \end{align}
and the equivalence relation $\sim_{st}$ descends under  the projection to the equivalence relation $\sim$ in \eqref{eq-simrel}. Thus there is an induced infinite-to-one projection $ {\rm pr}_1: \mathcal{L} \to L $. Using the standard methods as in the proof of \cite[Theorem 1.1]{BL23-JLMS}, or in \cite{Rbook,DHV-book}, one can show that the staircase $\mathcal L$ is a translation surface of infinite area of infinite genus. For a parallel flow $f^t:L \to L$ of rational slope with first return map $F_\pi$ to the Poincar\'e section $P$ as in Section \ref{sec-lochness}, the lift of $P$ along the map ${\rm pr}_1$ is the set $\mathcal{P} = P \times \mathbb{Z}/ \sim_{st}$, and the lift $\widehat f^t$ of $f^t$ is the straight-line parallel flow on $\mathcal L$ of the same slope as $f^t$. The first return map of $\widehat f^t$ to $\mathcal{P}$ is measure-theoretically isomorphic to the skew-product $T_\pi: I \times \mathbb Z \to I \times \mathbb Z$\eqref{eq-skewprod}, with respect to the measure $\mbox{Leb} \times \nu$, where $\nu$ is the counting measure on $\mathbb Z$.

\begin{remark}
Staircases (in other words, $\mathbb Z$-covers) over compact translation surfaces with finite number of punctures were studied in \cite{HW}, which considered the topology of such staircases, the recurrence of flows in terms of their homology, and gave a sufficient condition on the Veech group of the staircase to have certain properties. The family of staircases considered in our paper falls within the family considered in \cite{HW}. However, the Veech group of the surface $L$ in Section \ref{sec-lochness} is not known, and consequently we cannot say anything about the Veech group of the staircase $\mathcal{L}$ described in this section.
\end{remark}

 \section{Preliminaries of rotated odometers and essential values}\label{sec:rotated}
 We recall some results of \cite{BL23, BL23-JLMS} on the properties of rotated odometers, and techniques to study them.
 Namely, \cite[Theorem 1.2]{BL23-JLMS} proves that a rotated odometer $(I,F_\pi)$ as in Definition~\ref{defn-rotated} has the following properties:
\begin{enumerate}
\item There exists a decomposition $I = I_{per} \cup I_{np}$, with $I_{per}$ possibly empty, such that every point in $I_{per}$ is periodic under $F_\pi$, $0 \in I_{np}$, and the restriction $F_\pi: I_{np} \to I_{np}$ is invertible at every point except $0$, and has no periodic points.
\item There is a unique minimal subsystem $(I_{min},F_\pi)$ of $(I_{np},F_\pi)$.
\item If $I_{per}$ is non-empty, then it is a finite or a countable union of half-open subintervals of $I$.
\item The minimal subsystem $(I_{min},F_\pi)$ is uniquely ergodic.
\item The aperiodic subsystem $(I_{np},F_\pi)$ is ergodic, with at most $q$ invariant ergodic probability measures.
\end{enumerate}

\subsection{Substitutions}\label{subsec:subst}
To study the dynamics of $(I,F_\pi)$ further, we code the orbits of points in $(I,F_\pi)$ by words in the alphabet $\mathcal{A} = \{0,\ldots,q-1\}$.
Let $\mathcal{A}^*$ be the set of all non-empty words of finite length
in this alphabet, and let $\Sigma = \mathcal{A}^{\mathbb{N}_0}$ be the set of one-sided infinite sequences in this alphabet.

\begin{definition}\label{def:substitution}
A {\em substitution} $\chi: \mathcal{A} \to \mathcal{A}^*$ is a map that assigns to every $a \in \mathcal{A}$ a single word
$\chi(a) \in \mathcal{A}^*$. It
extends to $\mathcal{A}^*$ and $\Sigma$ by concatenation:
$$
\chi(a_1a_2\dots a_r) = \chi(a_1)\chi(a_2)\dots \chi(a_r), \, r \geq 1.
$$
The $q\times q$ matrix $M$, where the $(i,j)$-th entry is the number of letters $j$
in $\chi(i)$, is called the \emph{associated matrix} of $\chi$.
\end{definition}

\begin{definition}\label{def:primproper}
A substitution $\chi: \mathcal{A} \to \mathcal{A}^*$ is:
\begin{itemize}
\item {\em primitive}, if there is $r \geq 1$ such that for all $i\in \mathcal{A}$,
$\chi^r(i)$ contains every letter in $\mathcal{A}$.
\item {\em proper} if there exist two letters $a,b \in \{ 0, \dots, q-1\}$
such that for all $i \in \{ 0,\dots, q-1\}$, the first letter of $\chi(i)$ is $a$
and the last letter of $\chi(i)$ is $b$. 
\end{itemize}
\end{definition}

The substitutions we consider are  always \emph{proper} and sometimes also \emph{primitive}.
Recall that a square matrix $M$ is \emph{primitive} if it has a power with strictly positive entries. Thus a substitution $\chi$ is primitive as in Definition~\ref{def:primproper} if and only if its associated matrix $M$ is primitive.

If $\chi(a)$ starts with $a$, we get a fixed point of $\chi$ which (unless $\chi(a) = a$) is an infinite sequence
\begin{align}\label{eq-rho}
\rho = \rho_0\rho_1\rho_2\rho_3 \dots = \lim_{j \to \infty} \chi^j(a) \in \Sigma.
\end{align}
For sequences $s = (s_k) \in \Sigma$, define the \emph{left shift} by
\begin{equation}\label{eq-shiftmap}
\sigma: \Sigma \to \Sigma, \qquad  s_0s_1 s_2 \ldots \mapsto s_1s_2 s_3 \ldots.
\end{equation} 
The dynamical system $(X_\rho,\sigma)$
where $X_\rho = \overline{\{\sigma^n(\rho) : n \geq 0\}}$ is called a \emph{substitution subshift}.

\subsection{Renormalization and symbolic dynamics}\label{sec:renorm}

We now briefly describe the renormalization procedure developed in \cite{BL23-JLMS}, which allows us to associate to the rotated odometer $(I,F_\pi)$ a substitution dynamical system. For that we restrict to the aperiodic subsystem $(I_{np},F_\pi)$ of the rotated odometer $(I,F_\pi)$.

Let $q \geq 2$ and let $F_\pi = \mathfrak{a} \circ R_\pi$ be a rotated odometer as in Definition~\ref{defn-rotated}. Let 
 \begin{equation}\label{eq-defnN}
N = \min\{ n \geq 1 : 2^n \geq q\}
\quad \text{ and } \quad 
L_k = [0,2^{-kN}). 
\end{equation}
Denote by $\mathcal{P}_{0,q}$ the partition of $I = [0,1)$ into $q$ equal length intervals, which are the intervals of continuity for $R_\pi$ in the definition of $F_\pi$. The half-open interval $L_k$ is called the \emph{$k$-th section}.

For each $q \geq 1$ and each $k \geq 1$, we define a partition $\mathcal{P}_{kN,q}$ of the unit interval into $q2^N$ intervals of equal length $q^{-1}2^{-N}$. This partition refines $\mathcal P_{mN,q}$, $0 \leq m \leq k-1$, and induces the partition $\mathcal{P}_{kN,q}^{cod}$ of the $k$-section $L_k$ into $q$ subintervals $L_{k,i}$, $0 \leq i \leq q-1$. We recall the following result.

\begin{prop}\cite[Proposition 5.11, Lemma 5.15]{BL23-JLMS}\label{lem:periodicpermutations-1}
Let $F_{\pi,k}: L_k \to L_k$, $k \geq 1$ be the first return maps of the rotated odometer $F_\pi: I \to I$.

Then for $k \geq 1$ there exist permutations $\pi_k$ of $q$ symbols, and finite IET $R_{\pi,k}:L_k \to L_k$ of the partitions $\mathcal{P}_{kN,q}^{cod}$, defined by $\pi_k$, such that 

\begin{enumerate}
\item \label{it1-nn} $F_{\pi,k} = \mathfrak{a}_k \circ R_{\pi,k}$, where $\mathfrak{a}_k$ is a scaled copy of the von Neumann-Kakutani map $\mathfrak{a}$, given by
$$
\mathfrak{a}_k(x) = \frac{1}{2^{kN}}\, \mathfrak{a} \left(2^{kN} x \right).
$$
\item \label{it2-nn} The sequence $(\pi_k)_{k \geq 1}$ is pre-periodic.
\item If $(\pi_k)_{k \geq 1}$ is strictly pre-periodic, and $S = \{\pi_1,\ldots, \pi_{k_0-1}\}$ is the pre-periodic part of the sequence, then none of the permutations in $S$ occurs in the periodic part.
\item For each set $L_{k,i}$, $0 \leq i \leq q-1$ of the coding partition $\mathcal P_{kN,q}$, the order in which $L_{k,i}$ visits the sets of $\mathcal P_{(k-1)N,q}$ under the first return map $F_{\pi,k}: L_{k-1} \to L_{k-1}$ defines a substitution word $\chi_k(i)$ with the alphabet $\mathcal{A} = \{0,\ldots, q-1\}$. Thus associated to each section $L_k$, $k \geq 1$, there is a substitution $\chi_k: \mathcal{A} \to \mathcal{A}^*$.
\item  Every substitution in the sequence $(\chi_k)_{k \geq 1}$ is proper, namely, every word $\chi_k(i)$ starts with $0$, and ends 
 with $b_k$, depending only on $k$ and not on $i \in \{0,\ldots,q-1\}$.
\end{enumerate}
\end{prop}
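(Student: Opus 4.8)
The plan is to prove the proposition by \emph{renormalization}, exploiting the self-similarity of the von Neumann--Kakutani map $\mathfrak{a}$ and reducing everything to a single ``first return to the top section'' construction that is then iterated. The key tool is the standard renormalization identity for the odometer: the first return map of $\mathfrak{a}$ to a dyadic section $[0,2^{-m})$ is the rescaled map $x\mapsto 2^{-m}\mathfrak{a}(2^{m}x)$. In the binary/odometer coding this is immediate, since advancing the first $m$ digits cycles through all $2^{m}$ values and returns to the all-zero block exactly when the carry passes coordinate $m$, inducing the odometer on the tail; this already gives the identification $\mathfrak{a}_k(x)=2^{-kN}\mathfrak{a}(2^{kN}x)$ of item~(1) for $\mathfrak{a}$ alone.

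The heart of the argument is the base case $k=1$: I would follow the $F_\pi$-orbit of a point of $L_1=[0,2^{-N})$ until its first return to $L_1$, and show that the return map factors as $\mathfrak{a}_1\circ R_{\pi,1}$, where $R_{\pi,1}$ is a finite IET of the coding partition $\mathcal{P}^{cod}_{N,q}$ of $L_1$ determined by a permutation $\pi_1$ of $q$ symbols. The delicate feature is that $R_\pi$ permutes the \emph{non-dyadic} intervals $I_j=[j/q,(j+1)/q)$, whereas $\mathfrak{a}$ renormalizes on \emph{dyadic} sections; the choice $N=\min\{n:2^n\ge q\}$ is exactly what makes the two scales compatible. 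I would track how the odometer ``carry'' interacts with the permutation of the $q$ intervals during one excursion out of $L_1$ and back, and verify that the accumulated effect splits as a rescaled odometer composed with a permutation of the $q$ subintervals of $L_1$; this produces $\pi_1=\Phi(\pi)$ for a well-defined self-map $\Phi$ of the symmetric group on $q$ symbols. \textbf{This step is where I expect the real work to lie}, since it requires careful interval bookkeeping rather than a soft argument.

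Once the base case is in hand, item~(1) for general $k$ follows by induction: after rescaling by $2^{(k-1)N}$, the map $F_{\pi,k-1}=\mathfrak{a}_{k-1}\circ R_{\pi,k-1}$ is again a rotated odometer with permutation $\pi_{k-1}$, and $L_k$ plays the role of its ``$L_1$-section'', so applying the base case gives $F_{\pi,k}=\mathfrak{a}_k\circ R_{\pi,k}$ with $\pi_k=\Phi(\pi_{k-1})$. Thus the whole sequence $(\pi_k)_{k\ge 1}$ is the forward orbit of $\pi_1$ under the single map $\Phi$ on the finite set $\mathrm{Sym}(q)$. Item~(2), pre-periodicity, is then the pigeonhole principle for a deterministic map on a finite set. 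Item~(3) is also immediate from determinism: if a permutation occurred both in the strictly pre-periodic part $S$ and in the periodic part, its two occurrences would have identical $\Phi$-futures, forcing the ``pre-periodic'' occurrence to lie on the cycle, contradicting strict pre-periodicity.

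Finally, for items~(4) and~(5) I would define $\chi_k(i)$ as the itinerary, relative to the $q$ coding pieces $L_{k-1,0},\dots,L_{k-1,q-1}$ of $L_{k-1}$, of the $F_{\pi,k-1}$-orbit of a point of $L_{k,i}$ run until its first return to $L_k$ (this first return coincides with that of $F_\pi$, since $L_k\subset L_{k-1}$), which gives~(4). For properness, note that $2^N\ge q$ yields $2^{-kN}\le 2^{-(k-1)N}/q$, hence $L_k\subset L_{k-1,0}$, so every itinerary begins in the $0$-piece and each $\chi_k(i)$ starts with $0$. For the terminal letter, the rescaled odometer $\mathfrak{a}_{k-1}$ re-enters the sub-section $L_k$ through a single ``gate'' $\mathfrak{a}_{k-1}^{-1}(L_k)$; pulling this gate back through the bijection $R_{\pi,k-1}$ shows that at the step just before returning the orbit lies in one fixed coding piece $L_{k-1,b_k}$, with $b_k$ depending only on $k$ (through $\pi_{k-1}$) and not on $i$. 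This supplies the common terminal letter $b_k$ and completes properness.
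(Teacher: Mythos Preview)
The present paper does not prove this proposition; it is quoted from \cite{BL23-JLMS} (Proposition~5.11 and Lemma~5.15 there), so there is no in-paper proof to compare against. Your outline is the correct and natural one, and it matches the strategy of the cited source: a direct base-case computation showing that the first return of $F_\pi$ to $L_1$ is again a rotated odometer, producing a renormalization operator $\Phi:\mathrm{Sym}(q)\to\mathrm{Sym}(q)$ with $\pi_k=\Phi(\pi_{k-1})$; items~(2)--(3) then drop out of the finiteness and determinism of $\Phi$, and item~(5) from the containments $L_k\subset L_{k-1,0}$ and $\mathfrak{a}_{k-1}^{-1}(L_k)\subset L_{k-1,q-1}$, the latter giving $b_k=\pi_{k-1}^{-1}(q-1)$.

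One point worth making explicit: items~(4) and~(5) presuppose that the itinerary is \emph{constant} on each $L_{k,i}$, in particular that the first return time to $L_k$ is the same for every point of $L_{k,i}$. This does not follow from the form $F_{\pi,k}=\mathfrak{a}_k\circ R_{\pi,k}$ of item~(1) alone, since that map is still an infinite IET on $L_k$. It is rather part of what the base-case bookkeeping must deliver: the image of $L_{k,i}$ under successive iterates of $F_{\pi,k-1}$ moves rigidly (never straddles a relevant discontinuity) until it reaches the gate. You rightly flagged the base case as the locus of real work; just make sure this rigidity statement is among its conclusions, since your terminal-letter argument tacitly uses it.
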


Thus for each $i \in \mathcal{P}_{kN,q}$ the word $\chi_k(i)$ is the itinerary of the half-open interval $L_{k,i}$ in $L_{k-1}$, with respect to the partition $\mathcal{P}_{(k-1)N,q}$ of $L_{k-1}$, and, inductively, $\chi_1 \circ \chi_2 \circ \cdots \circ \chi_k(i)$ is the itinerary of $L_{k,i}$ in $I = [0,1)$ with respect to the partition $\mathcal{P}_{0,q}$.

We now can proceed similarly to the case of a single substitution in Section~\ref{subsec:subst}.
By Proposition~\ref{lem:periodicpermutations-1}, item (5), the sequence
$$
\rho = \lim_{k \to \infty} \chi_1 \circ \cdots \circ \chi_k(0)
$$
is well-defined and it is an $S$-adic fixed point of $(\chi_k)_{k \geq 1}$. The sequence $\rho$ is also the itinerary of $0$ in $I$ with respect to the partition $\mathcal P_{0,q}$.

We define the  {\em $S$-adic subshift} $(X_\rho,\sigma)$ similarly to Section~\ref{subsec:subst}, formula \eqref{eq-shiftmap}, and the paragraph below \eqref{eq-shiftmap}. Recall  that the rotated odometer $(I,F_\pi)$ has a unique minimal subsystem $(I_{min},F_\pi)$, and the orbit of $0$ under $F_\pi$ lies dense in $I_{min}$. Thus $(X_\rho, \sigma)$ models the dynamics on the minimal set $(I_{min},F_\pi)$. More precisely, there is a homeomorphism onto its image $h: I_{min} \to X_\rho$, such that $X_\rho \setminus h(X_\rho)$ is countable, and $h \circ F_\pi(x) = \sigma \circ h (x)$.

In many examples of rotated odometers $I_{min}$ is a proper subset of the aperiodic subsystem $I_{np}$, so we cannot restrict to considering only $(X_\rho,\sigma)$. Proposition~\ref{lem:periodicpermutations-1} allows us to code the orbits outside the minimal set as well. Before we state the result which we will use, we have to introduce \emph{covering} substitutions.

\begin{definition}\label{def-covering}
Let $F_\pi: I \to I$ be a rotated odometer as in Definition~\ref{defn-rotated}, and let $F_{\pi,k}: L_k \to L_k$ be the sequence of first return maps as in Proposition~\ref{lem:periodicpermutations-1}. Then $F_{\pi,k}$ is {\em covering} if the orbits of the sets in $\mathcal{P}_{kN,q}^{cod}$
visit every element of $\mathcal{P}_{kN,q}$ in $L_{k-1}$, that is,  if
\begin{equation}\label{eq:covering} 
\bigcup_{i=0}^{q-1} \bigcup_{t=0}^{t_{k,i}-1} F^t_{\pi,k-1}(I_{k,i}) = L_{k-1}. 
\end{equation}
Alternatively, $F_{\pi,k}$ (or the associated substitution $\chi_k$) is \emph{covering} 
if $\sum_{i \in \mathcal{A}} |\chi_k(i)| = q2^N$. 

\end{definition}

The importance of {\em covering} substitutions
is underlined by the following result from \cite{BL23-JLMS}:

\begin{prop}\label{prop:LebErg}
 Lebesgue measure is ergodic (and hence $F_\pi$ is aperiodic on $I$) if and only if
 all the substitutions $\chi_k$ (or just $\chi$ if it is stationary) are covering.
\end{prop}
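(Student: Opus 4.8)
The plan is to read the covering condition through the Kakutani--Rokhlin towers generated by the sections $L_k$. For each $k$, the first return map $F_{\pi,k}:L_k\to L_k$ organizes $L_{k-1}$ into a tower with base $L_k=\bigsqcup_{i}L_{k,i}$ and columns of heights $|\chi_k(i)|$, the $t$-th level of the $i$-th column being $F^t_{\pi,k-1}(L_{k,i})$. Since $F_\pi=\mathfrak{a}\circ R_\pi$ is piecewise a translation, every level is again an interval of the common base length $q^{-1}2^{-kN}$, and a volume count shows that $\chi_k$ is covering, $\sum_i|\chi_k(i)|=q2^N$, if and only if this tower exhausts $L_{k-1}$ up to a null set. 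Composing, if $\chi_1,\dots,\chi_k$ are all covering then the $F_\pi$-tower over $L_k$ (with heights $|\chi_1\circ\cdots\circ\chi_k(i)|$) fills $I$ mod $0$, yielding a refining sequence of partitions of $I$ into equal-length intervals whose mesh tends to $0$. I would prove the two implications by exploiting, respectively, the leftover of a non-covering tower and the generating property of these interval partitions.

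For the direction ``not all covering $\Rightarrow$ $\mbox{Leb}$ not ergodic,'' suppose $\chi_{k}$ fails to cover, so the leftover $U=L_{k-1}\setminus\bigcup_{i}\bigcup_{t}F^t_{\pi,k-1}(L_{k,i})$ has positive Lebesgue measure. By construction $U$ consists of points whose $F_{\pi,k-1}$-orbit never enters $L_{k}$. Now $F_{\pi,k-1}$ is itself a scaled rotated odometer (Proposition~\ref{lem:periodicpermutations-1}(\ref{it1-nn})), so the structure theorem applies to it: its aperiodic points (Lebesgue-a.e. in $(L_{k-1})_{np}$) accumulate at the left endpoint and hence visit every subsection $L_{m}$, $m\ge k$, by ergodicity of the aperiodic subsystem. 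Consequently $U$ lies, mod $0$, in the periodic part, and periodic points of $F_{\pi,k-1}$ are periodic for $F_\pi$; thus $\mbox{Leb}(I_{per})>0$. Since $0\in I_{np}$ and the dynamics near $0$ is that of the Lebesgue-ergodic map $\mathfrak{a}$, also $\mbox{Leb}(I_{np})>0$, so $I_{per}$ is an $F_\pi$-invariant set of measure strictly between $0$ and $1$, and $\mbox{Leb}$ is not ergodic.

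For the converse, assume all $\chi_k$ cover. The same dichotomy used above now runs the other way: a.e. point visits every $L_k$, hence has an orbit approaching $0$, so it cannot be periodic; therefore $\mbox{Leb}(I_{per})=0$ and $\mbox{Leb}$ is a conservative invariant measure carried by $(I_{np},F_\pi)$. To upgrade this to ergodicity, let $A$ be $F_\pi$-invariant and set $d_{k,i}=\mbox{Leb}(A\cap L_{k,i})/\mbox{Leb}(L_{k,i})$. Invariance and measure preservation make the density of $A$ constant along each tower column, equal to $d_{k,i}$; grouping the levels of the $F_{\pi,k}$-towers by the letter they read off $\chi_{k}$ gives the linear recursion $2^N\,d_{k-1,j}=\sum_{i}(M_k)_{ij}\,d_{k,i}$, i.e. $\mathbf d_{k-1}=2^{-N}M_k^{\top}\mathbf d_{k}$ (with $M_k=M$ in the stationary case). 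On the other hand, since the interval partitions generate, the Lebesgue density theorem forces $d_{k,i(x,k)}\to\mathbf 1_A(x)$ along the column index $i(x,k)$ of a.e. $x$. I would then combine the a.e.\ $\{0,1\}$-limits with the recursion for $\mathbf d_k$: the admissible weight vectors live in the dominant (Perron) eigendirection of $2^{-N}M^{\top}$ that represents $\mbox{Leb}$, and the only solutions compatible with $\{0,1\}$-valued limits are $\mbox{Leb}(A)\in\{0,1\}$.

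The main obstacle is precisely this last step, the \emph{homogenization} of the column densities. When $M$ is primitive the Perron--Frobenius theorem makes $2^{-N}M^{\top}$ strictly contracting on the relevant hyperplane and the conclusion is immediate; but the proposition is stated for general rotated odometers, whose matrices need not be primitive, and indeed the systems may carry several mutually singular ergodic measures (at most $q$, by the structure theorem). The delicate point is therefore to show that $\mbox{Leb}$, realized by the equal-length interval tower, is an \emph{extreme} point rather than a non-trivial mixture: one must rule out an invariant $A$ whose densities split the $q$ columns into two recurrent classes consistent with the $S$-adic substitution structure. I expect to handle this by passing to the induced covering rotated odometers $F_{\pi,k}$ and using the minimality of $(I_{min},F_\pi)$ together with the recurrence that Lebesgue-generic orbits visit every section, so that no such splitting can survive all renormalization levels.
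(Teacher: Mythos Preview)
The paper does not actually prove this proposition here; it is quoted as a result from \cite{BL23-JLMS}, so there is no in-text proof to compare against. That said, your proposal deserves a substantive assessment.

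Your argument for ``some $\chi_k$ not covering $\Rightarrow$ $\mbox{Leb}$ not ergodic'' is essentially sound: the leftover $U\subset L_{k-1}$ consists of points whose $F_{\pi,k-1}$-orbit avoids $L_k$, and since every aperiodic orbit has $\omega$-limit set containing the unique minimal set (which contains $0$), such points must be periodic; periodicity for the induced map lifts to periodicity for $F_\pi$, giving $\mbox{Leb}(I_{per})>0$. One caution: you invoke ``ergodicity of the aperiodic subsystem,'' but what you actually need and use is the purely topological fact that every orbit accumulates on the minimal set---be careful not to smuggle in measure-theoretic ergodicity, which is what is at stake.

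The converse direction, however, has a genuine gap, and it is precisely where you flag it. Your recursion $\mathbf{d}_{k-1}=2^{-N}M_k^{\top}\mathbf{d}_k$ is correct, and the matrices $2^{-N}M_k^{\top}$ are row-stochastic (constant column sums $2^N$), so the constant vector is preserved. But the assertion that ``the admissible weight vectors live in the dominant Perron eigendirection'' is exactly the statement that Lebesgue is ergodic, so invoking it is circular. For primitive $M$ the products $2^{-kN}(M_1\cdots M_k)^{\top}$ contract to rank one and the conclusion follows; for the non-primitive matrices that actually arise here (e.g.\ Examples~\ref{ex-5-1nue}, \ref{ex-5-3nue}) this contraction fails, and there really are other ergodic measures with their own density profiles. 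What must be shown is that \emph{Lebesgue specifically} is extreme---equivalently, that the constant left eigenvector $(1,\dots,1)$ for the leading eigenvalue $2^N$ yields an ergodic measure. This is where the argument in \cite{BL23-JLMS} leans on the Bratteli--Vershik description and the characterization of ergodic measures via distinguished eigenvalues as in \cite{BKMS2010} (cf.\ Section~\ref{matrix-measures}): covering forces $2^N$ to be the spectral radius with strictly positive left eigenvector, which is a distinguished eigenvalue, hence gives an ergodic measure. Your proposed fix---minimality of $I_{\min}$ plus recurrence of Lebesgue-generic orbits to every $L_k$---does not by itself separate Lebesgue from a nontrivial convex combination of ergodic measures, since recurrence to $L_k$ is already a consequence of covering and holds for every invariant measure of full support.
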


Thus, in our theorems, when we consider a rotated odometer with Lebesgue ergodic measure, we are automatically considering covering substitutions. 

Let $(X_\chi,\sigma)$ denote the smallest subshift such that every subword $W$ that it contains is a subword of
$\chi_{s} \circ \cdots \circ \chi_1(a)$ for some $a \in \mathcal{A}$ and $s \in \mathbb{N}$; it symbolically describes the aperiodic part $(I_{np}, F_\pi)$ of the rotated odometer. 

Lemma~\ref{lemma-code-orbits} is a special case of \cite[Theorem 5.10]{BL23-JLMS}, when every substitution in $(\chi_k)_{k \geq 1}$ is covering.  Since our main results are for ergodic Lebesgue measures, we include a simple proof of the existence of coding in this case for completeness. The proof of the general case in \cite[Theorem 5.10]{BL23-JLMS} requires more involved techniques.

\begin{lemma}\label{lemma-code-orbits}
Let $(I,F_\pi)$ be a rotated odometer, with associated sequence of substitutions $(\chi_k)_{k \geq 1}$. Suppose $\chi_k$ is covering for all $k \geq 1$. Then for any $x \in I$ and $n \geq 1$, there exists $s \in \mathbb N$, $a \in \mathcal{A}$ and a subword $W$ of $\chi_{s} \circ \cdots \circ \chi_1(a)$ such that $W$ is the itinerary of the orbit $\{ F^k_\pi(x) : 0 \leq k \leq n\}$ with respect to the coding partition $\mathcal{P}_{0,q}$. \end{lemma}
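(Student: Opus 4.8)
The plan is to exploit the Kakutani--Rokhlin tower structure at each level $k$ furnished by Proposition~\ref{lem:periodicpermutations-1}, choosing $k$ large relative to $n$. First I record the coding fact already implicit in the excerpt: for each $k$ and each $i \in \mathcal{A}$, the word $\chi_1 \circ \cdots \circ \chi_k(i)$ is the itinerary in $I$, with respect to $\mathcal{P}_{0,q}$, of the \emph{entire} base interval $L_{k,i}$ during its first excursion, i.e. for every $z \in L_{k,i}$ and every $0 \le t < h_{k,i} := |\chi_1 \circ \cdots \circ \chi_k(i)|$ the point $F_\pi^t(z)$ lies in the partition element indexed by the $t$-th letter of $\chi_1 \circ \cdots \circ \chi_k(i)$. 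The itinerary is constant on $L_{k,i}$ precisely because $F_{\pi,k}$ is the first return to $L_k$, so the interval is not cut during the excursion. Since all substitutions are covering, Proposition~\ref{prop:LebErg} gives that $F_\pi$ is aperiodic, so $I = I_{np}$, and the tower levels $\{ F_\pi^t(L_{k,i}) : i \in \mathcal{A},\ 0 \le t < h_{k,i}\}$ partition $I$; with the half-open convention this assigns to every $x \in I$ a well-defined tower and height.

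Next I reduce to the forward return time $r_k(x) := \min\{ t \ge 1 : F_\pi^t(x) \in L_k\}$. Fix $x$ and $n$. If $r_k(x) > n$ then the segment stays inside one tower: writing $x \in F_\pi^{t_0}(L_{k,i_0})$, the required word $W$ is the subword of $\chi_1 \circ \cdots \circ \chi_k(i_0)$ at positions $t_0, \dots, t_0 + n$, so we are done with $s = k$, $a = i_0$. It therefore suffices to arrange $r_k(x) > n$. The key point is that $r_k(x)$ is non-decreasing in $k$, and for fixed $t \ge 1$ one has $F_\pi^t(x) \in L_k$ for all large $k$ only if $F_\pi^t(x) = 0$. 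Hence, unless the forward orbit of $x$ meets $0$ at or before time $n$, choosing $k$ large enough to exclude the finitely many positive points $F_\pi(x), \dots, F_\pi^n(x)$ from $L_k$ yields $r_k(x) > n$. The case $x = 0$ is immediate, since then the segment is the length-$(n+1)$ prefix of $\rho = \lim_s \chi_1 \circ \cdots \circ \chi_s(0)$, hence a prefix of $\chi_1 \circ \cdots \circ \chi_s(0)$ for $s$ large.

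The remaining and genuinely harder case is when $F_\pi^m(x) = 0$ for some $1 \le m \le n$. Aperiodicity forces $m$ to be unique, and for $k$ large the only point of the segment lying in $L_k$ is $F_\pi^m(x) = 0$, since the other iterates are positive and leave $L_k$; consequently $r_k(x) = m$ and $F_\pi^{m-1}(x)$ sits at the top of the tower $i^-$ containing $x$. Then the itinerary $V = v_0 \cdots v_n$ of the segment splits as a suffix of length $m$ of $\chi_1 \circ \cdots \circ \chi_k(i^-)$ concatenated with a prefix of length $n - m + 1$ of $\chi_1 \circ \cdots \circ \chi_k(0)$ (the tower based at $0$). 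The obstacle is that $0$ lies at the bottom of its tower at \emph{every} level, so the junction $i^-,\,0$ is never absorbed into a single substitution word by merely increasing $k$: read off the literal orbit through $0$, the pair $i^-,\,0$ is always split across a word boundary, and anchoring instead on the base point beneath $x$ fails in the same way, because that point also meets $0$ in forward time.

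To overcome this I will realise the transition $i^- \to 0$ in the \emph{interior} of a higher-level word, using the covering hypothesis. The base point $z \in L_{k,i^-}$ beneath $x$ satisfies $F_{\pi,k}(z) = F_\pi^m(x) = 0 \in L_{k,0}$, and since $F_{\pi,k}$ is a local translation mapping a right-neighbourhood of $z$ onto a right-neighbourhood of $0$, while $L_{k,i^-}$ has length $2^{-kN}/q > 2^{-(k+1)N} = |L_{k+1}|$ (because $q < 2^N$), one finds $w \in L_{k,i^-}$ with $F_{\pi,k}(w) \in L_{k,0} \setminus L_{k+1}$. For such $w$ the step $w \mapsto F_{\pi,k}(w)$ is not a first return to $L_{k+1}$, so in the level-$(k+1)$ word $\chi_{k+1}(a)$ coding the tower $L_{k+1,a}$ through which $w$ passes, the symbols $i^-$ and $0$ occur consecutively and in the interior. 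Hence $\chi_1 \circ \cdots \circ \chi_{k+1}(a)$ contains $\chi_1 \circ \cdots \circ \chi_k(i^-)\, \chi_1 \circ \cdots \circ \chi_k(0)$ as a factor, and $V$ appears as the subword straddling this junction, giving $W$ with $s = k+1$. The step I expect to demand the most care is exactly this gluing: verifying through the covering condition and the length inequality that the transition $i^- \to 0$ genuinely occurs away from the boundary of a higher word, rather than perpetually at a word boundary as it does along the orbit passing through $0$ itself.
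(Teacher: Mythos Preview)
Your argument is correct and follows essentially the same tower-based approach as the paper, but the ``genuinely harder case'' you single out is in fact empty. The von Neumann--Kakutani map $\mathfrak{a}$ sends $I_n = [1-2^{1-n},1-2^{-n})$ onto $[2^{-n},2^{1-n})$, so $\mathfrak{a}(I) = (0,1)$ and hence $0 \notin F_\pi(I) = \mathfrak{a}(R_\pi(I))$. Thus $F_\pi^m(x) = 0$ is impossible for any $m \ge 1$, and your cases~1 and~2 already exhaust all possibilities. The paper records this as ``$F_\pi$ is not invertible at $0$, so $F^{j_1}(x) \neq 0$,'' and then simply follows the orbit through successive towers at a fixed level $k$, noting (as you do) that taking $k$ large collapses the description to a single tower.

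So your elaborate gluing argument --- finding a point $w$ whose level-$k$ transition $i^- \to 0$ lands outside $L_{k+1}$ so as to place the junction in the interior of a level-$(k{+}1)$ word --- is never needed. The very obstacle you worried about (the orbit through $0$ always straddling a word boundary) cannot arise, because no forward orbit ever reaches $0$.
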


\begin{proof}
Let $(L_k)_{k \geq 1}$ be the sequence of sections in Proposition~\ref{lem:periodicpermutations-1}, and let $(\chi_k)_{k \geq 1}$ be the sequence of associated substitutions. Let $x \in I$, and fix an $n \in \mathbb N_0$. For convenience, we introduce the notation
  $$
  \chi^{(k)} (a) = \chi_k \circ \dots \circ \chi_1 (a), \qquad a \in \mathcal A.
  $$
Since every $\chi_k$ is covering for every $k \geq 1$, there is $a_1 \in \mathcal{A}$ such that $x \in F^{j_0}_\pi(L_{k,a_1})$ for some $j_0 \geq 0$. Without loss of generality we may assume that $k$ is so large that $x \notin L_k$, and thus $j_0 >0$.

If $|\chi^{(k)}(a_1)| - j_0 > n$, then the part of the substitution word $\chi^{(k)}(a_1)$, starting from the $j_0$-th symbol and of length $n$, codes the orbit of $x$ of length $n$.

Otherwise we proceed as follows. The orbit of $L_{k,a_1}$ returns to $L_k$ after $|\chi^{(k)}|$ iterations. Then the orbit of $x$ returns to $L_k$ after $j_1  = |\chi^{(k)}(a_1) | - j_0 $ iterations. Since $F_\pi$ is not invertible at $0$, $F^{j_1}(x) \ne 0$, and thus there exists a half-open subinterval $V_1 \subset L_{k,a_1}$ with $x \in F_\pi^{j_0} (V_1)$ and such that $F^{j_0+j_1} (V_1) \subset L_{k,a_2}$ for some $a_2 \in \mathcal A$. If $|\chi^{(k)} (a_1)| - j_0 + |\chi^{(k)}(a_2)| > n$, then the part of the concatenated substitution word $\chi^{(k)}(a_1)\chi^{(k)}(a_2)$, starting from the $j_0$-th symbol and of length $n$, codes the orbit of $x$ of length $n$.

If not, then, continuing by induction, we can find a sequence of half-open subintervals $V_s \subset V_{s-1} \subset \cdots \subset V_1 \subset  L_{k,a_1}$, and a sequence of letters $a_1, a_2, \ldots, a_s$ such that, for any $1 \leq i < s$, $x \in F^{j_0}(V_i)$, and $F^{j_0 + \sum_{m=1}^i j_m} (V_i) \subset L_{k,a_{i+1}}$, and
$$
|\chi^{(k)}(a_1)| - j_0 + \sum_{i=2}^s |\chi^{(k)}(a_i)| > n.
$$
The previous inequality holds for some $s \geq 1$ since all substitution words have non-zero length. Thus the part of the concatenated substitution word $\chi^{(k)}(a_1) \cdots \chi^{(k)}(a_s)$ of length $n$ starting from the $j_0$-th symbol codes the orbit of $x$ of length $n$. The choice of $k$ in this argument is not unique; taking larger values of $k$ will result in a smaller value of $s$. Nevertheless, the coding is unique for a given $x \in I$, since it is always done with respect to the same partition $\mathcal P_{0,q}$. In our argument, we do not need to assume that $x$ is a Lebesgue typical point; rather, any open neighborhood of $x$ contains an interval and so has positive measure; therefore it contains a Lebesgue typical point, whose orbit of length $n$ has the same coding as the orbit of $x$.
\end{proof}

In Proposition~\ref{lem:periodicpermutations-1}, the sequence $(\chi_k)_{k \geq 1}$ is pre-periodic. Therefore,  the properties of the corresponding shift are effectively the same as for the case of a single substitution. Namely,
for $k_0$ the length of the pre-periodic part of $(\chi_k)_{k \geq 1}$ and  $p_0$ the period, set
\begin{equation}\label{eq-bmat}
 M = M_{k_0+p_0} \cdots M_{k_0+1}.
\end{equation}
Thus we may reduce the problem to the study of a single substitution $\chi$ with associated matrix $M$. If $\chi_k = \chi$ for all $k \geq 1$, we say that $\chi_1 = \chi$ is stationary.

\subsection{The associated matrix in Frobenius form}\label{matrix-measures} We now assume that the sequence of substitutions associated to the rotated odometer $(I,F_\pi)$ is stationary.
The matrix $M$ of a covering substitution has column sums equal to $2^N$, so $\lambda_0 = 2^N$ is also the leading eigenvalue, with a positive left eigenvector $\ell_0 = (1,1, \dots, 1)$.
This reflects the constant density of Lebesgue measure.
However, even if Lebesgue measure is ergodic, it needs not be the only ergodic invariant measure.
This can be seen from the {\em Frobenius form} of the matrix $M$:
by conjugating with a permutation matrix $\Pi$, corresponding to relabeling the symbols in $\mathcal{A}$, we can
obtain the form
\begin{equation}\label{eq:frob}
M_F = \Pi^{-1} M \Pi =
\begin{pmatrix}
 D_1 & O & \dots & \dots & O \\
 A_{2,1} & D_2 & O  & & O \\
 A_{3,1} & A_{3,2} & D_3 & & \vdots \\
 \vdots & & & \ddots & \vdots \\
 A_{d,1} & \dots & & A_{d,d-1} & D_d
\end{pmatrix},
\end{equation}
where the $D_i$ are square diagonal blocks, which are either irreducible or zero matrices, $O$ are (rectangular) blocks of zeroes and the $A_{i,j}$ are rectangular non-negative matrices. The block $D_1$ represents the (unique) minimal subsystem of the rotated odometer.
Since $\chi(a)$ starts with $0$ for each $a \in \mathcal{A}$, the first column of $M$ is strictly positive, and so all $A_{i,1}$, $2 \leq i \leq d$, are non-zero.

Clearly
\begin{equation}\label{eq:Det}
 \det(M_F - \lambda I) = \prod_{k=1}^d \det(D_k - \lambda I_k),
\end{equation}
where $I$ and $I_k$ stand for the identity matrices of the correct size.
Hence every eigenvalue of $M$ must be an eigenvalue of $D_k$ for some $k$, and vice versa.
In particular, as a consequence of the Perron-Frobenius Theorem, the number of eigenvalues $\lambda_j$ of $M$
that are strictly greater than $1$ is at least the number of non-zero diagonal blocks in \eqref{eq:frob}.  It follows from \cite{BKMS2010} that the number of ergodic invariant measures for the rotated odometer $(I,F_\pi)$ corresponds 
to the number of eigenvalues greater than $1$ with non-negative eigenvectors.
Such eigenvalues are called {\em distinguished} and every non-zero block $D_k$ has a distinguished eigenvalue $\lambda_k > 1$ provided $A_{k,j}$ is zero for each $\lambda_j \geq \lambda_k$.

If a subsitution is not covering, then $M$ can still be put into the Frobenius form, and the same method of determining the number of ergodic invariant measures applies. The difference with the covering case is that the column sums no longer are equal to $2^N$.

\subsection{Doubling the middle symbol}\label{sec:doub} We now describe a modification of the stationary substitution $\chi: \mathcal{A} \to \mathcal{A}^*$ associated to a rotated odometer $F_\pi: I \to I$, which we need to study its skew-product extensions.

Recall from \eqref{skew-function} that the skew-function $\psi$ has two intervals of continuity, $[0, \frac{1}{2})$ and $[\frac{1}{2},1)$.
If the number $q$ in the definition of $F_\pi$, and, consequently, the number of sets in the partition $\mathcal P_{0,q}$ of the unit interval used to define an IET $R_\pi$ is odd, then the skew-function
$\psi$ is not constant on the middle interval $[\frac{m}{q}, \frac{m+1}{q})$.

For this reason, we split this middle interval into two halves, with symbols $m^+$
for $[\frac{m}{q}, \frac12)$ and $m^-$ for $[\frac12, \frac{m+1}{q})$,
so that the new alphabet $\mathcal{A}' = \{0, 1, \dots, m^+,m^-, \dots q-1\}$
has $q' = q+1$ letters.

The corresponding $q' \times q'$-matrix $M'$ has identical
$m^+$-th and $m^-$-th rows, but the $m^+$-th and $m^-$-th column need not be the same.
Consequently, compared to the eigenvalues of $M$,  $M'$ will have one extra eigenvalue $0$.
If $\chi$ is covering, the leading
eigenvalue is still $2^N$, but the corresponding left eigenvector
is $(1, \dots, 1, \frac12, \frac12,1,\dots, 1)$ and the column sum is $2^N$ only if you leave out the contribution of the $m^-$-th row, and also add the $m^+$-th and $m^-$-th columns together.

\begin{definition}\label{defn-weights-words}
Let $\mathcal A' = \{0,1,\ldots,m^+,m^-,\ldots, q-1\}$ be an extended alphabet, and $\chi: \mathcal A' \to (\mathcal A')^*$ be a substitution. Let $\psi: I \to \mathbb Z$ be the skew-function defined in \eqref{skew-function}. Then the \emph{weight} of a letter $\chi(a)$, $a \in \mathcal A'$, is given by
  \begin{align*} \psi(a) = \left\{ \begin{array}{ll} +1, & a \in \{0,\ldots, m-1, m^+\}, \\  -1, & a \in \{m^-,m+1,\ldots, q-1\}.\end{array} \right.
  \end{align*}
 The weight of a substitution word $\chi(a) = w_1 \cdots w_\ell$, for $\ell \in \mathbb N$, is given by
   $$\psi(\chi(a)) = \sum_{i = 1}^\ell \psi(w_i).$$ 
\end{definition}

Alternatively, note that the number of occurrences of each letter in the substitution word $ \chi(a)$, $a \in \mathcal A'$, is given by the $a$-th row $m_a$ of the associated matrix $M'$. Then
  $$\psi(\chi(a)) = m_a (\underbrace{1,\ldots,1}_{\textrm{from }0 \textrm{ to }m^-},\underbrace{-1,\ldots, -1}_{\textrm{from }m^+ \textrm{ to }q-1})^T.$$

\begin{remark} 
In the rest of the paper, when we consider the skew-product system \eqref{eq-skewprod}, we will often drop the prime accent and denote $\mathcal{A}'$ and $M'$ by $\mathcal{A}$ and $M$, keeping in mind that they are then an alphabet with $q+1$ letters, and a $(q+1) \times (q+1)$-matrix respectively.
\end{remark}

\begin{remark}
We will always order the eigenvalues by
absolute value:
$\lambda_0 \geq |\lambda_1|
\geq  \dots \geq |\lambda_{q}|$.
The corresponding left eigenvectors
are denoted by $\ell_j$, and scaled in such a way that for some Jordan decomposition
$M = UJU^{-1}$, the $\ell_j$'s are the rows of $U^{-1}$, and also $\ell_0$ is a non-negative vector.
\end{remark}

\subsection{Essential values}\label{sec-essential}

We start by recalling some basics of theory of essential values. Main references for this material are \cite{LM02,M04,S77}. 

We denote by $(X,\mathcal{B},\mu)$ the probability space, where $\mathcal{B}$ is the collection of Borel sets in $X$, and $\mu$ is the probability measure.

\begin{definition}\label{defn-cocycle}
Let $(X,\mathcal{B},\mu)$ be the probability space, and $F: X \to X$ an ergodic transformation.
A Borel map $\Psi: X \times \mathbb{Z} \to \mathbb{Z}$ is called a \emph{cocycle} for $F$ if 
 \begin{equation}\label{eq-cocycle}
    \Psi(x,n_1+n_2) = \Psi(F^{n_2}(x),n_1) + \Psi(x,n_2).
  \end{equation}
for every $n_1,n_2 \in \mathbb{Z}$ and every $x \in X$, and  
 $\mu \left(\bigcup_{n \in \mathbb{Z}} \left( \{x : F^n(x) = x\} \cap \{ \Psi(x,n) \ne 0\} \right) \right) = 0$.
\end{definition}

A cocycle $\Psi:X \to \mathbb{Z}$ is called a \emph{coboundary} if there exists a Borel map $\phi: X \to \mathbb{Z}$ such that
  \begin{equation}\label{eq-coboundary}
   \Psi(x,n) = \phi(F^n(x)) - \phi(x) \quad \text{ for all  } n \in \mathbb{Z}.
  \end{equation}

We now define essential values.

\begin{definition}\label{defn-essential}
Let $F: X \to X$ be an ergodic transformation on the probability space $(X,\mathcal{B},\mu)$. Let $\Psi: X \times \mathbb{Z} \to \mathbb{Z}$ be a cocycle for $F$. Then an element $e \in \mathbb{Z}$ is called an \emph{essential value} of $\Psi$ if, for every $A \in \mathcal{B}$ with $\mu(A)>0$,
  $$
  \mu\left( \bigcup_{n \in \mathbb{Z}}  A \cap F^{-n}(A) \cap \left \{ x : \Psi(x,n) = e \right\}  \right)>0.
  $$
\end{definition}
 
As described in \cite{LM02}, given any skew-function $\psi: X \to \mathbb{Z}$, one can define a cocycle $\Psi: X \times \mathbb{Z} \to \mathbb{Z}$ by
\begin{align}\label{eq-cocycle-psi}
\Psi(x,n) =
\begin{cases}
 \psi(F^{n-1}x)+ \cdots +\psi(F(x))+\psi(x), & n \geq 1 \\
 0, & n=0, \\
- \psi(F^{n}(x))  - \cdots - \psi(F^{-1}(x)), & n \leq -1.
\end{cases}
\end{align}
Conversely, given a cocycle $\Psi$, one can define a skew-function $\psi: X \to \mathbb{Z}$ by $\psi(x) = \Psi(x,1)$.

Denote the partial ergodic sums by $S_n \psi (x) = \sum_{i = 0}^{n-1} \psi(F^i(x))$. The following is well-known.

\begin{lemma}\label{lemma:ess}
Let $F: X \to X$ be an ergodic transformation on the probability space $(X,\mathcal{B},\mu)$. Let $\Psi: X \times \mathbb{Z} \to \mathbb{Z}$ be a cocycle for $F$, and let $\psi: X \to \mathbb{Z}$ be the skew-function defined by $\psi(x) = \Psi(x,1)$. Then the element $e \in \mathbb{Z}$ is an
{\em essential value} of the cocycle $\Psi$ if and only if for every positive measure $A \in \mathcal{B}$ there
exists an $n \in \mathbb{Z}$ such that
\begin{equation}\label{eq:essval}
\mu\left(A \cap F^{-n}(A) \cap \{ x \in X : S_n \psi(x) = e\}\right) > 0.
\end{equation}
\end{lemma}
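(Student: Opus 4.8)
The plan is to show that the two conditions are equivalent by unwinding the definitions and translating between the cocycle formulation and the partial-sum formulation. The key observation is that the skew-function $\psi(x) = \Psi(x,1)$ determines $\Psi$ completely through the cocycle relation \eqref{eq-cocycle}, and that $\Psi(x,n)$ coincides with the partial ergodic sum $S_n\psi(x)$ for $n \geq 0$. First I would verify this identity: iterating the cocycle relation \eqref{eq-cocycle} with $n_2 = 1$ and $n_1 = n-1$ gives $\Psi(x,n) = \Psi(F^{n-1}(x),1) + \Psi(x,n-1) = \psi(F^{n-1}(x)) + \Psi(x,n-1)$, so by induction $\Psi(x,n) = \sum_{i=0}^{n-1}\psi(F^i(x)) = S_n\psi(x)$ for all $n \geq 0$. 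This is exactly the first case of \eqref{eq-cocycle-psi}, and the negative-$n$ case follows symmetrically; the upshot is that the set $\{x : \Psi(x,n) = e\}$ in Definition~\ref{defn-essential} is literally the set $\{x : S_n\psi(x) = e\}$ appearing in \eqref{eq:essval}.

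With this identification in hand, I would compare Definition~\ref{defn-essential} directly with condition \eqref{eq:essval}. Definition~\ref{defn-essential} requires that for every $A \in \mathcal{B}$ with $\mu(A) > 0$,
$$
\mu\left( \bigcup_{n \in \mathbb{Z}} A \cap F^{-n}(A) \cap \{x : \Psi(x,n) = e\}\right) > 0,
$$
whereas \eqref{eq:essval} asks that for every such $A$ there exist a single $n \in \mathbb{Z}$ with
$$
\mu\left(A \cap F^{-n}(A) \cap \{x : S_n\psi(x) = e\}\right) > 0.
$$
After substituting $\Psi(x,n) = S_n\psi(x)$, the two statements differ only in whether one takes a union over $n$ inside the measure, or asserts the existence of a single $n$ with positive measure for the corresponding set. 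The forward direction (essential value implies \eqref{eq:essval}) is the easy half: if a countable union of sets has positive measure, then by countable subadditivity at least one of the sets in the union must have positive measure, which yields the required $n$. The reverse direction is immediate: if a single set in the union has positive measure, then the union does too, since it contains that set.

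The main (indeed only) obstacle is bookkeeping rather than mathematical depth: I must be careful that the index set is $\mathbb{Z}$, which is countable, so that countable subadditivity genuinely applies, and I must confirm that the identity $\Psi(x,n) = S_n\psi(x)$ holds for the negative values $n \leq -1$ as well, where the sign conventions in \eqref{eq-cocycle-psi} must be matched against the cocycle relation. For $n \leq -1$ one sets $n_2 = n$, $n_1 = -n$ in \eqref{eq-cocycle} and uses $\Psi(x,0) = 0$ to obtain $\Psi(x,n) = -\Psi(F^n(x),-n) = -S_{-n}\psi(F^n(x))$, which is precisely the third case of \eqref{eq-cocycle-psi}. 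Once these routine identifications are checked, the equivalence of the two conditions is a one-line application of countable subadditivity in each direction, and the proof is complete.
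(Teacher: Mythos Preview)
Your proof is correct; the paper does not actually prove this lemma but merely states it as ``well-known,'' so there is no approach to compare against. One trivial slip: where you write ``$n_2 = 1$ and $n_1 = n-1$,'' the recursion you then display, $\Psi(x,n) = \Psi(F^{n-1}(x),1) + \Psi(x,n-1)$, in fact corresponds to the choice $n_1 = 1$, $n_2 = n-1$ in \eqref{eq-cocycle}; either labeling yields the same induction, so this does not affect the argument.
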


In particular, $e = 0$ is always an essential value, by taking $n = 0$ in \eqref{eq:essval}.

\begin{definition}
Let $F: X \to X$ be an ergodic transformation on the probability space $(X,\mathcal{B},\mu)$, and let $\Psi: X \times \mathbb{Z} \to \mathbb{Z}$ be a cocycle for $F$.
We call the essential value $0$ \emph{non-trivial}, if \eqref{eq:essval} holds for $e = 0$ and some $n_A \neq 0$, for any $A \in \mathcal B$ with $\mu(A)>0$.

If $0$ is a non-trivial essential value, then the cocycle $\Psi: X \to \mathbb{Z}$ is \emph{recurrent}.

If $\Psi$ is not recurrent, then $\Psi$ is \emph{transient}.
\end{definition}

Let $\overline{\mathbb{Z}} = \mathbb{Z} \cup \{\infty\}$ be the one-point compactification of $\mathbb{Z}$. 

\begin{definition}\label{inf-essential}
We say that $\infty$ is an essential value for the cocycle $\Psi: X \times \mathbb{Z}\to \mathbb{Z}$ (with associated skew-function $\psi$) if for every $N \in \mathbb{N}$,
and every positive measure set $A \in \mathcal{B}$ there
exists an $n \in \mathbb{Z}$ such that
$$
\mu\left(A \cap F^{-n}(A) \cap \{ x \in X : |S_n \psi(x)| \geq N\} \right) >0.
$$
\end{definition}
Denote by $\overline E(\Psi)$ the set of essential values of the cocycle $\Psi$ in $\overline{\mathbb{Z}}$. One of the applications of essential values is to the study of \emph{skew-products} of dynamical systems.

\begin{definition}\label{defn-skew-product-details}
Let $(X,\mathcal B,\mu)$ be a probability measure space, and $F: X \to X$ be a measure-preserving transformation. Let $\nu$ be the counting measure on $\mathbb Z$. Consider a measure space $(X \times \mathbb{Z}, \mathcal B \times 2^{\mathbb Z}, \mu \otimes \nu)$, let $\psi: X \to \mathbb Z$ be a skew-function, and $\Psi: X \times \mathbb Z \to \mathbb Z$ be the associated cocycle, as in \eqref{eq-cocycle-psi}. 
Then the \emph{skew-product} of $F$ is the transformation
\begin{equation}\label{eq-skewprod-general}
T: X \times \mathbb{Z} \to X \times \mathbb{Z}, \qquad (x,n) \mapsto (F(x), n+\psi(x)),
\end{equation}
or, alternatively, denoting $T^m = T \circ \cdots \circ T$,
\begin{equation}\label{eq-skewprod-general-group}
T^m: X \times \mathbb{Z} \to X \times \mathbb{Z}, \qquad (x,n) \mapsto (F^m(x), n+\Psi(x,m)).
\end{equation}
\end{definition}

\begin{remark}
Formula \eqref{eq-skewprod-general-group} explicitly defines the skew-product of the action of any element $m$ of the acting group $\mathbb Z$, while formula \eqref{eq-skewprod-general} only specifies the skew-product of the action of a generator of $\mathbb Z$. Both definitions can be found in the literature. Formula \eqref{eq-skewprod-general-group} is suited to define skew-products of actions of finitely generated groups, which may have more than one generator. \end{remark}

We recall the following properties of essential values from \cite[Theorem 3.9, Propositions 3.14 and 3.15, Corollary 5.4]{S77}.

\begin{theorem}\label{thm-e-properties}
Let $F: X \to X$ be an ergodic transformation on the probability space $(X,\mathcal{B},\mu)$, and let $\Psi: X \times \mathbb{Z} \to \mathbb{Z}$ be a cocycle for $T$. Then the following is true:
\begin{enumerate}
\item $\overline E(\Psi)$ is a closed non-empty subset of $\overline{\mathbb{Z}}$, and $E(\Psi) = \overline E(\Psi) \cap \mathbb{Z}$ is a subgroup of $\mathbb{Z}$.
\item $E(\Psi) = \mathbb{Z}$ if and only if the skew-product of $F$ defined in Definition~\ref{defn-skew-product-details} is ergodic.
\item $\Psi$ is a coboundary if and only if $\overline E(\Psi) = \{0\}$, with $0$ being a non-trivial essential value.
\item $\Psi$ is transient if and only if $\overline E(\Psi) = \{0,\infty\}$, with $0$ being a trivial essential value.
\end{enumerate}
\end{theorem}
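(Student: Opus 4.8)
These four assertions are exactly Schmidt's structure theorems for $\mathbb{Z}$-valued cocycles \cite{S77}, so the plan is to reconstruct their standard proofs in the order listed, treating the group structure of $E(\Psi)$ in (1) as the foundation on which (2)--(4) rest. Throughout I would work with the additive cocycle identity $S_{n_1+n_2}\psi(x) = S_{n_1}\psi(x) + S_{n_2}\psi(F^{n_1}x)$, which is the translate of \eqref{eq-cocycle} under the correspondence \eqref{eq-cocycle-psi}, together with ergodicity of $F$ and its a.e.\ invertibility, so that ergodic sums may be read both forward and backward.

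For (1), non-emptiness is immediate since $0 \in E(\Psi)$ by the remark after Lemma~\ref{lemma:ess}. The heart of the matter is closure under the group operations. To show $e_1 + e_2 \in E(\Psi)$, I would start from arbitrary $A$ with $\mu(A) > 0$, apply the essential-value property of $e_1$ to produce $n_1$ and a positive-measure $A_1 \subseteq A \cap F^{-n_1}(A)$ on which $S_{n_1}\psi = e_1$, then apply the essential-value property of $e_2$ to the positive-measure set $F^{n_1}(A_1) \subseteq A$ to obtain $n_2$ and a further positive-measure set; pulling back through $F^{-n_1}$ and composing via the additive identity yields a positive-measure subset of $A \cap F^{-(n_1+n_2)}(A)$ on which $S_{n_1+n_2}\psi = e_1 + e_2$. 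Closure under negation uses invertibility of $F$ and the antisymmetry in \eqref{eq-cocycle-psi}. The main obstacle here is the measure-theoretic bookkeeping: one must check that the successive refinements keep positive measure and that the sets genuinely return to $A$, which is where ergodicity of the base enters. Closedness in $\overline{\mathbb{Z}}$ is then cheap: a subgroup of $\mathbb{Z}$ is $\{0\}$ or infinite, and in the infinite case the presence of arbitrarily large essential values forces $\infty \in \overline{E}(\Psi)$ directly from Definition~\ref{inf-essential}, so each possibility for $\overline{E}(\Psi)$ is closed.

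For (2), I would exploit that the skew-product $T$ commutes with the fibre translations $\theta_k(x,n) = (x, n+k)$, so that $E(\Psi)$ records precisely which translations act trivially on the $T$-invariant $\sigma$-algebra. If $E(\Psi) = \mathbb{Z}$, every fibre translation preserves each $T$-invariant set up to measure zero; combined with ergodicity of $F$ this forces invariant sets to be null or co-null, i.e.\ $T$ is ergodic. Conversely, if $E(\Psi) = d\mathbb{Z}$ with $d \geq 2$, the group structure from (1) yields that $\psi$ is a coboundary modulo $d$, i.e.\ there is measurable $\phi: X \to \mathbb{Z}/d\mathbb{Z}$ with $\psi \equiv \phi\circ F - \phi \pmod d$; then $(x,n) \mapsto (n - \phi(x)) \bmod d$ is a non-constant $T$-invariant function, so $T$ is not ergodic. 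Producing this modular transfer function is the delicate step.

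For (3) and (4) I would run through the trichotomy that (1) leaves available. If $\Psi$ is a coboundary with transfer function $\phi$, the conjugacy $(x,n)\mapsto (x, n-\phi(x))$ turns $T$ into $F \times \mathrm{id}$; each fibre is invariant and carries the ergodic map $F$, so $0$ is a non-trivial essential value while no nonzero value and no $\infty$ can occur, giving $\overline{E}(\Psi) = \{0\}$. The converse, that $\overline{E}(\Psi) = \{0\}$ with $0$ non-trivial forces a coboundary, is the genuinely hard direction: one must manufacture $\phi$ from the recurrence of $0$ together with the absence of any spreading, which I would do by a Gottschalk--Hedlund-type bounded-orbit argument on the invariant sets. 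Finally, for (4), transience means $0$ is a trivial essential value; the group property of (1) then rules out any nonzero finite essential value (else $e$ and $-e$ would compose to render $0$ non-trivial), so $E(\Psi) = \{0\}$; since $\Psi$ is not a coboundary, $\overline{E}(\Psi) \neq \{0\}$, and the only remaining closed option is $\{0,\infty\}$, forcing $\infty \in \overline{E}(\Psi)$. I expect the constructions of the modular and genuine transfer functions in (2) and (3) to be the principal obstacles; the remainder is organizational, resting on the group structure established in (1).
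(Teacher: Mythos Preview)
The paper does not prove this theorem: it is stated purely as a recall of known results, with the proof deferred entirely to Schmidt \cite[Theorem 3.9, Propositions 3.14 and 3.15, Corollary 5.4]{S77}. There is therefore no in-paper argument to compare your proposal against.

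That said, your sketch is a faithful outline of Schmidt's own route: the additive bookkeeping for the group structure in (1), the commutation with fibre translations and the modular-coboundary obstruction for (2), the Gottschalk--Hedlund-type construction of a transfer function for the hard direction of (3), and the elimination argument for (4) are all exactly the mechanisms Schmidt uses. The points you flag as delicate (producing the transfer function modulo $d$ in (2), and the genuine transfer function in (3)) are indeed where the real work lies; the rest is, as you say, organizational. One small wrinkle in your argument for (4): to conclude that a nonzero essential value $e$ forces $0$ to be non-trivial, you need to ensure that the composed return time $n_1 + n_2$ can be taken nonzero, which follows since $S_n\psi = e \neq 0$ already forces $n \neq 0$ in the first step.
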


\subsection{Coboundaries for rotated odometers}

In the setting of rotated odometers, one easy sufficient condition for the cocycle $\Psi$ to be a coboundary is given in Lemma~\ref{lem:cob} below.

The weights of substitution words are defined in Definition~\ref{defn-weights-words}. In the lemma, we denote by $\mathcal A$ the alphabet with the doubled middle symbol, see Section~\ref{sec:doub}.

\begin{lemma}\label{lem:cob}
Let $(I,F_\pi)$ be a stationary covering rotated odometer with the associated substitution $\chi$. Let $\psi$ be the skew-function defined in \eqref{skew-function}, and let $\Psi: I \times \mathbb Z \to \mathbb Z$ be the associated cocycle. If $\psi(\chi(a)) = 0$ for each $a \in \mathcal{A}$, then $0$ is the only non-trivial essential value and $\Psi$ is a coboundary.
\end{lemma}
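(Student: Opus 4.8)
The plan is to show that the hypothesis $\psi(\chi(a)) = 0$ for all $a \in \mathcal{A}$ forces the cocycle $\Psi$ to be a coboundary, which by Theorem~\ref{thm-e-properties}(3) is equivalent to $\overline{E}(\Psi) = \{0\}$ with $0$ non-trivial. The natural strategy is to construct explicitly the transfer function $\phi: I \to \mathbb{Z}$ witnessing \eqref{eq-coboundary}, namely $\Psi(x,n) = \phi(F_\pi^n(x)) - \phi(x)$. Intuitively, $\phi(x)$ should record the accumulated weight of the partial orbit of $0$ up to $x$, measured along the minimal/renormalization structure, so that the telescoping identity built into the cocycle definition \eqref{eq-cocycle-psi} collapses whenever a full substitution word is traversed.

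First I would use Lemma~\ref{lemma-code-orbits} and Proposition~\ref{lem:periodicpermutations-1} to express finite orbit segments of any $x$ as subwords of concatenated substitution words $\chi^{(k)}(a_1)\cdots\chi^{(k)}(a_s)$. The key observation is that whenever the orbit of $x$ makes a complete return to a section $L_k$ — that is, traverses an entire word $\chi^{(k)}(a_i)$ — the net contribution to the partial sum $S_n\psi$ equals $\psi(\chi^{(k)}(a_i))$. Because weights are additive under concatenation (the defining property $\psi(\chi(a)) = \sum \psi(w_i)$, extended to $\chi^{(k)}$ via the matrix $M$), and because $\psi(\chi(a)) = 0$ for every $a$, an induction on $k$ shows that $\psi(\chi^{(k)}(a)) = 0$ for all $k$ and all $a \in \mathcal{A}$. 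Thus every complete return to any section carries zero net weight, and the value of $S_n\psi(x)$ depends only on the position of $x$ and $F_\pi^n(x)$ within the coding, not on how many full words lie between them.

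This suggests defining $\phi(x)$ as the partial weight accumulated from the start of the current section-word up to the position of $x$; concretely, if $x$ sits at position $j_0$ inside the word coding its orbit relative to some section $L_k$, set $\phi(x)$ to be the signed count $\sum_{i=0}^{j_0-1}\psi(\rho_i')$ of the prefix. I would then verify that this is well-defined independently of the choice of $k$ — here the vanishing of all section-weights is exactly what guarantees consistency, since passing to a finer or coarser section shifts the reference point by a whole number of complete words, each of weight zero. With $\phi$ well-defined and integer-valued, the coboundary identity $S_n\psi(x) = \phi(F_\pi^n(x)) - \phi(x)$ follows directly from telescoping the partial sums along the coding.

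The main obstacle I anticipate is the well-definedness and measurability of $\phi$. The coding in Lemma~\ref{lemma-code-orbits} is not canonical — the choice of $k$ (and hence $s$) is not unique — so I must check that the candidate $\phi(x)$ genuinely does not depend on these choices; this is precisely where $\psi(\chi^{(k)}(a)) = 0$ is indispensable, and I would isolate it as the crux of the argument. A secondary point is handling the non-invertibility at $0$ and the measure-zero set of points whose orbits meet $0$; since Lemma~\ref{lemma-code-orbits} already arranges the coding for all $x \in I$ (using that any neighborhood of $x$ contains a Lebesgue-typical point with the same finite coding), $\phi$ is defined everywhere and is constant on the coding cylinders, hence Borel. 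Finally, to conclude that $0$ is a \emph{non-trivial} essential value rather than merely an essential value, I invoke recurrence: by Atkinson's criterion $\int_I \psi\, d\mbox{Leb} = 0$ ensures $T_\pi$ is recurrent, so \eqref{eq:essval} holds for $e=0$ with some $n_A \neq 0$, and then Theorem~\ref{thm-e-properties}(3) gives $\overline{E}(\Psi) = \{0\}$, completing the proof.
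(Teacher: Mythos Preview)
Your approach is essentially the paper's: construct the transfer function $\phi$ explicitly as the partial weight along a substitution word, use $\psi(\chi(a)) = 0$ to guarantee consistency at returns, and invoke Theorem~\ref{thm-e-properties}(3). The paper's execution is more economical, though: it works only with the first section $L_1$, sets $\phi \equiv 0$ on $L_1$, and defines $\phi(F_\pi^n(x)) = \sum_{j=0}^{n-1}\psi(w_j)$ along the word $\chi(a) = w_0\cdots w_{m-1}$ until the first return; since $\chi$ is covering this already defines $\phi$ on all of $I$, and $\psi(\chi(a))=0$ ensures the value $0$ is recovered at each return to $L_1$ --- so there is no need to pass to deeper sections $L_k$ or to check compatibility across different choices of $k$, which is precisely the obstacle you flag as your main concern.
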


This result is well-known, cf.\ \cite{S77}. In the proof below, we just specify the coboundary.

\begin{proof}
As discussed in Section~\ref{sec:renorm}, the orbits of $(I,F_\pi)$ are coded by sequences of letters in $\mathcal A$, and there is the smallest subshift $(X_\chi,\sigma)$ such that every subword $W$ that it contains is a subword of an iterated substitution word.

 By \eqref{eq-coboundary}, we need to find $\phi$ such that the coboundary equation \eqref{eq-coboundary} holds. Let $L_1$ be a section as in Proposition~\ref{lem:periodicpermutations-1}, and $L_{1,a}$, $a \in \mathcal A$, be the sets in the partition of $L_k$.
 For $x \in L_{1,a}$, $a \in \mathcal A$, the itinerary of the orbit of $x$ for the first $m = |\chi(a)|$ steps is given by the substitution word. Let $\chi(a) = w_0 \dots w_{m-1}$, and set $\phi(x) = 0$ for all $x \in L_{1,a}$. Then for $1 \leq n \leq m$, all $x \in L_{1,a}$ and all $a \in \mathcal A$ define 
   \begin{align}\label{eq-coboundary-defn}\phi(F_\pi^i(x)) = \sum_{j = 0}^{n-1} \psi(F_\pi^j(x)) = \sum_{j = 0}^{n-1} \psi(w_j).\end{align}
 Since $\chi$ is covering, $\phi$ is now defined on $I$. Since $\psi(\chi(a)) = 0$ for all $a \in \mathcal A$, the coboundary equation extends for all $x \in I$ and all $n \in \mathbb Z$ by the same formula \eqref{eq-coboundary-defn}.
\end{proof}

\section{Discrepancy and diffusion coefficients}

In this section we compute the upper bounds on the discrepancy and the diffusion coefficient of a stationary covering rotated odometer $(I,F_\pi, \mbox{Leb})$.

\subsection{Discrepancy}\label{sec:discr}

Recall (see for instance \cite[Chapter 1, Sections 1 and 5]{KN}), that a sequence $(x_n)_{n \geq 1} \subset [0,1]$ is {\em uniformly distributed}
if
$$
\lim_{n\to\infty} \frac1n \#\{ 1 \leq j \leq n : x_j \in J\} = |J|
$$
for each interval $J \subset [0,1]$ and {\em well-distributed} if, for each interval $J \subset [0,1]$,
$$
\lim_{n\to\infty} \frac1n \#\{ k+1 \leq j \leq k+n : x_j \in J\} = |J|,
\quad \textrm{ uniformly in } k.
$$
By the Birkhoff ergodic theorem, if $(X,F,\mu)$ is ergodic, then for $\mu$-almost all $x \in X$ the orbit ${\rm orb}(x)$ is uniformly distributed. 

The {\em discrepancy} of $(x_n)_{n \geq 1}$
is a measure of how far the sequence deviates from the exact averages. More precisely:

\begin{definition}\label{def-discr}
Let $(x_n)_{n \geq 1} \subset [0,1]$ be a sequence of real numbers. Then the number
$$
 \mathfrak{D}_R = \sup_{0 \leq  a \leq b \leq 1} \left| \frac1R \#\{ 1 \leq j \leq R : x_j \in [a,b)\} - (b-a) \right|.
 $$
 is called the \emph{discrepancy} of $(x_n)_{n \geq 1}$.
 \end{definition}
 One can also define
 \begin{align}\label{eq-stardiscr}
 \mathfrak{D}^*_R = \sup_k \sup_{0 \leq  a \leq b \leq 1} \left| \frac1R \#\{ k+1 \leq j \leq k+R : x_j \in [a,b)\} - (b-a) \right|.
 \end{align}
 Clearly $\mathfrak{D}_R \leq \mathfrak{D}_R^*$. 
 Note that Definition~\ref{def-discr} does not require the sequence $(x_n)_{n \geq 1}$ to be uniformly (or well-) distributed.

A definition of discrepancy for substitution shifts, using cylinder sets instead of intervals, was used by Adamczewski \cite{Ad04}, in his study of discrepancies  of primitive substitutions. Adamczewski's results \cite{Ad04} were extended to $S$-adic transformations in \cite{BD14}. 
 For many rotated odometers the substitution matrices of the associated shifts are not primitive, therefore, we cannot use the results of \cite{Ad04,BD14}. In 
Theorem~\ref{thm:discr} below we obtain an estimate on the discrepancy of the sequence given by the orbit of $x \in I$ in the case when the associated substitution is covering, which means that Lebesgue measure is ergodic. Theorem~\ref{thm:discr1} below is a restatement of Theorem~\ref{thm:discr} of the introduction, added here for the convenience of the reader.

\begin{theorem}\label{thm:discr1}
Suppose that $F_\pi$ is covering and stationary,
 the associated matrix is diagonalizable, and the largest eigenvalue $\lambda_0$ has multiplicity $1$.
Then for Lebesgue-a.e.\ $x$, there is $C = C_x$ such that
the $F_\pi$-orbit of $x$ has discrepancy
$\mathfrak{D}_R \leq \mathfrak{D}^*_R \leq C_x \cdot R^{\gamma_0-1}$
for $\gamma_0 := \max\left\{ \frac{\log |\lambda_1|}{\log \lambda_0}, 0 \right\}$.
\end{theorem}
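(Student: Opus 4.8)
The plan is to reduce the discrepancy over an arbitrary subinterval to counts of visits to the cells of the nested partitions $\mathcal P_{kN,q}$, and to control those counts through two inputs: a spectral estimate coming from diagonalizability together with simplicity of $\lambda_0$, and a bounded ``desubstitution'' of each orbit window. For the spectral input I would write the (here purely diagonal) decomposition $M^s=\sum_j \lambda_j^s\, r_j\ell_j$ with $\ell_j r_k=\delta_{jk}$, so that $r_0\ell_0$ is the rank-one Perron projector and $\|M^s-\lambda_0^s r_0\ell_0\|\le C|\lambda_1|^s$; diagonalizability is exactly what rules out extra polynomial factors $s^m$ here. Applying this to the letter-count vector $\vec v(\chi^s(a))=(M^T)^s e_a$ and using $\ell_0=(1,\dots,1)$, the Perron terms cancel in the centered quantity, leaving for all $a,\ell$
\begin{equation*}
\Big|\,\vec v(\chi^s(a))_\ell-\tfrac1q|\chi^s(a)|\,\Big|=\Big|\sum_{j\ge1}\lambda_j^s(r_j)_a\big[(\ell_j)_\ell-\tfrac1q(\ell_j\mathbf 1)\big]\Big|\le C|\lambda_1|^s,
\end{equation*}
i.e.\ every complete level-$s$ block is balanced with defect $O(|\lambda_1|^s)$, while $|\chi^s(a)|\asymp\lambda_0^s$ whenever $(r_0)_a>0$, since $\mathbf 1$ is the $\lambda_0$-left eigenvector.

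For the combinatorial input, Lemma~\ref{lemma-code-orbits} codes any length-$R$ window of the orbit of $x$ by a factor $W$ of some $\chi^s(a)$, and the standard desubstitution of a factor of a substitutive word writes $W$ as a concatenation of complete blocks $\chi^t(\cdot)$, $0\le t\le s$, with at most $K_0=2\max_i|\chi(i)|$ complete blocks per level (only the two boundary blocks recurse at each level). For Lebesgue-a.e.\ $x$, ergodicity of Lebesgue (Proposition~\ref{prop:LebErg}) together with Birkhoff guarantees that $W$ is dominated by blocks of the Perron component, for which $|\chi^t(\cdot)|\asymp\lambda_0^t$ and hence $\lambda_0^s\asymp R$ (the implied constant is absorbed into $C_x$). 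Summing the per-block defects gives
\begin{equation*}
\Big|\#\{1\le j\le R:x_j\in I_\ell\}-\tfrac Rq\Big|\le K_0C\sum_{t=0}^s|\lambda_1|^t=O(R^{\gamma_0}),
\end{equation*}
so the discrepancy relative to $\mathcal P_{0,q}$, even in the well-distributed form $\mathfrak D^*_R$ since $W$ may start at any offset, is $O(R^{\gamma_0-1})$; as $|I_\ell|=1/q$ this is a genuine discrepancy bound. The restriction to a.e.\ $x$ is essential here: for an atypical $x$ the window may be dominated by blocks of a non-Perron component, whose length is only $\asymp|\lambda_{j^*}|^t$, making the defect comparable to $R$ and the discrepancy $O(1)$.

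To pass from $\mathcal P_{0,q}$ to an arbitrary interval $[a,b)$, I would use that $\{\mathcal P_{kN,q}\}_k$ are nested with branching $2^N$, so $[a,b)$ is a disjoint union of cells with at most $2(2^N-1)$ cells from each level $k$, together with an uncovered remainder of length $\le 2q^{-1}2^{-KN}$ once the refinement is truncated at a level $K$. Visits to a cell $P\in\mathcal P_{kN,q}$ equal, up to a bounded ambiguity at block boundaries that is controlled because $\chi$ is proper hence recognizable, the number of occurrences in $W$ of the level-$k$ word coding $P$, of frequency $|P|=q^{-1}\lambda_0^{-k}$. Counting these occurrences inside the complete blocks of level $t\ge k$ in the desubstitution of $W$, and applying the same spectral estimate to the sub-block counts (now governed by $M^{t-k}$), yields a per-cell defect $O\big(R^{\gamma_0}|\lambda_1|^{-k}\big)$ when $|\lambda_1|>1$. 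Summing the $O(1)$ cells over levels $k=0,\dots,K$ gives the geometric series $\sum_k R^{\gamma_0}|\lambda_1|^{-k}=O(R^{\gamma_0})$, and choosing $K\asymp\log_{\lambda_0}R$ makes both the remainder count $O(R\,2^{-KN})$ and the top-level defect $O(R^{\gamma_0}|\lambda_1|^{-K})$ of order $O(1)$; dividing by $R$ yields $\mathfrak D_R\le\mathfrak D^*_R\le C_x R^{\gamma_0-1}$.

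The main obstacle is precisely this multiscale step: one must control visits to the \emph{fine} cells, including those entered only in the interior of long excursions away from the section $L_k$, uniformly across all scales $k\lesssim\log_{\lambda_0}R$, and reconcile the bounded-per-level desubstitution and the recognizability of level-$k$ words with the balancing of the truncation level $K$. The dependence of $C_x$ on $x$ records the commensurability constant in $\lambda_0^s\asymp R$ for the particular Perron block containing the window, and is where the a.e.\ hypothesis enters through Birkhoff. Finally, in the regime $|\lambda_1|\le 1$ (so $\gamma_0=0$) all the per-block and per-cell defects are summable and the estimate is least delicate, giving the fastest decay; obtaining the clean exponent $R^{-1}$ here, rather than a bound carrying the $O(\log_{\lambda_0}R)$ number of scales, is the one point that may require a sharper telescoping of the nested-cell defects.
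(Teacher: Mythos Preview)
Your proposal follows the same three-step architecture as the paper: spectral balance for complete $\chi^s$-blocks (where the Perron terms cancel because $\ell_0=(1,\dots,1)$), bounded-per-level desubstitution of the orbit window (the paper's \eqref{eq:v}), and passage to arbitrary intervals via the nested partitions $\mathcal P_{kN,q}$. The one place you diverge is in counting visits to a level-$k$ cell: you invoke recognizability (``$\chi$ is proper hence recognizable''), but $\chi$ here is typically non-primitive, and properness alone does not guarantee recognizability in that generality, so this step would need more care. The paper sidesteps the issue entirely via the \emph{covering} hypothesis: since $\chi$ is covering, every $q2^{kN}$-adic cell $G$ equals $F_\pi^{t}(L_{k,b})$ for a \emph{unique} pair $(b,t)$ with $t$ less than the first return time of $L_{k,b}$ to $L_k$, so visits of the orbit to $G$ coincide exactly with occurrences of the letter $b$ in the level-$k$ itinerary. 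Plugging this exact count into the spectral expansion gives a per-cell defect $\le C'\,|G|\,R^{\gamma_0-1}$ with an explicit $|G|$ factor, after which summing over the cells $K_t$ of an arbitrary interval is immediate: $\sum_t C'|K_t|R^{\gamma_0-1}=C'|G|R^{\gamma_0-1}\le C'R^{\gamma_0-1}$. This dissolves the multiscale ``main obstacle'' you flag, without any appeal to recognizability. Your remark about a possible $\log R$ loss when $|\lambda_1|\le 1$ is well taken; the paper's proof does not eliminate it in the borderline case $|\lambda_1|=1$ either.
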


If $\lambda_0$ has algebraic multiplicity $\geq 2$, then the diffusion coefficient could be $1$. 
The assumption that $J$ is diagonalizable is for simplicity, mostly. However, if the Jordan block associated to $\lambda_1$ is
nontrivial, then we would need logarithmic correction 
factors as Adamczewski obtains 
for primitive substitution matrices, \cite{Ad04}.
We have no examples where $\lambda_1$ has a non-trivial
Jordan block, or $\lambda_0$ algebraic multiplicity $\geq 2$.

If $F_\pi$ is uniquely ergodic, then $C_x$ in the above theorem can be taken uniformly over all $x \in [0,1)$,
but no uniformity can be expected if there is a proper minimal subsystem, as for instance in Example~\ref{ex-5-3nue},
because there the $\mathbb{Z}$-extension of the minimal part is transient, with diffusion coefficient $1 > \gamma_0 =  0.694...$. The points in the minimal part are not Lebesgue typical, so there is no contradiction to Theorem~\ref{thm:discr1}.

\begin{proof} In this proof, we work with the rotated odometer $(I,F_\pi,\mbox{Leb})$, and so we do not have to split the middle interval of the partition $\mathcal P_{0,q}$ for $q$ odd.

Let $M = UJU^{-1}$ be the Jordan decomposition of $M$,
where $J$ is such that the eigenvalues on the diagonal
are in decreasing order, i.e., $\lambda_0 > |\lambda_1| \geq \cdots \geq |\lambda_{q-1}|$.
Then the first row $\ell_0$ of $U^{-1}$, which is the left eigenvector
associated to $\lambda_0 = 2^N$, is constant, because $\chi$ is covering.
Assume it is scaled so that $(1, \dots, 1)$ is this row.
Then for every symbol $a \in \mathcal{A} = \{0,\ldots, q-1\}$, the abelianization of
$\chi^n(a)$ is
\begin{align}\label{eq-vn}
V_n := (0, \dots,\underbrace{1}_{\text{position } a}, \dots, 0) UJ^nU^{-1}
= \underbrace{U_a}_{a\text{-th row}} J^n U^{-1}
= \sum_{j=0}^{q-1} U_{a,j} \lambda_j^n \ell_j,
\end{align}
where the last equality follows because $J$ is a diagonal matrix.
The length of the substitution word obtained from $a \in \mathcal{A}$ after $n$-th application of the substitution rule is
\begin{eqnarray}\label{eq:chin}
|\chi^n(a)| &=& \left\| \sum_{j=0}^{q-1} U_{a,j} \lambda_j^n \ell_j \right\|_1 \\
&=& \left\| U_{a,0} q2^{nN} \left(1 + \sum_{j=1}^{q-1} \frac{U_{a,j}}{U_{a,0}}\left( \frac{\lambda_j}{\lambda_0} \right)^n
\frac{ \ell_j }{q} \right)  \right\|_1 \sim \left| U_{a,0} q2^{nN} \right|, \nonumber
\end{eqnarray}
where at the last step we use that $\lambda_0 = 2^{N} > |\lambda_j|$, $1 \leq j \leq q-1$, and so all summands in the sum in the brackets are negligible for $n$ large.

 We now compute $\mathfrak{D}_R^*$, see \eqref{eq-stardiscr}, for the orbit of a Lebesgue typical point in $I$. 
 
 Recall that, for $k \geq 0$, we have the partition $\mathcal P_{kN,q}$ of the unit interval into half-open subintervals of length $(q2^{kN})^{-1}$, where $N$ is defined in \eqref{eq-defnN}. We call an interval $G$ a \emph{$q2^{kN}$-adic interval} if $G \in \mathcal P_{kN,q}$. Recall that for each $k \geq 1$,  $L_{k}$ is the union of the first $q$ subintervals in the partition $\mathcal P_{kN,q}$. Denote by $L_{k,a}$, $0 \leq a \leq q-1$, these subintervals.
 
 Recall from Proposition~\ref{lem:periodicpermutations-1} that, associated to the rotated odometer $F_\pi$, there is a sequence of substitutions $(\chi_j)_{j \geq 1}$ with alphabet $\mathcal{A}$. Namely, the sets $L_{k,a}$, $0 \leq a \leq q-1$, form a coding partition $\mathcal P_{kN,q}^{cod}$, which codes the orbits of the $q2^{(k+1)N}$-adic intervals in the partition $\mathcal P_{(k+1)N,q}^{cod}$. That is, one defines a substitution $\chi_{k+1}: \mathcal{A} \to \mathcal{A}^*$, where the substitution word $\chi_{k+1}(a)$ records the order in which the sets in $\mathcal P_{kN,q}^{cod}$ are visited by the orbit of $L_{k+1,a}$ under the first return map $F_{\pi,k}: L_k \to L_k$. For $i \geq 1$, the composition $\chi_{k+i} \circ \cdots \circ \chi_{k+1} (a)$ codes the orbit of $L_{k+i,a}$ with respect to $\mathcal P_{kN,q}$, and the orbit of $L_{k+i,a}$ with respect to $\mathcal P_{0,q}$ is coded by the composition $\chi_{k+i} \circ \cdots \circ \chi_1$ of substitutions.

The assumption of the theorem is that $(\chi_i)_{i \geq 1}$ is stationary, i.e., there is a substitution $\chi: \mathcal{A} \to \mathcal{A}^*$ such that $\chi_i = \chi$ for all $i \geq 1$. Then, given a $q2^{(k+n)N}$-adic interval $L_{k+n,a}$, $0 \leq a \leq q-1$, the coding of its orbit relative to the partition $\mathcal P_{kN,q}$ is given by the substitution word $\chi^n(a)$. 

We will compute \eqref{eq-stardiscr} in two steps: first for any $q2^{kN}$-adic interval $G$, and then for an interval of arbitrary length.

So let $G$ be a $q2^{kN}$-adic interval, let $n \geq 1$ and let 
$y \in L_{k+n,a}$,
$a \in \mathcal{A}$, be a Lebesgue typical point, i.e., the orbit ${\rm orb}(y)$ returns to $L_{k}$ infinitely often.
The easiest case is when $G \in \mathcal P_{kN,q}$, then the number of visits of the orbit of $y$ of length $R = |\chi^{k+n}(a)|$ to $G$ is bounded above by $|\chi^n(a)|$. Note that this bound is not sharp: indeed, $|\chi^n(a)|$ counts visits to any $q2^{kN}$-adic interval in $L_k$, not just $G$. Under assumptions of the theorem, this bound is true for \emph{any} $q2^{kN}$-adic interval $G$ in $\mathcal P_{kN,q}$: indeed, since $\chi$ is covering, there exists a unique $a \in \mathcal{A}$ such that $G = F_\pi^t(L_{k,a})$ for some $t \in \mathbb N$. By construction, the number $t$ is less than the first return time of $L_{k,a}$ to $L_k$, and $G$ appears in the orbit of $L_{k,a}$, before it returns to $L_k$, only once. Then ${\rm orb}(y)$ visits $L_{k,a}$ if and only if it visits $G$, which implies that the bound on the number of visits to $L_{k,a}$ also holds for $G$. Thus we obtain, for any $q2^{kN}$-adic interval $G$, the following estimate.

Denote by $\mathfrak{D}_R^*(y, G)$ an element of the set over which the supremum is taken in \eqref{eq-stardiscr}. Then we have
\begin{align}
 \mathfrak{D}_R^*(y,G) &:= \frac1R \#\{0 \leq j < R : F_\pi^j(y) \in G \} - |G|  \leq   \frac{|\chi^{n}(a)|}{|\chi^{k+n}(a)|} - |G| \nonumber
\\ \label{eq12}
&=  \frac{1}{|\chi^{k+n}(a)|} \left( U_{a,0} \lambda_0^n
+ \sum_{j=1}^{q-1} U_{a,j} |\lambda_j|^n \| \ell_j \|_1 \right) -
\frac{1}{q2^{kN}} \\
&= \left( \frac{ U_{a,0} \lambda_0^n
+ \sum_{j=1}^{q-1} U_{a,j} |\lambda_j|^n \| \ell_j \|_1}
{U_{a,0} \lambda_0^{k+n}\nonumber
+ \sum_{j=1}^{q-1} U_{a,j} |\lambda_j|^{k+n} \| \ell_j \|_1}\right)  -
\frac{1}{q2^{kN}} \\
&= \frac{U_{a,0} \lambda_0^n}{q U_{a,0}  \lambda_0^{k+n}}
\left( \frac{1+ \sum_{j=1}^{q-1} \frac{U_{a,j}}{U_{a,0}} \left(\frac{|\lambda_j|}{\lambda_0}\right)^n \| \ell_j \|_1 }
{1+ \sum_{j=1}^{q-1} \frac{U_{a,j}}{U_{a,0}} \left(\frac{|\lambda_j|}{\lambda_0}\right)^{k+n} \frac{\| \ell_j \|_1}{q} } \right)
 - \frac{1}{q2^{kN}} \nonumber \\
 &\sim \frac{1}{q 2^{kN}}
\left( 1 + \sum_{j=1}^{q-1} \frac{U_{a,j}}{U_{a,0}} \left(\frac{|\lambda_j|}{\lambda_0}\right)^n \| \ell_j \|_1
- \sum_{j=1}^{q-1} \frac{U_{a,j}}{U_{a,0}} \left(\frac{|\lambda_j|}{\lambda_0}\right)^{k+n} \frac{\| \ell_j \|_1}{q}  \right) \label{eq15}
 - \frac{1}{q2^{kN}}  \\
&\leq C |G| \left(\frac{|\lambda_1|}{\lambda_0}\right)^n
\leq C' \,|G| \, R^{\gamma_0 - 1} < C' R^{\gamma_0 -1}, \label{eq16}
\end{align}
for some $C, C' > 0$ independent of $a$.
Here we use \eqref{eq-vn} to obtain \eqref{eq12}, the approximation $\frac{1}{1+x} \sim 1-x$ for small $x$ and the fact that $\left(\frac{|\lambda_j|}{\lambda_0}\right)^n \to_n 0$ to obtain \eqref{eq15}, and the fact that $\lambda_1$ dominates the eigenvalues $\lambda_j$, $j = 2,\ldots,q-1$, in the absolute value, to obtain \eqref{eq16}.

Now consider the orbit of arbitrary length $R$. As $G$ is $q2^{kN}$-adic, it suffices to take
$R \geq \max_{a \in \mathcal{A}} |\chi^k(a)|$. 

Let $u = u_1\dots u_R$ be a subword of 
the infinite sequence in $X_\chi$ which codes the orbit of a Lebesgue typical point $y \in I$ as in Lemma~\ref{lemma-code-orbits}, i.e., $u_\ell \in \mathcal{A}$ for each $1 \leq \ell \leq R$. This corresponds to an orbit of length $R$ for a point $y' \in \operatorname{orb}(y)$. 
By abuse of notation, and to distinguish with the case of orbits of specific length above, denote
$\mathfrak{D}_R^*(u): = \frac1R \#\{ 1 \leq j \leq R : F_\pi(y) \in G\}
- |G|$.

Take $n \geq 0$ maximal such that there is a letter $a \in \mathcal{A}$ such that $\chi^{k+n}(a)$ is a subword of $u$.
Once this $n$ has been found, we can take
a block $B_0 \in \mathcal{A}^*$ maximal such that
$\chi^{k+n}(B_0)$ is a subword of $u$. 

Next find the $s \in \mathbb{N}$  minimal and blocks $B_{\pm 1}, B_{\pm 2}, \dots, B_{\pm s}\in \mathcal{A}^*$
maximal such that $u$ is a subword of
\begin{equation}\label{eq:v}
v := \chi^{k+n-s}(B_{-s}) 
\cdots \chi^{k+n-1}(B_{-1})
\chi^{k+n}(B_{0})
\chi^{k+n-1}(B_{1})
\cdots 
\chi^{k+n-s}(B_{s}).
\end{equation}
Here we allow the blocks $B_{\pm i}$, $1\leq i \leq s$, to be empty, for the sake of convenience of notation, so that we could have a formula symmetric in $s$. Also, we choose $B_{\pm i}$ recursively, i.e.,$B_{\pm i}$ can only be chosen after $B_{\pm (i-1)}$ maximal are found, for $1 \leq i \leq s$.

This procedure ensures that the lengths $b_i := |\chi^{k+n-|i|}(B_i)|$, $1 \leq |i| \leq s$, are bounded. Indeed, note that the lengths $|B_{\pm i}|$ are bounded above by $H = \max \{|\chi(a)| : a \in \mathcal{A}\}$, the length of the longest word of the substitution. Indeed, let $B_{-i}$ be the first non-empty block such that $\chi^{k+n-i}(B_{-i})$ is adjacent to $\chi^{k+n}(B_0)$ on the left. Then $\chi^{k+n-i}(B_{-i})$ is contained in a longer substitution word $\chi^{k+n}(a)$, for some $a \in \mathcal{A}$, and so the block $B_{-i}$ must be a proper subword of the substitution word $\chi^i(a)$. Moreover, if $c \in \mathcal{A}$ is the rightmost letter in the subword $\chi^{i-1}(a)$, then $B_{-i}$ is a proper subword of $\chi(c)$, since otherwise we would have that $\chi^{k+n-i+1}(c)$ is a subword of $u$ adjacent to $\chi^{k+n}(B_0)$ on the left. This means that either $B_{-i+1} = c$ (if $i \geq 2$), or the block $B_0$ can be increased by adding $c$ on the left. Both contradict the maximality of $B_{\pm i}$, for $0 \leq i \leq s$. The argument proceeds by induction on $i$. Next, since there is only a finite number of options for $B_i$, the lengths $b_i$ are bounded. Namely, $b_i < \max\{ |\chi^{k+n - |i| +1}(a)| : a \in \mathcal A\}$.

Then there is $C >0$ and $\beta \in (0,1)$, such that, independently of
$n$ and $s$, we have $b_i \leq C \beta^{|i|} b_0$
for all $-s \leq i \leq s$.
In particular,
\begin{equation*}
 |u| \leq |v| \leq \frac{2C}{1-\beta} b_0.
\end{equation*}

The estimate \eqref{eq16} applies to $b_i$, $-s \leq i \leq s$,  so there exists $C' \geq 1$ such that
$$
b_i \mathfrak{D}^*_{b_i}(\chi^{k+n-|i|}(B_i))
\leq C' b_i^{\gamma_0} \leq C' C^{\gamma_0} \beta^{\gamma_0 |i|} b_0^{\gamma_0}.
$$
Hence
$$
\mathfrak{D}^*_R(u) \leq \frac{1}{R} \sum_{i=-s}^s b_i \mathfrak{D}^*_{b_i}(\chi^{k+n-|i|}(B_i))
\leq \frac{1}{R} \frac{2C'C^{\gamma_0}}{1-\beta^{\gamma_0}} b_0^{\gamma_0}
\leq \frac{2C'C^{\gamma_0}}{1-\beta^{\gamma_0}}R^{\gamma_0-1},
$$
which proves the theorem for any $q2^{kN}$-adic interval, for $k \geq 1$.

Now, let $G$ be any interval in $I$, and let $R = |\chi^{k+n}(a)|$, for $n \geq 1$ and $a \in \mathcal{A}$. Note that $G$ can be represented as a (possibly infinite) union of $q2^{kN}$-adic intervals, i.e., there are non-intersecting intervals $K_1,K_2,\ldots$, such that $G = \bigcup_{t \geq 1} K_t$. Each $K_t$ has length $c_t = (q2^{k_tN})^{-1}$ for some $k_t \in \mathbb N$, and $|G| = \sum_{t \geq 1} c_t$. Then using \eqref{eq16}  we obtain
  \begin{align}\label{eq-arbitraryG}
  \mathfrak{D}_R^*(y,G) & = \frac{1}{R} \left(\{0 \leq j \leq R : F^j_\pi(y) \in G\} - R |G| \right) \nonumber \\ 
  & = \frac{1}{R}  \sum_{t \geq 1} \left( \{ 0 \leq j \leq R : F^j_\pi(y) \in K_t\} - R c_t \right) = \frac{1}{R} \sum_{t \geq 1} R \, \mathfrak{D}_R^*(y, K_t) \nonumber \\ 
  & \leq \sum_{t \geq 1} C' c_t R^{\gamma_0 -1}  = C' |G| R^{\gamma_0 - 1} \leq C' R^{\gamma_0 - 1},
  \end{align}
since $|G| \leq 1$. The proof for arbitrary $R >0$ proceeds similarly to the case of the $q2^{kN}$-adic interval $G$, with \eqref{eq-arbitraryG} instead of \eqref{eq16}.  This concludes the proof of the theorem.
\end{proof}

\subsection{Diffusion coefficient}\label{sec:diff}

If the cocycle $\Psi$ defined by the skew-function $\psi$ is a coboundary, then all orbits are bounded, and hence the diffusion coefficient is $0$. For the other case, we compute the upper bound on the diffusion coefficient in Theorem~\ref{thm-diffusion}. Recall that we assume that the largest eigenvalue $\lambda_0$ has multiplicity $1$, i.e., we assume that $\lambda_0 > |\lambda_1| \geq \cdots \geq |\lambda_q|$. We also assume for the second largest eigenvalue that $|\lambda_1|>1$. This holds, in particular, if the matrix $M$ of the substitution $\chi$ has at least two non-zero diagonal blocks in the Frobenius form, see Section~\ref{matrix-measures}. Theorem~\ref{thm:dif} below is a restatement of Theorem~\ref{thm-diffusion} of the introduction, added here for the convenience of the reader.

\begin{theorem}\label{thm:dif}
 If the substitution $\chi$ is covering and stationary, the associated matrix $M$ is diagonalizable, the largest eigenvalue $\lambda_0$ has multiplicity $1$, and the second largest eigenvalue satisfies $|\lambda_1| >1$, then
 for the corresponding $\mathbb{Z}$-extension, the diffusion coefficients satisfy 
$$
\gamma(x) \leq \gamma_0 := \frac{\log |\lambda_1|}{\log \lambda_0} \quad \text{ for Leb-a.e. } x.
$$
\end{theorem}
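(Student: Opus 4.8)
The plan is to reduce the diffusion coefficient to the growth rate of the ergodic sums $S_n\psi(x) = \sum_{i=0}^{n-1}\psi(F_\pi^i(x))$ and then to estimate these sums by the weights of the coding words, exactly in parallel with the proof of Theorem~\ref{thm:discr1}. Since $\xi(T_\pi^n(x,k)) - k = \Psi(x,n) = S_n\psi(x)$, definition \eqref{eq-diff-defn} reads (interpreting the displacement in absolute value) $\gamma(x) = \limsup_n \frac{\log|S_n\psi(x)|}{\log n}$, so it suffices to bound $|S_n\psi(x)|$ whenever the orbit segment of length $n$ is coded, in the doubled alphabet $\mathcal{A}'$, by a word sitting in a concatenation of substitution blocks of top level $m$, together with the matching lower bound $n \gtrsim \lambda_0^{m}$. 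Working in the doubled alphabet is essential here: for odd $q$ the partition $\mathcal P_{0,q}$ does not resolve the discontinuity of $\psi$ at $\tfrac12$, whereas splitting the middle symbol as in Section~\ref{sec:doub} makes $S_n\psi(x)$ equal to the weight $\psi(u)$ of the coding word $u$ provided by Lemma~\ref{lemma-code-orbits}, with $\psi$ as in Definition~\ref{defn-weights-words}.

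The first key step computes the weight of an iterated substitution word. As in \eqref{eq-vn}, the abelianisation of $\chi^m(a)$ is $\sum_{j} U_{a,j}\lambda_j^m \ell_j$, so pairing with the weight column vector $w = (1,\dots,1,-1,\dots,-1)^T$ of Definition~\ref{defn-weights-words} gives $\psi(\chi^m(a)) = \sum_{j} U_{a,j}\lambda_j^m\,\psi(\ell_j)$. The crucial observation is that $\psi(\ell_0) = \ell_0 \cdot w = 0$: the leading left eigenvector $\ell_0 = (1,\dots,1,\tfrac12,\tfrac12,1,\dots,1)$ pairs to zero with $w$ because $\psi$ takes the values $\pm1$ on the two halves of equal length $[0,\tfrac12)$ and $[\tfrac12,1)$ (equivalently $\int_I\psi\,d\mbox{Leb} = 0$, the recurrence condition of Atkinson). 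Hence the dominant $\lambda_0^m$ term drops out and, since $|\lambda_j| \le |\lambda_1|$ for $j \ge 1$,
\begin{equation*}
|\psi(\chi^m(a))| \le \Big(\sum_{j \ge 1}|U_{a,j}|\,|\psi(\ell_j)|\Big)\,|\lambda_1|^m \le C\,|\lambda_1|^m
\end{equation*}
uniformly in $a$; this is where diagonalizability and simplicity of $\lambda_0$ are used.

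To pass from full substitution words to an arbitrary orbit segment I would reuse verbatim the block decomposition \eqref{eq:v} of the coding word $u$ built in the proof of Theorem~\ref{thm:discr1}: $u$ sits inside $v = \chi^{k+n-s}(B_{-s})\cdots\chi^{k+n}(B_0)\cdots\chi^{k+n-s}(B_s)$ with $|B_i| \le H$. Because the weight is exactly additive over concatenation, the estimate above yields
\begin{equation*}
|\psi(v)| \le \sum_{i=-s}^{s} H\,C\,|\lambda_1|^{k+n-|i|}
= H\,C\,|\lambda_1|^{k+n}\sum_{i=-s}^{s}|\lambda_1|^{-|i|} \le C'\,|\lambda_1|^{k+n},
\end{equation*}
where the geometric series converges precisely because $|\lambda_1| > 1$ — the single place this hypothesis enters, and what makes the top level $\chi^{k+n}(B_0)$ dominate rather than the many short low-level blocks. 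Recalling from \eqref{eq:chin} that $b_0 = |\chi^{k+n}(B_0)| \asymp \lambda_0^{k+n}$ and that $|u| \le |v| \le \frac{2C}{1-\beta}b_0$, we get $R := |u| \asymp \lambda_0^{k+n}$, so $\log R = (k+n)\log\lambda_0 + O(1)$, and therefore
\begin{equation*}
\frac{\log|S_R\psi(x)|}{\log R}
\le \frac{(k+n)\log|\lambda_1| + O(1)}{(k+n)\log\lambda_0 + O(1)} \xrightarrow[n\to\infty]{} \frac{\log|\lambda_1|}{\log\lambda_0} = \gamma_0 .
\end{equation*}
Taking $\limsup$ over $R$ for Lebesgue-typical $x$ (whose orbit returns to $L_{k}$ infinitely often) then gives $\gamma(x) \le \gamma_0$ a.e.

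The main obstacle I anticipate is the control of the two boundary pieces: $u$ is only a subword of $v$, so $\psi(u)$ differs from $\psi(v)$ by the weights of a prefix and a suffix of the outermost blocks $\chi^{k+n-s}(B_{\pm s})$, and bounding these by their length would only give the useless estimate $O(\lambda_0^{k+n})$. To handle them I would invoke the hierarchical structure of substitution words: any prefix (or suffix) of $\chi^{m}(a)$ decomposes into at most $O(1)$ full lower-level blocks $\chi^{j}(b_j)$ per level $0 \le j \le m$ with $|b_j| \le H$, so the same geometric summation in $|\lambda_1|$ bounds its weight by $C''|\lambda_1|^{m} \le C''|\lambda_1|^{k+n}$; folding this into the estimate leaves the conclusion unchanged. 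A minor point to verify, exactly as in the discrepancy proof, is that if in addition $\psi(\ell_1) = 0$ the bound only improves (the governing eigenvalue becomes the largest $|\lambda_j|$ with $\psi(\ell_j)\ne 0$), so the inequality $\gamma(x) \le \gamma_0$ persists in all cases.
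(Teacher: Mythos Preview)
Your proposal is correct and follows essentially the same route as the paper: reduce to bounding $|\psi(W)|$ for the coding word, use $\psi(\ell_0)=0$ to kill the $\lambda_0^m$ term, bound each block by $|\lambda_1|^{\text{level}}$, and sum the resulting geometric series (convergent because $|\lambda_1|>1$). The one difference worth noting is that the paper avoids the boundary obstacle you flag by using a sharper decomposition from the outset: instead of the covering word $v$ of \eqref{eq:v}, it takes the decomposition all the way down to level~$0$,
\[
W \;=\; B_{-s}\,\chi(B_{-s+1})\cdots\chi^{s-1}(B_{-1})\,\chi^{s}(B_0)\,\chi^{s-1}(B_1)\cdots\chi(B_{s-1})\,B_s,
\]
so that $W$ is \emph{equal} to this concatenation rather than merely a subword. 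Your proposed fix---decomposing the dangling prefix and suffix of $\chi^{k+n-s}(B_{\pm s})$ into $O(1)$ full blocks per level down to single letters---is exactly what produces this exact decomposition, so you end up in the same place; the paper just starts there.
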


The assumptions that $M$ is diagonalizable and that $|\lambda| \neq 1$
are not essential. Without them, there may appear extra lorarithmic terms in the diffusion.

\begin{proof}
In this proof, we work with the skew-product dynamical system \eqref{eq-skewprod} over a rotated odometer $(I,F_\pi,\mbox{Leb})$. Therefore, if $q$ is odd, we split the middle state $m := \lfloor q/2\rfloor$ in two states $m^+$ and $m^-$, according to the value of the skew-function, see Section~\ref{sec:doub} for details.
The substitution and the associated matrix are still called $\chi$ and $M$. Recall that, as a consequence of the Perron-Frobenius Theorem, the left eigenvector $\ell_0 = (1,\ldots, 1, \frac{1}{2}, \frac{1}{2}, 1,\ldots, 1)$, where $\frac{1}{2}$ correspond to the split symbol.

 Let $\ell_j$, $j = 1, \dots, q-1$, be the left eigenvectors associated to eigenvalues $\lambda_j$. 
 
 We note that, in the definition of the diffusion coefficient \eqref{eq-diff-defn} by \eqref{eq-skewprod-general-group} we have for $x \in I$
   $$\xi(T^n_\pi(x,k)) - k = \xi (F^n_\pi(x),k+\Psi(x,n)) - k = \Psi(x,n) = \sum_{i = 0}^{n-1} \psi(F^i_\pi (x)),$$ 
   where $\Psi$ is the cocycle associated to the skew-function $\psi$. Using the coding of orbits, the piece $\{x, F_\pi(x), \dots , F_\pi^{n-1}(x)\}$ is determined by a
 substitution word $W = w_1\dots w_n$, and the corresponding displacement in the skew-product system is given by
 $$
 \psi(W)  := \sum_{i=1}^n \psi(w_i),
 $$
 where we compute the weights $\psi(w_i)$ of symbols as in Definition~\ref{defn-weights-words}.
 
Now we use a similar construction for $W$ as was applied in the proof of Theorem~\ref{thm:discr} to the word $u$. Namely, take $s \geq 0$ maximal such that there is a letter $a \in \mathcal{A}$ such that $\chi^{s}(a)$ is a subword of $W$.
Once such an $s$ has been found, we can take
a block $B_0 \in \mathcal{A}^*$ maximal such that
$\chi^{s}(B_0)$ is a subword of $W$. Next find the blocks $B_{\pm 1}, B_{\pm 2}, \dots, B_{\pm s}\in \mathcal{A}^*$, possibly empty, such that
\begin{equation}\label{eq:w}
W := B_{-s} \chi(B_{-s+1}) 
\cdots \chi^{s-1}(B_{-1})
\chi^{s}(B_{0})
\chi^{s-1}(B_{1})
\cdots 
\chi(B_{s-1})B_{s}.
\end{equation}

 Let $B_i = b_{i,1} \cdots b_{i,t_i}$ for $-s \leq i \leq s$, and note that 
 $t_i < H = \max \{|\chi(a)| : a \in \mathcal{A}\}$ by the argument in the paragraph below the paragraph containing formula \eqref{eq:v}.

 Let $\vec u_i = (u_{i,0}, \dots, u_{i,q})$ be the row vector with
 $u_{i,a} = \#\{ 1 \leq k \leq t_i : b_{i,k} = a\}$, $a \in \mathcal{A}$, i.e., the $a$-th components of $\vec u_i$ is the number of occurrences of the letter $a \in \mathcal{A}$ in the block $B_i$. Take coefficients $c_{i,j} \in \mathbb{R}$ such that
$\vec u_i = \sum_{j=0}^{q} c_{i,j} \ell_j$, and note that, since there is a finite number of subwords of length at most $H$ in the alphabet $\mathcal{A}$,  $E_j = \sup_{i} |c_{i,j}| < \infty$, $0 \leq j \leq q$, is independent of $W$. 

Given a vector $\vec u_i$, we can compute its weight as the inner product  $\cdot$ of $\vec u_i$ with the vector $(1, \dots, 1, -1 \dots, -1)$, which assigns weights to the states of the partition $\mathcal{P}_{0,q}$ (with the middle interval split into $2$ for $q$ odd), with the middle interval split in the case of odd $q$; in particular, we have 
 $$ \psi(\ell_0) = \ell_0 \cdot (1, \dots, 1, -1 \dots -1) = 0$$ 
 for the left eigenvector associated to the largest eigenvalue $\lambda_0$. Next, we estimate the absolute value of the weight of the piece of the orbit corresponding to each block $B_i$, that is,
 \begin{eqnarray*}
|\psi(\chi^{s-|i|}(B_i)) |&=& \left| \left( \vec u_i \, M^{s-|i|} \right)\cdot (1, \dots, 1, -1 \dots, -1) \right|\\
&=& \left| \left(\sum_{j=0}^{q} c_{i,j} \lambda_j^{s-|i|} \ell_j \right) \cdot (1, \dots, 1, -1 \dots -1) \right| \leq E |\lambda_1|^{s-|i|},
\end{eqnarray*}
because $|\lambda_1| \geq |\lambda_j|$ for $2 \leq j \leq q$, and $\psi(\ell_0) = 0$. Here the constant $E  \geq \sup \{E_j |\psi(\ell_j)| : 1\leq j \leq q\}$.

Then, using \eqref{eq:w} and by assumption $|\lambda_1| > 1$ we obtain
$$
|\psi(W)| \leq \sum_{i=-s}^s |\psi(\chi^{s-|i|}(B_i))|
\leq E \sum_{i=-s}^s  |\lambda_1|^{s-|i|}
\leq \frac{2E}{1-1/|\lambda_1|}|\lambda_1|^s.
$$
Note that for $|B_0| = \|\vec{u_0} \|_1= \| \sum_{j=0}^{q} c_{0,j} \ell_j \|_1$, a computation similar to \eqref{eq:chin} shows that 
 $$|\chi^s(B_0)| \sim |c_{0,0} q \lambda_0^s|,$$ 
 so, in particular, since $|\chi^s(B_0)| $ increases with $s$, $|c_{0,0}| \ne 0$. Since there is at most a finite number of words of length at most $H$, there exists a constant $E' > 0$ such that 
  $$|W| \geq |\chi^s (B_0)|\geq E' |c_{0,0}| \lambda_0^s.$$ 
 Taking into account that $\lambda_0^{\gamma_0} = |\lambda_1|$, we obtain
$$
|\psi(W)| \leq \frac{2E}{{E'}^{\gamma_0}}\frac{|\lambda_1|}{|\lambda_1|-1}  \frac{1}{|c_{0,0}|^{\gamma_0}} \ |W|^{\gamma_0},
$$
as claimed.
\end{proof}

\begin{remark}
If a substitution is not covering, but $\psi(\ell_\mu) = 0$, where $\mu$ is an ergodic measure, and $\ell_\mu$ is the left eigenvector associated with the largest (in absolute value) eigenvalue $\ell_\mu$, then the same proof applies. If a rotated odometer is not uniquely ergodic, then by the discussion in Section~\ref{matrix-measures}, the substitution $M$ can be put into the Frobenius form $M_F$, where the first diagonal block $D_1$ corresponds to the minimal subsystem. Let $\mu$ be the measure supported on the minimal subset of $(I,F_\pi)$, and let $\lambda_\mu$ and $\ell_\mu$ denote the leading eigenvalue of $D_1$ and its corresponding left eigenvector. If $\psi(\ell_\mu) = 0$, then a proof similar to that of Theorem~\ref{thm:dif} applies, and one obtains that the diffusion coefficient is bounded above by $\max \left\{\frac{\log |\lambda_\mu'|}{\log |\lambda_\mu|}\right\}$, where $\lambda_\mu'$ is the second largest eigenvalue of $D_1$. However, so far we haven't found any example where both $D_1 \neq M$ and $\psi(\ell_\mu) = 0$.
 Instead, in all our examples that are not uniquely ergodic,
 the minimal subsystem becomes transient on the skew-product,
 with diffusion coefficient $\gamma = 1$, because
 $$
0 <  \frac1C |\psi(\ell_\mu)| \leq \frac{|\xi(T_\pi^n(x,k)) - k|}{n}
 \leq C  |\psi(\ell_\mu)| < \infty
 \qquad \mu\text{-a.e.,}
 $$
 for $C = \frac{ \max\{ U_{a,\mu}\, :\, a \in \mathcal{A}\} }{ \min\{ U_{a,\mu}\, :\, U_{a,\mu} > 0\} }$, and where $\xi: I \times \mathbb{Z} \to \mathbb{Z}$ is the projection.
\end{remark}

\section{Recurrence and non-ergodicity of skew-products}\label{sec-essentialvalues}

In this section, we study the ergodic properties of
the skew-product \eqref{eq-skewprod}. Recall from Section~\ref{sec:rotated} that a rotated odometer $(I,F_\pi)$, where $\pi$ is a permutation of $q$ symbols, admits at most $q$ invariant ergodic measures, and Lebesgue measure $\mbox{Leb}$ on $I$ is ergodic if and only if $(I,F_\pi)$ is covering, that is, $F_\pi: I \to I$ has no periodic points. In this section, we concentrate mostly on the study of skew-products on $I \times \mathbb Z$ with respect to the invariant measure
 $\mbox{Leb} \otimes \nu$ (where $\nu$ is counting measure on $\mathbb{Z}$), using theory of essential values, see Section~\ref{sec-essential} or \cite{M04,S77}.

\subsection{Recurrence}\label{sec:recur}
We assume that a rotated odometer $(I,F_\pi)$ is covering, that is, $\mbox{Leb}$ is an ergodic invariant measure. The following result goes back at least to Atkinson \cite{At1976}, but is used also in 
\cite{HW, CR2019}.

\begin{theorem}\label{Atkinson}
 Let $(X,\mathcal{B},\mu,F)$ be an ergodic transformation on a probability space.
 Then the skew-function $\psi:X \to \mathbb{Z}$ has integral
 $\int_X \psi \, d\mu = 0$ if and only if the associated cocycle $\Psi$ is recurrent, i.e.,
 $\Psi(x,n) = \sum_{j=0}^{n-1} \psi \circ F^j(x) = 0$ infinitely often, for $\mu$-a.e., $x \in X$.
\end{theorem}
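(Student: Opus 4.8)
The plan is to prove the two implications separately, using the Birkhoff ergodic theorem for the easy direction and a conservativity argument for the harder one. For the direction ``$\Psi$ recurrent $\Rightarrow \int_X\psi\,d\mu=0$'' I would argue by contraposition: suppose $\int_X\psi\,d\mu=c\neq 0$. Since $F$ is ergodic, the Birkhoff ergodic theorem gives $\frac1n S_n\psi(x)\to c$ for $\mu$-a.e.\ $x$, so $|S_n\psi(x)|\to\infty$ and $S_n\psi(x)=0$ for only finitely many $n$. Hence $\Psi$ is not recurrent, which is exactly the contrapositive of the desired implication.

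For the converse, assume $\int_X\psi\,d\mu=0$. I would reformulate recurrence of $\Psi$ as conservativity of the skew-product $T$ on $(X\times\mathbb{Z},\,\mu\otimes\nu)$, where $\nu$ is counting measure. Since $T^n(x,k)=(F^n(x),\,k+S_n\psi(x))$, the cocycle returns to $0$ infinitely often for a.e.\ $x$ precisely when a.e.\ point of $X\times\{0\}$ returns to $X\times\{0\}$ infinitely often under $T$. Because $\nu$ is counting measure, $X\times\{0\}$ has positive finite measure, so by the Halmos recurrence theorem it suffices to show that $T$ is \emph{conservative}, that is, that $T$ admits no wandering set of positive measure.

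The heart of the proof is this conservativity estimate. Suppose for contradiction that $W\subset X\times[-R,R]$ is wandering with $\mu\otimes\nu(W)>0$ for some $R$; since $F$ is a.e.\ invertible in our setting (otherwise one passes to the natural extension), the forward images $T^nW$, $n\geq 0$, are pairwise disjoint. Birkhoff's theorem applied to $\psi$ gives $S_n\psi(x)=o(n)$ a.e., so for any $\delta>0$ Egorov's theorem yields a set $G$ with $\mu(G)>1-\eta$ and a constant $C$ such that $|S_n\psi(x)|\leq \delta n+C$ for all $n\geq 0$ and all $x\in G$. Writing $W'=W\cap(G\times[-R,R])$, the disjoint union $\bigsqcup_{n=0}^{N-1}T^nW'$ has measure $N\cdot\mu\otimes\nu(W')$, yet it is contained in $X\times[-(R+\delta N+C),\,R+\delta N+C]$, of measure at most $2(R+\delta N+C)+1$. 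Dividing by $N$ and letting $N\to\infty$ forces $\mu\otimes\nu(W')\leq 2\delta$, and since $\delta>0$ is arbitrary while $\mu\otimes\nu(W\setminus W')\leq (2R+1)\,\mu(X\setminus G)\to 0$ as $\eta\to 0$, we obtain $\mu\otimes\nu(W)=0$, a contradiction. Thus $T$ is conservative, and $S_n\psi(x)=0$ infinitely often for $\mu$-a.e.\ $x$.

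I expect the conservativity estimate to be the main obstacle: the delicate point is to combine the sublinear growth $S_n\psi=o(n)$ from Birkhoff with Egorov's theorem so as to trap the first $N$ disjoint images of the wandering set inside a fibre-strip whose measure grows only like $\delta N$, making disjointness incompatible with $\mu\otimes\nu(W)>0$. Two minor technicalities remain: the non-invertibility of $F$ at the single point $0$, which is handled by restricting to $I_{np}$ or by passing to the natural extension; and the fact that, since $\psi$ is integer-valued, the a.e.\ statement $\liminf_n|S_n\psi|=0$ coming from conservativity is equivalent to $S_n\psi=0$ infinitely often, as claimed.
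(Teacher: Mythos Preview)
The paper does not give its own proof of this statement: Theorem~\ref{Atkinson} is quoted as a known result of Atkinson~\cite{At1976} (and also used in \cite{HW,CR2019}), and is then applied as a black box in Lemmas~\ref{lemma-lebrecurrent} and~\ref{lemma-lebrecurrent1}. So there is no ``paper's proof'' to compare against.

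That said, your argument is correct and is essentially the standard proof of Atkinson's theorem. The easy direction via Birkhoff is exactly right. For the hard direction, your conservativity argument is the classical one: the combination of $S_n\psi = o(n)$ (Birkhoff) with Egorov to obtain uniform sublinear growth on a large set, and then the pigeonhole estimate trapping $N$ disjoint images of the putative wandering set inside a strip of measure $O(\delta N)$, is precisely how Atkinson's result is usually established. Your handling of the two technicalities (non-invertibility via the natural extension, and the passage from $\liminf|S_n\psi|=0$ to $S_n\psi=0$ infinitely often via integer-valuedness) is also fine. One small point worth making explicit: the reduction to a wandering set contained in some $X\times[-R,R]$ is justified because any wandering set of positive $\mu\otimes\nu$-measure meets some finite strip in positive measure, and a measurable subset of a wandering set is wandering.
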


In comparison, for $\mathbb{Z}^2$-extensions, as, for instance, in the wind-tree model, Atkinson's result fails. Nonetheless,
Avila \& Hubert \cite{AH20} show that for typical (although not all) direction of the flow in a wind-tree model, the motion is recurrent.

As we are working with a $\mathbb{Z}$-extension, Atkinson's result is applicable and we obtain the following corollary.
Recall that $\nu$ denotes counting measure on $\mathbb{Z}$.

\begin{lemma}\label{lemma-lebrecurrent}
Let $(I,F_\pi, \mbox{Leb})$ be a rotated odometer, and let $\psi: I \to \mathbb{Z}$ be the skew-function defined in \eqref{skew-function}.
If $\chi$ is covering, then the skew-product $(I \times \mathbb{Z}, T_\pi, \mbox{Leb} \otimes \nu)$ is recurrent. 
\end{lemma}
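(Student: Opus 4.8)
The plan is to invoke Atkinson's criterion (Theorem~\ref{Atkinson}) directly, so the entire argument reduces to verifying its two hypotheses: that $(I,F_\pi,\mbox{Leb})$ is an ergodic probability-preserving system, and that the skew-function $\psi$ has zero integral. Both are immediate consequences of the setup. First I would note that the covering assumption on $\chi$ is exactly what supplies ergodicity: by Proposition~\ref{prop:LebErg}, Lebesgue measure is ergodic for $F_\pi$ precisely when all the associated substitutions (equivalently $\chi$, in the stationary case) are covering. Hence $(I,F_\pi,\mbox{Leb})$ is an ergodic transformation on a probability space, which is the standing hypothesis of Theorem~\ref{Atkinson}.

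Next I would compute the integral of $\psi$. By the definition \eqref{skew-function}, $\psi \equiv +1$ on $[0,\tfrac12)$ and $\psi \equiv -1$ on $[\tfrac12,1)$, and each of these half-open intervals has Lebesgue measure $\tfrac12$. Therefore
$$
\int_I \psi \, d\mbox{Leb} = (+1)\cdot \tfrac12 + (-1)\cdot \tfrac12 = 0.
$$
This is the only explicit computation required, and it is exactly the balance condition that makes the chosen discontinuity point $\tfrac12$ special.

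Finally, I would apply Theorem~\ref{Atkinson} with $X = I$, $F = F_\pi$, $\mu = \mbox{Leb}$: since $\int_I \psi \, d\mbox{Leb} = 0$, the associated cocycle $\Psi$ is recurrent, meaning $\Psi(x,n) = \sum_{j=0}^{n-1}\psi\circ F_\pi^j(x) = 0$ for infinitely many $n$, for $\mbox{Leb}$-a.e.\ $x$. By the definition \eqref{eq-skewprod-general-group} of the skew-product, $T_\pi^n(x,k) = (F_\pi^n(x), k + \Psi(x,n))$, so $\Psi(x,n)=0$ is precisely the statement that the $T_\pi$-orbit returns to the second coordinate $k$ infinitely often; taking $k=0$ gives recurrence of $(I\times\mathbb{Z}, T_\pi, \mbox{Leb}\otimes\nu)$ to the section $I\times\{0\}$, as required.

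I do not anticipate a genuine obstacle here: the lemma is a direct corollary of Atkinson's theorem, and the work is entirely in checking that the hypotheses hold. The only point worth stating carefully is the translation between ``the cocycle $\Psi$ is recurrent'' and ``the skew-product $T_\pi$ is recurrent,'' which is immediate from the skew-product formula but should be spelled out so that the conclusion matches the notion of recurrence used earlier in the introduction.
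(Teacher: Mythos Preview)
Your proposal is correct and matches the paper's approach exactly: the paper presents this lemma as an immediate corollary of Atkinson's theorem (Theorem~\ref{Atkinson}), without even writing a separate proof, since ergodicity of Lebesgue follows from covering via Proposition~\ref{prop:LebErg} and $\int_I \psi\, d\mbox{Leb}=0$ by inspection of \eqref{skew-function}. Your write-up simply spells out these two verifications and the translation to skew-product recurrence, which is a faithful expansion of the paper's one-line justification.
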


If Lebesgue is not the only invariant measure,
i.e., there is a proper minimal subsystem
$(I_{\min}, F_\pi, \mu)$,
we obtain the result:

\begin{lemma}\label{lemma-lebrecurrent1}
Let $(I,F_\pi,\mu)$ be a rotated odometer restricted to its (uniquely ergodic) minimal set, and let $\psi: I \to \mathbb{Z}$ be the skew-function defined in \eqref{skew-function}.
Let $\ell$ denote the leading left eigenvector of the diagonal block $D_1$ in the Frobenius form \eqref{eq:frob} of $M$. If $D_1$ is diagonalizable, then the skew-product $(I \times \mathbb{Z}, T_\pi, \mu \otimes \nu)$ is recurrent if and only if $\psi(\ell) = 0$.
\end{lemma}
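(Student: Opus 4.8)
The plan is to reduce the recurrence question to the vanishing of $\int_{I_{\min}} \psi \, d\mu$ via Atkinson's theorem, and then to compute this integral explicitly in terms of $\ell$. Since the minimal subsystem $(I_{\min}, F_\pi, \mu)$ is uniquely ergodic, it is in particular ergodic, so Theorem~\ref{Atkinson} applies directly: the associated cocycle $\Psi$ --- equivalently the skew-product $(I \times \mathbb Z, T_\pi, \mu \otimes \nu)$ --- is recurrent if and only if $\int_{I_{\min}} \psi \, d\mu = 0$. The whole content of the lemma is therefore the identity $\int_{I_{\min}} \psi \, d\mu = c\,\psi(\ell)$ for some constant $c>0$, after which the claim is immediate. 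As usual for odd $q$, I would work in the split alphabet $\mathcal{A}'$ of Section~\ref{sec:doub}, so that each level of the Kakutani--Rokhlin towers below lies in a single interval of continuity of $\psi$; accordingly $D_1$, $\ell$, and the weight vector $v := (1,\dots,1,-1,\dots,-1)$ are all indexed by $\mathcal{A}'$, and I write $\mathcal{A}_1$ for the letters indexing the block $D_1$.

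To compute the integral, I would use the Kakutani skyscraper over the base $L_k$ coming from Proposition~\ref{lem:periodicpermutations-1}: the minimal set is tiled (up to measure zero) by towers with bases $L_{k,i}$, $i \in \mathcal{A}_1$, return-time heights $|\chi^k(i)|$, and itineraries $\chi^k(i)$ with respect to $\mathcal{P}_{0,q}$. Because $\psi$ is constant on each tower level and $\mu$ is $F_\pi$-invariant, integrating level by level gives
$$
\int_{I_{\min}} \psi \, d\mu = \sum_{i \in \mathcal{A}_1} \mu(L_{k,i}) \, \psi(\chi^k(i)), \qquad \psi(\chi^k(i)) = (D_1^{k} v)_i,
$$
the second identity being the matrix form of the weight from Definition~\ref{defn-weights-words} (the word $\chi^k(i)$ stays in $\mathcal{A}_1$ because the first block-row of $M_F$ is $(D_1,O,\dots,O)$). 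Writing $\vec{p}^{(k)} = (\mu(L_{k,i}))_{i \in \mathcal{A}_1}$ as a row vector, this reads $\int_{I_{\min}} \psi \, d\mu = \vec{p}^{(k)} D_1^{k} v$.

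The key step is to identify $\vec{p}^{(k)}$ with the left Perron eigenvector $\ell$. Telescoping the tower construction --- each level-$k$ base over $j$ occurs $(D_1)_{i,j}$ times inside the level-$(k+1)$ tower over $i$ --- yields the consistency relation $\vec{p}^{(k)} = \vec{p}^{(k+1)} D_1$. If $\lambda$ denotes the leading eigenvalue of $D_1$, then $\vec{p}^{(k)} = \lambda^{-k} c\,\ell$ solves this relation, using $\ell D_1 = \lambda \ell$, and defines an invariant probability measure; by unique ergodicity it must equal $\mu$, so $\vec{p}^{(k)} = \lambda^{-k} c\,\ell$ with $c>0$. (Here irreducibility of the nonzero block $D_1$ gives a simple Perron value and a positive $\ell$, and diagonalizability guarantees the clean spectral picture needed to rule out competing solutions.) Substituting and using $\ell D_1^{k} = \lambda^{k}\ell$,
$$
\int_{I_{\min}} \psi \, d\mu = \lambda^{-k} c\, \ell D_1^{k} v = \lambda^{-k} c\, \lambda^{k} (\ell \cdot v) = c\, \psi(\ell),
$$
which is independent of $k$ and proportional to $\psi(\ell)$ with positive constant, completing the proof.

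I expect the main obstacle to be the precise justification that $\vec{p}^{(k)}$ is \emph{exactly}, not merely asymptotically, proportional to $\ell$ --- that is, marrying the combinatorics of the stationary tower construction with the Perron--Frobenius theory of $D_1$ and the hypothesis of unique ergodicity --- together with the bookkeeping required for the middle-symbol split when $q$ is odd.
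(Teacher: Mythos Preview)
Your argument is correct and reaches the same conclusion, but by a genuinely different route from the paper. The paper's proof is asymptotic: it expands $\psi(\chi_1^n(0))$ and $|\chi_1^n(0)|$ in the eigenbasis of $D_1$, uses the dominance of the Perron eigenvalue to show that the ratio $\psi(\chi^n(0))/|\chi^n(0)|$ tends to $\psi(\ell_0)/\|\ell_0\|_1$, and then invokes Oxtoby's theorem to identify this limit of Birkhoff averages along the $F_\pi$-orbit of $0$ with $\int\psi\,d\mu$. Diagonalizability of $D_1$ is used to write the clean expansion without Jordan-block correction terms.

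Your approach is structural rather than asymptotic: you decompose $I_{\min}$ into the level-$k$ Kakutani--Rokhlin towers, integrate $\psi$ level by level, and then identify the base-measure vector $\vec p^{(k)}$ \emph{exactly} with $\lambda^{-k}c\,\ell$ by unique ergodicity (any positive solution of the consistency relation $\vec p^{(k)}=\vec p^{(k+1)}D_1$ defines an invariant measure, and there is only one). This yields $\int\psi\,d\mu=c\,\psi(\ell)$ with no limit at all. A pleasant by-product is that your argument makes almost no use of the diagonalizability hypothesis: you only need that $D_1$ is primitive so that the Perron eigenvalue is simple and $\ell>0$, which is automatic for the block representing the minimal subsystem. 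The paper's asymptotic route, by contrast, genuinely uses the spectral expansion and hence the diagonalizability assumption (or else would require logarithmic corrections). On the other hand, the paper's computation gives the explicit constant $c=1/\|\ell_0\|_1$, which your argument leaves implicit.
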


\begin{proof}
The minimal part of the odometer is
represented by upper left diagonal block $D_1$ in the Frobenius form.
Using the permutation $\Pi$ of \eqref{eq:frob} for the alphabet $\mathcal{A}$, we can assume that $D_1$ is the associated matrix of the substitution $\chi_1$ acting on the first $q_1$ letters, describing the minimal part of $F_\pi$. 
Then $D_1$ is a $q_1 \times q_1$-matrix, with
eigenvalues $\lambda_0 > |\lambda_1| \geq \dots \geq \lambda_{q_1-1}$, and corresponding left (generalized) eigenvectors $\ell_0, \dots \ell_{q_1-1}$, where we note that since $D_1$ is primitive, then by the Perron-Frobenius Theorem the multiplicity of $\lambda_0$ is $1$.
This substitution then acts on the alphabet $\mathcal{A}_1 = \{ 0, \dots, q_1-1\}$. 
Applying $\psi$ to \eqref{eq:chin},
we can write
$$
\psi(\chi_1^n(0)) = \sum_{j=0}^{q_1-1} \psi(\ell_j) U_{0,j} \lambda_j^n
\quad \text{ and } \quad
|\chi_1^n(0)| = \left\| \sum_{j=0}^{q_1-1} \ell_j U_{0,j} \lambda_j^n \right\|_1.
$$
By minimality, $U_{0,0} \neq 0$.
Since $\lambda_0 > |\lambda_j|$ for $1 \leq j < q_1$,
\begin{eqnarray}\label{eq:quot}
\lim_{n\to\infty}\frac{\psi(\chi^n(0))}{|\chi^n(0)|} 
&=&
\lim_{n\to\infty}\frac{\sum_{j=0}^{q_1-1} \psi(\ell_j) U_{0,j} \lambda_j^n }{\left\| \sum_{j=0}^{q_1-1} \ell_j U_{0,j} \lambda_j^n \right\|_1} \nonumber \\[2mm]
&=&
\lim_{n\to\infty} \frac{ \psi(\ell_j) \left( 1 + \sum_{j=1}^{q_1-1}  \frac{U_{0,j}}{U_{0,0}} \frac{\lambda_j^n}{\lambda_0^n} \right) }
{\| \ell_0 \|_1 \left( 1 +  \sum_{j=1}^{q_1-1} \frac{ \| \ell_j \|_1 }{\| \ell_0\|_1} \frac{U_{0,j}}{U_{0,0}} \frac{\lambda_j^n}{\lambda_0^n} \right) }
= \frac{\psi(\ell_0)}{\| \ell_0 \|_1}.
\end{eqnarray}
 By Atkinson's result,  $(I \times \mathbb{Z}, T_\pi, \mu \otimes \nu)$ is recurrent if and only if $\int \psi \, d\mu = 0$.
 But $\mu$ is the unique invariant probability measure of the minimal subsystem, so Oxtoby's Theorem 
\cite{Ox52}
 and finally \eqref{eq:quot} give 
 for the fixed point $\rho = \rho_0\rho_1\rho_2\dots = \lim_{n\to\infty} \chi^n(0)$:
 $$
 \int \psi \, d\mu = \lim_{N\to\infty} \frac1N \sum_{j=0}^{n-1} \psi(\rho_j) = 
 \lim_{n\to\infty} \frac1{|\chi^n(0)|} \sum_{j=0}^{|\chi^n(0)|-1} \psi(\rho_j) = \lim_{n\to\infty} \frac{\psi(\chi^n(0))}{|\chi^n(0)|} = \frac{\psi(\ell_0)}{\| \ell_0 \|_1}.
 $$
 Therefore  $(I \times \mathbb{Z}, T_\pi, \mu \otimes \nu)$ is recurrent if and only $\psi(\ell_0) = 0$.
\end{proof}

This lemma applies also if $\chi$ is not covering (so Lebesgue is not ergodic), such as in Examples~\ref{ex-7-1ncue} and~\ref{ex-5-2nue}.

\subsection{Non-ergodicity} \label{sec:non-erg}

In this section we establish two criteria for non-ergodicity of skew-products over rotated odometers, based on the properties of the associated substitutions. We recall from Theorem~\ref{thm-e-properties} or \cite[Corollary 5.4]{S77} that the skew-product \eqref{eq-skewprod} is ergodic if and only if  $E(\Psi) = \mathbb{Z}$.

Below we prove Theorem~\ref{thm-1.0-non-essential}, restating it in a more technical way in Theorem~\ref{thm-1-non-essential} for the convenience of the reader. In this proof, we work with the skew-product dynamical system \eqref{eq-skewprod} over a rotated odometer $(I,F_\pi,\mbox{Leb})$. Therefore, if $q$ is odd, we split the middle state $m := \lfloor q/2\rfloor$ in two states $m^+$ and $m^-$, according to the value of the skew-function, see Section~\ref{sec:doub} for details.
The substitution and the associated matrix are still called $\chi$ and $M$.

\begin{theorem}\label{thm-1-non-essential}
Let $F_\pi: [0,1) \to [0,1)$ be a rotated odometer without intervals of periodic points, and suppose the associated substitution $\chi$ is stationary, so Lebesgue measure is ergodic. Let $\Psi: [0,1) \times \mathbb{Z} \to \mathbb{Z}$ be the cocycle, and let $\boldsymbol{d} := \gcd\{ \psi(\chi(a)) : a \in \mathcal{A}\}$. Then the subgroup $E(\Psi)$ of essential values is contained in $\boldsymbol{d}\mathbb{Z}$. In particular, if $\boldsymbol{d} > 1$, then $1$ is not an essential value.
\end{theorem}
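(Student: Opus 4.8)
The plan is to exploit the single most convenient admissible test set in the essential-value criterion, namely the first section $L_1$, using the fact that every first-return excursion to $L_1$ is coded by one of the words $\chi(a)$, $a \in \mathcal A$. First I would record the elementary closure property that $\boldsymbol{d} \mid \psi(\chi^r(a))$ for every $r \geq 1$ and $a \in \mathcal A$: writing $\chi^{r-1}(a) = b_1 \cdots b_\ell$, the word $\chi^r(a) = \chi(b_1)\cdots\chi(b_\ell)$ has weight $\psi(\chi^r(a)) = \sum_{i=1}^\ell \psi(\chi(b_i))$, a sum of multiples of $\boldsymbol{d}$. Only the case $r = 1$ is actually used below. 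Throughout I work in the doubled alphabet of Section~\ref{sec:doub}, so that $\psi$ is constant on each cell of the coding partition $\mathcal P_{0,q}$ and the weight of a coded orbit segment coincides with the weight of its coding word as in Definition~\ref{defn-weights-words}.

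Let $e \in E(\Psi)$. Applying Lemma~\ref{lemma:ess} with $A = L_1$ (which has positive measure) produces an $n \in \mathbb Z$ with $\mu\big(L_1 \cap F_\pi^{-n}(L_1) \cap \{x : S_n\psi(x) = e\}\big) > 0$. If $n < 0$, I would apply the same reasoning to $y = F_\pi^n(x) \in L_1$ over the positive time $-n$, for which the cocycle identity gives $S_{-n}\psi(y) = -e$; thus we may assume $n \geq 0$. Because the intersection has positive measure, I can choose a point $x$ in it whose first $n$ forward iterates avoid the discontinuities of $F_\pi$ and of $\psi$, so that its orbit is coded as in Lemma~\ref{lemma-code-orbits}.

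Since $x \in L_1$ and $F_\pi^n(x) \in L_1$, the integer $n$ is one of the successive return times $0 = n_0 < n_1 < \cdots$ of $x$ to $L_1$, say $n = n_m$, because these are precisely the times at which the orbit of $x$ meets $L_1$. On each excursion $[n_j, n_{j+1})$ the base point $F_\pi^{n_j}(x)$ lies in a unique cell $L_{1,a_j}$, and by Proposition~\ref{lem:periodicpermutations-1} the itinerary of that excursion with respect to $\mathcal P_{0,q}$ is exactly $\chi(a_j)$. Summing weights over the excursions gives
\begin{equation*}
e = S_n\psi(x) = \sum_{j=0}^{m-1}\Big(S_{n_{j+1}}\psi(x) - S_{n_j}\psi(x)\Big) = \sum_{j=0}^{m-1} \psi(\chi(a_j)),
\end{equation*}
and each summand is divisible by $\boldsymbol{d}$, hence so is $e$. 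As $e \in E(\Psi)$ was arbitrary, $E(\Psi) \subseteq \boldsymbol{d}\mathbb Z$, and in particular $1 \notin E(\Psi)$ whenever $\boldsymbol{d} > 1$.

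The step requiring the most care is the block decomposition of the third paragraph: one must verify that the times at which the orbit of $x$ returns to $L_1$ are exactly the partial sums of the first-return times, that the return word is constant on each base $L_{1,a_j}$ (the Kakutani--Rokhlin tower structure underlying Proposition~\ref{lem:periodicpermutations-1}), and that $x$ may indeed be taken off the measure-zero set of points whose orbit hits a discontinuity, so that the identity $\sum_{t=n_j}^{n_{j+1}-1}\psi(F_\pi^t(x)) = \psi(\chi(a_j))$ holds. Once this is secured the divisibility is immediate, and no quantitative control over $n$, $m$, or the letters $a_j$ is needed.
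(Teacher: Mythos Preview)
Your proof is correct and follows essentially the same route as the paper: choose a section (you use $L_1$, the paper uses $L_k$), observe that any return to the section decomposes into a concatenation of first-return excursions each coded by a word $\chi(a_j)$, and conclude that the ergodic sum $S_n\psi(x)=\sum_j\psi(\chi(a_j))$ is a multiple of $\boldsymbol{d}$. Your choice of $k=1$ and your explicit handling of the $n<0$ case make the argument slightly cleaner than the paper's, but there is no genuine methodological difference.
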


\begin{remark}
 Note that Theorem~\ref{thm-1-non-essential} does not imply that $E(\Psi)$ is equal to $\boldsymbol{d}\mathbb{Z}$; rather, it gives a necessary condition for an integer $e \in \mathbb{Z}$ to be an essential value of the cocycle $\Psi$. In particular, $E(\Psi)$ can still be a trivial subgroup of $\mathbb{Z}$, see Theorem~\ref{thm-uniqergod-pisot} and examples after the proof of that theorem.
\end{remark}

\begin{proof} Recall from \cite{S77} that $e \in \mathbb{Z}$ is an essential value of $\Psi$ if and only if for every Borel set $B \subset [0,1)$ with $\mbox{Leb}(B)>0$ we have
\begin{equation}\label{eq-essential}
  \mbox{Leb}\left(\bigcup_{n \in \mathbb{Z}} \left(B \cap F_\pi^{-n}(B) \cap \{x \in B: \Psi(x,n) = e\} \right)\right) > 0.
\end{equation}
We will show that there exists a set $B$ such that if \eqref{eq-essential} holds, then $e \in \boldsymbol{d}\mathbb{Z}$, and then the statement of the theorem follows.

Take $B = L_k$ for some $k \geq 1$, where $L_k$ is the $k$-th section. Recall that $F_\pi$ is invertible except on the orbit of $0$, and so  $\mbox{Leb}(B \cap F_\pi^{-n}(B)) >0$ for some $n >0$ if and only if $\mbox{Leb}(B \cap F_\pi^{n}(B)) >0$.

The section $B = L_k$ of length $2^{-kN}$ is subdivided into the intervals  $L_{k+1,a}$, $0 \leq a \leq q-1$, of length $q^{-1}2^{-kN}$. The length of the substitution word $|\chi(a)|$ determines the first return time of $L_{k+1,a}$ to $L_k$ under iterations of the rotated odometer map $F_\pi$, that is, we have 
 \begin{align*}
 F_{\pi}^{|\chi(a)|}(L_{k+1,a}) \subset L_k, \quad \textrm{ and } \quad F_{\pi}^{s}(L_{k+1,a}) \cap L_k= \emptyset \quad \textrm{ for } 0 < s < |\chi(a)|.
 \end{align*}
We claim that the intersection 
 $$
 S_{k+1,a,b} = F_{\pi}^{|\chi(a)|}(L_{k+1,a}) \cap L_{k+1,b}, \quad b \in \mathcal A,
 $$
 is either empty, or is at most a finite union of half-open intervals.  To see that recall that $F_\pi = \mathfrak{a} \circ R_\pi$, where $R_\pi$ is a finite IET, and $\mathfrak{a}$ is the von Neumann-Kakutani map. The restriction of each $F_\pi^i$, $0 \leq i < |\chi(a)|$, to $L_{k,a}$ is a translation; the interval $L_{k,a}$ is re-arranged under $F^i_\pi$ (by the von Neumann-Kakutani part of the composition) only after its orbit reaches the set $R_\pi^{-1}\left( [ 1 - 2^{-kN}, 1 )\right)$, which happens for $i = |\chi(a)| - 1$. We then have $F^{|\chi(a)|}_\pi(L_{k+1},a) \subset L_k$. 
 
 Recall from \eqref{eq-odometer} that $I$ is subdivided into intervals $I_\ell$, $\ell \geq 1$, of length $|I_\ell| = 2^{-\ell}$, on which $\mathfrak{a}$ is continuous, and which are re-arranged under $\mathfrak{a}$ in the opposite order. Since $q \ne 2^r$, $r \in \mathbb N$, the image $R_\pi \circ F^{|\chi(a)-1|}_\pi(L_{k+1,a})$ may be contained in the union of more than one interval $I_\ell$, and there is a single letter $\iota \in \mathcal A$ such that $R_\pi \circ F^{|\chi(\iota)|-1}_\pi(L_{k+1,\iota}) \supset \bigcup_{\ell > \ell_\iota} I_\ell$, where $\ell_\iota = 2^{kN+1}$. Then $\mathfrak{a} (I_\ell) \subset L_{k+1,0}$ for $\ell > \ell_\iota$, and, for $\ell \leq \ell_\iota$, $I_\ell$ is mapped into a finite union of the sets $L_{k+1,b}$, $b \in \mathcal A$. It follows that either $S_{k+1,a,b} = \emptyset$, or $S_{k+1,a,b}$ is a finite union of half-open intervals. 

Next, note that $L_k \cap F^n_\pi(L_k)$ is non-empty if and only if one of $L_{k,a}$, $a \in \mathcal A$, is mapped into $L_k$ by a power of $F_\pi$. That is, $L_k \cap F^n_\pi(L_k)$ is non-empty if and only if $n = \sum_{i = 1}^t |\chi (a_i)|$ for some $t \geq 1$, and $a_i \in \mathcal A$, $1 \leq i \leq t$. Let $D \subset L_k$ be a subinterval for which $F^n_\pi(D) \subset L_k$; then the itinerary of every point $x \in D$ is described by the concatenated substitution word $W = \chi(a_1) \cdots \chi(a_t)$, and for this word we have
  $$
  \psi(W) = \psi(\chi(a_1))+ \cdots + \psi(\chi(a_t)).$$
Since every weight $\psi(\chi(a_i))$ is a multiple of $\boldsymbol{d}$, for all $x \in D_r$ we have $\Psi(x,n)  \in \boldsymbol{d}\mathbb{Z}$. Since this argument is true for any $n \in \mathbb{Z}$ such that $\mbox{Leb}(L_k \cap F^{-n}(L_k)) >0$, we have that $E(\Psi) \subset \boldsymbol{d}\mathbb{Z}$.
\end{proof}

\begin{example}
{\rm
By Theorem~\ref{thm-1-non-essential} the skew-products over the following  rotated odometers are not ergodic:

\begin{enumerate}
\item For $q=3$ and $\pi = (012), (021)$ in Examples~\ref{ex-3-1ue} and \ref{ex-3-2ue}, and for $q=9$ and $\pi=(1,7,4)(2,5)(3,6)$ in Example~\ref{ex-9-2ue} the rotated odometers are uniquely ergodic with Lebesgue invariant measure. In these examples $\boldsymbol{d} :=  \gcd\{ \psi(\chi(a)) : a \in \mathcal{A}\} = 2$, so $E(\Psi) \subset 2 \mathbb{Z} $ and the skew-product $(I \times \mathbb{Z}, T_\pi, \mbox{Leb} \otimes \nu)$ is not ergodic.

\item For $q=11$ in Example~\ref{ex-q11}, the rotated odometer determined by the permutation $\pi=(0,2,7,6,5,4,3,8,10,1,9)$ is not uniquely ergodic. 
Here again $\boldsymbol{d} =  \gcd\{ \psi(\chi(a)) : a \in \mathcal{A}\} = 2$, so $E(\Psi) \subset 2 \mathbb{Z} $ and the skew-product $(I \times \mathbb{Z}, T_\pi, \mbox{Leb} \otimes \nu)$ is not ergodic.
\end{enumerate}
}
\end{example}

We now prove Theorem~\ref{thm-uniqergod-pisot}.
Recall that a Pisot number is an algebraic integer $> 1$ such that its Galois conjugates are all strictly inside the unit circle.
If $\lambda_j$ is Pisot, this does not preclude that there are other eigenvalues outside the unit disc, even if the weight of the corresponding left eigenvector is non-zero.
After all, the characteristic polynomial of $M$ need not be irreducible.

In the proof below $M$ is a $(q +1) \times (q+1)$-matrix, i.e., $M$ corresponds to the substitution $\chi$ with doubled middle symbol, see \eqref{sec:doub}. The alphabet in Theorem~\ref{thm-uniqergod-pisot} is given by $\mathcal{A} = \{0,1\ldots, m^-, m^+, \dots, q-1\}$, i.e., $\#\mathcal{A} = q+1$, and the numbering in the sums below is from $0$ to $q$.

\begin{proof}[Proof of Theorem~\ref{thm-uniqergod-pisot}]
We will prove that under the assumptions of the theorem, the essential values of the system are $\overline{E}(\Psi) = \{ 0 ,\infty\}$, with $0$ a non-trivial essential value.

Let $t \geq 1$ be the algebraic and geometric multiplicity of $\lambda_c$. By assumption, $\lambda_c$ has $t$ linearly independent left eigenvectors $\ell_{c},\ldots,\ell_{c+t-1}$.  For the smallest $c' > c$ with $\psi(\ell_{c'}) \neq 0$,
 we have by assumption that $|\lambda_{c'}| < 1 < \lambda_c$.
Apply the weight function $\psi$ to \eqref{eq:chin}
to obtain
$$
 \psi(\chi^n(b)) = \sum_{a = 0}^q \psi(\ell_c) U_{b,a}(\lambda_a^n + \epsilon_a n \lambda_a^{n-1} ) =  \left( \sum_{j=0}^{t-1} \psi(\ell_{c+j}) U_{b,c+j} \right) \lambda_c^n + R(n),
$$ 
 where $U_{b,a}$ is the $(b,a)$-entry in the matrix $U$ of some Jordan block decomposition $M = U J U^{-1}$, and $\epsilon_a = 1$ if the Jordan block of $\lambda_a$ is non-diagonal, and $\epsilon_a = 0$ otherwise. Since $\lambda_c$ is the only eigenvalue of absolute value $\geq 1$, whose left eigenvector has non-zero weight, only that term remains
in the rightmost expression, and the remainder term $R(n)$ is exponentially small. In particular, since  $\lambda_a < \lambda_{c'} < 1 < \lambda_c$ for all $a \geq c'$, there exists $n_0$ such that for all $n> n_0$ and all such $a$ we have $n \lambda_a^{n-1} < \lambda_{c}^n$.

Now $\sum_{j=0}^{t-1} \psi(\ell_{c+j}) U_{b,c+j} \neq 0$ for at least one $b$, because otherwise the columns $c, \dots, c+t-1$ of $U$ are linearly dependent, contradicting that $U$ is invertible.
This implies that $|\psi(\chi^n(b))| \to_{n \to \infty} \infty$,
and some $(x,k) \in I \times \mathbb{Z}$ has an unbounded $T_\pi$-orbit.  This shows that $\psi$ is not a coboundary and $\infty \in \overline E(\Psi)$. Since $\chi$ is covering, Lemma~\ref{lemma-lebrecurrent} shows that $0 \in \overline E(\Psi)$. It remains to show that $\overline E(\Psi)$ contains no integer $e \neq 0$.

 Let $H = \max_{a \in \mathcal{A}} |\chi(a)|$, and $E = \sup\{|U_{b,a} \psi(\ell_a) |: b,a \in \mathcal A \}$.
The eigenvalue $\lambda_c$ is algebraic of degree $d \leq q$,
so the corresponding eigenvectors can be chosen in
the field extension $\mathbb{Q}(\lambda_c)$, and we can write, for each $0 \leq j \leq t-1$,
\begin{align}\label{eq-Uac}
\psi(\ell_{c+j}) U_{a,c+j} = \frac{1}{\Delta_{c+j}} \sum_{k=0}^{d-1} p_{k,a}^j \lambda_c^k
\end{align}
for integers $\Delta_{c+j} \ne 0$ and $p_{k,a}^j$.  Let 
$$
\Delta_c =  \prod_{j=0}^{t-1} \Delta_{c+j},
\qquad\quad \Delta_{c \div j} =  \frac{\Delta_c}{\Delta_{c+j}}\ \text{ for } 0\leq j \leq t-1.
$$
Set $G_j = 2\sum_{a \in \mathcal{A}} \sum_{k=0}^{d-1} |p_{k,a}^j| \in \mathbb{N}$, and  
  $$G = t \, \Delta_c \ \max \{G_j: 0 \leq j \leq t-1\}.$$ 
  
Since $\lambda_c$ is Pisot, by the Garsia Separation Lemma \cite[Lemma 1.51]{G62}, there is $C = C(G) > 0$ such that
for all $\alpha \in \mathbb{N}$ and all $\varepsilon_j \in \{-G, \dots, G\}$, $0 \leq j \leq \alpha$,
we have
\begin{equation}\label{eq:GLS}
\left| \sum_{j=0}^\alpha \varepsilon_j \lambda_c^{-j} \right|
 \geq C \lambda_c^{-\alpha} \quad \text{ or } \quad
 \left| \sum_{j=0}^\alpha \varepsilon_j \lambda_c^{-j} \right| = 0.
\end{equation}
Take $k_0 \in \mathbb{N}$ so large that $k_0 > n_0$, and
\begin{equation}\label{eq-k0choice}
|\lambda_c|^{k_0} > \frac{2\Delta_c |e|}{C} \quad  \text{ and } \quad |\lambda_{c'}|^{k_0}  < \frac{1-|\lambda_{c'}|}{4 H E q}.
\end{equation} 
Thus, for a fixed $F_\pi$, the constant $k_0$ depends only on the choice of $e \in \mathbb{Z}$.

Now let $A \subset [0,1]$ be an arbitrary set of positive measure. To show that $e \neq 0$ is an essential value, for every such $A$ we need to find an $n$ such that
\begin{align}\label{eq-findn}
\mbox{Leb}\left(A \cap F_{\pi}^{-n}(A) \cap \left\{ x \in A :\sum_{m=0}^{n-1} \psi \circ F_{\pi}^m(x)  = e \right\} \right) > 0.
\end{align}
In fact, it is not a restriction to assume that $A$ is a $q2^{kN}$-adic interval for some $k \in \mathbb{N}$, since every $A$ would contain such an interval, for $k$ large enough. We will prove that it is possible to choose $A$ so that \eqref{eq-findn} does not hold for any $n \in \mathbb N$. This will show that $e$ cannot be an essential value.

Since $F_\pi$ is covering, there is a $q2^{kN}$-adic subinterval $A'$ of $L_{k}$ such that $F^{k'}_\pi$ maps $A'$ continuously onto $A$ for some $k' \geq 0$. Pulling $A$ back by $F_\pi^{-k'}$, \eqref{eq-findn} is equivalent to finding an $n$ such that 
$$
\mbox{Leb}\left(A' \cap F_{\pi}^{-n}(A') \cap \left\{ x \in A' : \sum_{m=0}^{n-1} \psi \circ F_{\pi}^m (x)= e \right\} \right) > 0.
$$
The return times to the set $L_{k}$ satisfy
$n = \sum_{i=1}^r |\chi^{k}(a_i)|$ for some $r \geq 1$ and $a_i \in \mathcal{A}$. We now assume that $k$ is large enough (and, consequently, $A$ is small enough) so that $ \min \{|\chi^k(a)| : a \in \mathcal{A}\} > k_0$, where $k_0$ is the constant we chose in \eqref{eq-k0choice}.

Since the rotated odometer $(I,F_\pi, \mbox{Leb})$ is ergodic, $A$ contains a full measure set of points returning to $A$ infinitely often. Let $y$ be a Lebesgue typical point in $A$, and suppose $F^n(y) \in A$ for some $n$. By the choice of $A$ we have $n >k_0$, and by the coding procedure in Proposition~\ref{lem:periodicpermutations-1}, the itinerary of $y$ is coded by a word $W$ in the alphabet $\mathcal{A}$ of length $n$. Since $F_\pi$ acts by interval exchanges, there is a positive measure set $A_W \subset A$ (in fact, a subinterval) of points with the same itinerary as $y$.

Similarly to what we did in the proofs of Theorems~\ref{thm-diffusion} and \ref{thm:discr}, we can find (possibly empty) maximal blocks $B_i = b_{i,1} \dots b_{i,h_i}$, $-s \leq i \leq s$, of length $h_i \leq H$ such that
\begin{eqnarray*}
W & = &
\chi^{k_0}(B_{-s}) \chi^{k_0+1}(B_{-s+1}) \cdots
\chi^{k_0+s-1}(B_{-1}) \chi^{k_0+s}(B_0) \chi^{k_0+s-1}(B_1) \\
& & \cdots  \chi^{k_0+1}(B_{s-1}) \chi^{k_0}(B_s).
\end{eqnarray*}

Thus, for all $x \in A_W$,
\begin{align*}
\sum_{m=0}^{n-1} \psi \circ F_{\pi}^m(x) =
\sum_{i=-s}^s \sum_{j=1}^{h_i} \sum_{a = c}^q \psi(\ell_a) U_{b_{i,j},a}
 (\lambda_a^{k_0+s-|i|} + \epsilon_a n \lambda_a^{k_0 + s - |i|-1}) = S_1 + S_2
 \end{align*}
 where
 \begin{eqnarray*}
 |S_1| &:=&  \left| \sum_{i=-s}^s \sum_{j=1}^{h_i} \sum_{a = c'}^q \psi(\ell_{a}) U_{b_{i,j},a}  \left(\lambda_a^{k_0+s-|i|} + \epsilon_a  n \lambda_a^{k_0 + s - |i|-1}\right) \right| \\
 &\leq& 2 H E q \sum_{i=-s}^s |\lambda_{c'}|^{k_0+s-|i|}
 \leq \frac{4H E q  |\lambda_{c'}|^{k_0}}{1- |\lambda_{c'}|} < 1, 
\end{eqnarray*}
because $|\lambda_a| \leq |\lambda_{c'}| < 1$ for all $a < c'$,
and
\begin{eqnarray} \label{eq-S2t}
 S_2 & := &  \lambda_c^{k_0+s}  \sum_{i=-s}^s \sum_{j=1}^{h_i} \sum_{z=0}^{t-1} \psi(\ell_{c+z}) U_{b_{i,j},c+z} \, \lambda_c^{-|i|}  \nonumber \\ 
 &=& \lambda_c^{k_0+s} \sum_{i=-s}^s \sum_{j=1}^{h_i}  \left( \sum_{z = 0}^{t-1} \frac{1}{\Delta_{c+z}}  \sum_{k=0}^{d-1} p^z_{k,b_{i,j}} \lambda_c^{k-|i|} \right) \\ \nonumber
 &=& \frac{\lambda_c^{k_0+s+d}}{\Delta_c} \sum_{u=1}^{s+d} \underbrace{\sum_{z = 0}^{t-1} \Delta_{c \div z}\sum_{k=0}^{d-1}
 \left( \sum_{j=1}^{h_{u+k-d}} p^z_{k,b_{u+k-d,j}} +
 \sum_{j=1}^{h_{-(u+k-d)}} p^z_{k,b_{-(u+k-d),j}} \right) }_{\text{integers in } \{-G, \dots, G \}} \lambda_c^{-u}  \\[2mm] \nonumber
&= & 0
\quad \text{ or } \quad |S_2| \geq \frac{|\lambda_c|^{k_0+s+d}}{\Delta_c}  C |\lambda_c|^{-s-d} \geq |\lambda_c|^{k_0} \frac{C}{\Delta_c}, 
\end{eqnarray}
depending on which possibility of the Garsia Separation Lemma holds.
In the first case,  $|S_1+S_2| < 1$, and in the second case, $|S_1+S_2| > -1 + 2|e| \geq |e|$. Thus we found a set for which \eqref{eq-findn} fails, which shows that $e \neq 0$ is not an essential value.
\end{proof}

\begin{example}
This theorem applies to permutations $(012)$ and $(021)$
for $q = 3$ in Examples~\ref{ex-3-1ue} and \ref{ex-3-2ue}, thanks to the fact that in these examples, all left eigenvectors to eigenvalues $1$ have weight $\psi(\ell_j) = 0$. Why this latter property holds for all our examples is not clear to us, but see Lemma~\ref{lem:weight} for some discussion.
A more interesting example is $(1,7,4)(2,5)(3,6)$ in Example~\ref{ex-9-2ue} because it has the second largest eigenvalue of multiplicity $2$. The rotated odometer corresponding to the substitution $(02431)$ for $q=5$ in Example~\ref{ex-5-3nue} has two ergodic invariant measures, and a non-integer Pisot eigenvalue. In all these examples Theorem~\ref{thm-uniqergod-pisot} applies, and so $\overline{E}(\Psi) = \{0,\infty\}$, with non-trivial essential value $0$.
\end{example}

\subsection{Left eigenvectors of zero weight.}
The above theorem requires, among other assumptions, that the eigenvalues $|\lambda_j| = 1$ have weight $\psi(\ell_j) = 0$. This looks an artificial assumption, but all the examples that we know (including the ones in the next section) have this property. As a partial explanation for this, we consider in the next lemma some situations where $\lambda_j \in \{0,-1,1\}$ and $\psi(\ell_j) = 0$.
The proof is straightforward.

Let $|\chi(a)|_b$ be the number of occurrences of letter $b$ in $\chi(a)$. We denote by $1_a$ the vector with $1$ in the $a$-th entry, and $0$ in the other entries.

\begin{lemma}\label{lem:weight}
 Let $\chi$ be a stationary substitution, and $M$ the associated matrix.
 \begin{enumerate}
  \item If $\chi(a) = \chi(b)$ for some $a < b$,
  then $1_a - 1_b$ is a left eigenvector to eigenvalue $0$.
   \item If $|\chi(a)|_a = |\chi(b)|_a+1$ and
   $|\chi(a)|_b = |\chi(b)|_b-1$
   for some $a < b$, and $|\chi(a)|_j = |\chi(b)|_j$ for $j \neq a,b$,
  then $1_a - 1_b$ is a left eigenvector to eigenvalue $1$.
     \item If $|\chi(a)|_a = |\chi(b)|_a-1$ and
   $|\chi(a)|_b = |\chi(b)|_b+1$
   for some $a < b$, and $|\chi(a)|_j = |\chi(b)|_j$ for $j \neq a,b$,
  then $1_a - 1_b$ is a left eigenvector to eigenvalue $-1$.
     \item If $|\chi(a)|_a = |\chi(b)|_a+1$ and
   $|\chi(a)|_b = |\chi(b)|_b-1$
   for some $a < b$ (like in (2)),
   $\chi(c) = \chi(d)$,
   $|\chi(a)|_c = |\chi(b)|_c+1$ and
   $|\chi(a)|_d = |\chi(b)|_d-1$
   and $|\chi(a)|_j = |\chi(b)|_j$ for all other indices $j$,
  then $1_a - 1_b + 1_c - 1_d$ is a left eigenvector to eigenvalue $1$.
 \end{enumerate}
If additionally $0 \leq a < b \leq m^+$ or $m^- \leq a < b\leq q$, then the left eigenvectors mentioned in (1)-(3) have weight $0$. A similar extended statement holds for case (4).
\end{lemma}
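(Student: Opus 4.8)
The plan is to reduce every assertion to a one-line computation with the rows of $M$. Write $m_a$ for the $a$-th row of $M$, so that $(m_a)_j = (M)_{a,j} = |\chi(a)|_j$. The only fact I need is that left multiplication by $M$ is linear in the standard basis: since $1_a M = m_a$, for any integer coefficients one has $\left(\sum_a v_a 1_a\right) M = \sum_a v_a m_a$. In particular $(1_a - 1_b)M = m_a - m_b$ and $(1_a - 1_b + 1_c - 1_d)M = m_a - m_b + m_c - m_d$. Thus to prove that a given integer vector $v$ is a left eigenvector to eigenvalue $\lambda$, it suffices to check the entrywise identity $\sum_a v_a m_a = \lambda v$, and to note that $v \neq 0$ because its defining letters are distinct.

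For parts (1)--(3) I would compute the difference $m_a - m_b$ directly from the stated hypotheses. In (1), $\chi(a) = \chi(b)$ forces $m_a = m_b$, so $(1_a - 1_b)M = 0$ and the eigenvalue is $0$. In (2) the hypotheses say $(m_a - m_b)_a = 1$, $(m_a - m_b)_b = -1$, and $(m_a - m_b)_j = 0$ for all other $j$; hence $m_a - m_b = 1_a - 1_b$, giving eigenvalue $1$. In (3) the same reading gives $m_a - m_b = -(1_a - 1_b)$, i.e.\ eigenvalue $-1$. For part (4) the extra hypothesis $\chi(c) = \chi(d)$ gives $m_c = m_d$, so $(1_a - 1_b + 1_c - 1_d)M = m_a - m_b$; reading off the four prescribed nonzero entries of $m_a - m_b$ (namely $+1,-1,+1,-1$ at positions $a,b,c,d$, with all other entries zero) yields $m_a - m_b = 1_a - 1_b + 1_c - 1_d$, so the eigenvalue is $1$.

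For the weight statements I would use that $\psi$ is the linear functional $\psi(v) = v \cdot (1, \dots, 1, -1, \dots, -1)$, so that $\psi(1_a)$ equals the sign $+1$ or $-1$ of the letter $a$ determined by the split in Definition~\ref{defn-weights-words}. For the eigenvectors $v = 1_a - 1_b$ of (1)--(3), $\psi(v) = \psi(1_a) - \psi(1_b)$, which vanishes exactly when $a$ and $b$ lie on the same side of the split, i.e.\ when $0 \le a < b \le m^+$ or $m^- \le a < b \le q$. For (4) the analogous extended statement is that $\psi(1_a - 1_b + 1_c - 1_d) = 0$ whenever each of the two pairs $\{a,b\}$ and $\{c,d\}$ lies entirely on one side of the split, since then $\psi(1_a) - \psi(1_b) = 0$ and $\psi(1_c) - \psi(1_d) = 0$ separately. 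There is really no obstacle here: the entire lemma is bookkeeping with the rows of $M$ and the sign vector, and the only point requiring any care is recording the correct side-of-split hypotheses so that the claimed cancellations occur.
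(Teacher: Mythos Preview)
Your proposal is correct and is precisely the straightforward row-by-row verification the paper has in mind (the paper itself simply declares ``The proof is straightforward'' without spelling it out). You have accurately identified that everything reduces to reading off $m_a - m_b$ from the hypotheses and pairing $\psi(1_a)$ with $\psi(1_b)$ on the same side of the split.
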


The conditions in this lemma may look cumbersome and arbitrary, but they do occur because of certain structure of the (stationary) rotated odometer $F_\pi$ and its renormalization.
Indeed, if $L_{1,a}$ and $L_{1,b}$ are $q2^N$-adic subintervals of $L_1$ that have the same return time $n = |\chi(a)| = |\chi(b)|$ to $L_1$, and $F_{\pi}^j$ is an isometry on $L_{1,a} \cup L_{1,b}$,
then if $\frac{a+1}{q}$ lies between $F_\pi^j(L_{1,a})$ and $F_\pi^j(L_{1,b})$ for some $j$,
and no other $\frac{a'}{q}$, $a' \in \mathcal{A}$, has this property, 
then the condition of part (2) holds.
Naturally, this is easiest if $b = a+1$, and it applies to Examples 5.10, 5.11, 5.12, 5.13 (with $a=0, b=4$)

A similar phenomenon occurs in Example 5.5 with $a = 0, b=3$, leading to case (3) of the lemma.

\section{Examples}\label{sec:exam}

In the tables below we list the properties of several stationary (and one non-stationary)  substitutions, whether they are covering or not, and if covering, then whether they are uniquely ergodic or not.

We give the characteristic polynomial
and eigenvalues of the associated matrix $M$ that emerges after doubling the middle symbol $m$ to $m^+$ and $m^-$, see Section~\ref{sec:doub} . The number of ergodic measures of the rotated odometers can easily be read off from the Frobenius form of $M$, see Section~\ref{matrix-measures}. The quantities $\gamma_0$ and $\overline{E}(\Psi)$ are the Lebesgue typical diffusion coefficient and the set of essential values.

In the last line we give the weights of those left eigenvectors of $M$ that belong to non-zero eigenvalues, as
weights of eigenvectors of the eigenvalue $0$ don't affect anything. The scaling of the eigenvectors is arbitrary, as given by a program in Sage which we used to compute the examples. For proofs of our theorems it only matters if the weights are zero or non-zero. Non-rational values are marked by $*$, and we only give them up to at most two first digits. In the case of irrational eigenvalues it is usually clear which (quadratic or cubic) polynomial factors they are a root of. 
Values which are not reported are marked by $**$.

\subsection{Uniquely ergodic rotated odometers}

\begin{example}\label{ex-3-1ue}
{\rm
$$
\begin{array}{l|l}
\hline\hline & \\[-3mm]
\begin{array}{l}
 \pi = (012), \qquad q = 3 \\[2mm]
\textrm{covering}\\[2mm]
\#\{\textrm{ergodic measures}\} = 1 \\[2mm]
\gamma_0 = \frac12, \quad
\overline{E}(\Psi) = \{ 0, \infty\}

\end{array} & M = \begin{pmatrix}
   1 & 0 & 1 & 2 \\
   1 & 0 & 1 & 2 \\
   1 & 0 & 1 & 2 \\
   2 & 2 & 0 & 0
  \end{pmatrix}\\ & \\

 \hline & \\[-3mm]
\begin{array}{l}
a \in \mathcal{A} \\ \textrm{Weight } \psi (\chi (a))
\end{array} & \begin{array}{c |c |c |c } 0 & 1^+ & 1^- & 2    \\ \hline  -2 & -2 & -2&4 \end{array} \\ & \\[-3mm] \hline & \\[-3mm]
\textrm{\ Characteristic polynomial} &
 (x-4)(x+2)x^2\\[2mm] \hline & \\[-3mm]
\begin{array}{l}
\textrm{Eigenvalues} \\ 
\textrm{Weights of left eigenvectors}
\end{array} &
\begin{array}{ c |c |c }  4 & -2 & 0 (\times 2) \\ \hline  0 & -1 &   \phantom{}^{**} \end{array}
\\[-3mm]

\\[2mm] \hline \hline
\end{array}
$$
}
\end{example}

\begin{example}\label{ex-3-2ue}
{\rm
$$
\begin{array}{l|l}
\hline\hline & \\[-3mm]
\begin{array}{l}
 \pi = (021), \quad q = 3 \\[2mm]
\textrm{covering}\\[2mm]
\#\{\textrm{ergodic measures}\} = 1 \\[2mm]
\gamma_0 = \frac12, \quad  \overline{E}(\Psi) = \{ 0, \infty\}

\end{array} & M = \begin{pmatrix}
   2 & 2 & 2 & 4 \\
   1 & 0 & 0 & 0 \\
   1 & 0 & 0 & 0 \\
   1 & 0 & 0 & 0
  \end{pmatrix}\\ & \\

 \hline & \\[-3mm]
\begin{array}{l}
a \in \mathcal{A} \\ \textrm{Weight } \psi (\chi (a))
\end{array} & \begin{array}{c |c |c |c } 0 & 1^+ & 1^- & 2    \\ \hline  -2 & -2 & -2&4 \end{array} \\ & \\[-3mm] \hline & \\[-3mm]
\textrm{\ Characteristic polynomial} &
 (x-4)(x+2)x^2\\[2mm] \hline & \\[-3mm]
\begin{array}{l}
\textrm{Eigenvalues} \\ 
\textrm{Weights of left eigenvectors}
\end{array} &
\begin{array}{ c |c |c }  4 & -2 & 0 (\times 2) \\ \hline  0 & -1 &   \phantom{}^{**} \end{array}
\\  & \\[-3mm]
\hline \hline
\end{array}
$$

In both Examples~\ref{ex-3-1ue} and \ref{ex-3-2ue}, the rotated odometers are stationary and covering, and so all our theorems apply. The weights of all substitution words are even, thus Theorem~\ref{thm-1.0-non-essential} gives $E(\Psi) \subset 2\mathbb{Z}$ in both cases. By Theorem~\ref{thm-uniqergod-pisot} we conclude that $\overline{E}(\Psi) = \{0,\infty\}$.

}
\end{example}

\begin{example}\label{ex-9-2ue}
{\rm
$$
\begin{array}{l|l}
\hline\hline & \\[-3mm]
\begin{array}{l}
 \pi = (1,7,4)(2,5)(3,6)(0)(8)   \\[2mm]
q = 9 \\[2mm]
\textrm{covering}\\[2mm]
\#\{\textrm{ergodic measures}\} = 1\\[2mm]
 \gamma_0 = \frac12 \\[2mm]
\overline{E}(\Psi) = \{ 0, \infty\} 
\end{array} &     M =  \left(\begin{array}{rrrrrrrrrr}
4 & 3 & 1 & 2 & 0 & 1 & 3 & 4 & 1 & 3 \\
3 & 6 & 3 & 3 & 1 & 1 & 3 & 3 & 2 & 3 \\
1 & 2 & 3 & 3 & 1 & 1 & 1 & 1 & 2 & 1 \\
1 & 2 & 3 & 3 & 1 & 1 & 1 & 1 & 2 & 1 \\
1 & 0 & 1 & 1 & 1 & 1 & 1 & 1 & 2 & 1 \\
1 & 0 & 1 & 1 & 1 & 1 & 1 & 1 & 2 & 1 \\
1 & 0 & 1 & 1 & 1 & 1 & 1 & 1 & 2 & 1 \\
1 & 0 & 1 & 1 & 1 & 1 & 1 & 1 & 2 & 1 \\
1 & 0 & 1 & 1 & 1 & 1 & 1 & 1 & 2 & 1 \\
3 & 3 & 2 & 1 & 1 & 0 & 4 & 3 & 1 & 4
\end{array}\right)
 \\ & \\
 \hline & \\[-3mm]
\begin{array}{l}
a \in \mathcal{A} \\ \textrm{Weight } \psi (\chi (a))
\end{array} & \begin{array}{c |c |c |c |c |c| c| c| c| c} 0 & 1 & 2 & 3 & 4^+ & 4^-& 5 & 6 & 7 & 8   \\ \hline  -2 & 4 & 4 & 4 & -2 & -2 & -2 & -2 & -2 & -2\end{array} \\ & \\[-3mm] \hline & \\[-3mm]
\textrm{\ Characteristic polynomial} &
  (x-16)(x - 4)^2(x-1)^2x^5\\[2mm] \hline & \\[-3mm]
\begin{array}{l}
\textrm{Eigenvalues} \\ 
\textrm{Weights of left eigenvectors}
\end{array} &
\begin{array}{ c |c |c| c }  16 & 4 (\times 2) & 1 (\times 2) & 0 (\times 5) \\ \hline  0 & -\frac{1}{2}, 1 &  0,0 & \phantom{}^{**} \end{array}
\\  & \\[-3mm]

\hline \hline
\end{array}
$$
In this example, the rotated odometer is stationary and covering, and so all our theorems apply. Since the weights of all substitution words are even, by Theorem~\ref{thm-1.0-non-essential} we have $E(\Psi) \subset 2\mathbb{Z}$.  The second largest eigenvalue $\lambda_1 = 4$ has both the algebraic and the geometric multiplicity equal to $2$. Thus Theorem~\ref{thm-uniqergod-pisot} applies, and we conclude that $\overline{E}(\Psi) = \{0,\infty\}$.
}
\end{example}

\subsection{A coboundary}

\begin{example}\label{ex-5-1nue}
{\rm
$$
\begin{array}{l|l}
\hline\hline & \\[-3mm]
\begin{array}{l}
 \pi = (01234), \quad q = 5 \\[2mm]
\textrm{covering}\\[2mm]
\#\{\textrm{ergodic measures}\} = 2 \\[2mm]
\psi \textrm{ is coboundary: }\gamma_0 = 0  \\[2mm]
\overline{E}(\Psi) = \{ 0 \}

\end{array} &  M = \begin{pmatrix}
   1 & 0 & 0 & 0 & 1 & 0 \\
   1 & 0 & 0 & 0 & 1 & 0 \\
   1 & 0 & 0 & 0 & 1 & 0 \\
   1 & 0 & 0 & 0 & 1 & 0 \\
   1 & 0 & 0 & 0 & 1 & 0 \\
   4 & 8 & 4 & 4 & 4 & 8
  \end{pmatrix} \\ & \\

 \hline & \\[-3mm]
\begin{array}{l}
a \in \mathcal{A} \\ \textrm{Weight } \psi (\chi (a))
\end{array} & \begin{array}{c |c |c |c |c |c} 0 & 1 & 2^+ & 2^- & 3 & 4    \\ \hline  0 & 0 & 0&0&0& 0\end{array} \\ & \\[-3mm] \hline & \\[-3mm]
\textrm{\ Characteristic polynomial} &
  (x-8)(x - 2)x^4\\[2mm] \hline & \\[-3mm]
\begin{array}{l}
\textrm{Eigenvalues} \\ 
\textrm{Weights of left eigenvectors}
\end{array} &
\begin{array}{ c |c |c }  8 & 2 & 0 (\times 4) \\ \hline  0 & 0 &   \phantom{}^{**} \end{array}
\\  & \\[-3mm]

\hline \hline
\end{array}
$$

}
\end{example}
For $\pi = (01234)$, the cocycle $\Psi$ is a coboundary because the weight of all substitution words is $0$. Hence all orbits in $(I \times \mathbb{Z}, T_\pi, \mbox{Leb} \otimes \nu)$ are bounded. The minimal subset $(I_{min}, F_\pi)$ corresponds to the symbols $\{0,3\}$, and lifts to the suspension over a dyadic odometer in $(I \times \mathbb{Z}, T_\pi,\mbox{Leb} \otimes \nu)$, which has zero measure with respect to $\mbox{Leb} \otimes \nu$.
 Lebesgue itself is the only other ergodic $F_\pi$-measure.
 
 Note that the weight of the left eigenvector associated to the second largest eigenvalue is $0$, which shows again that the orbits of $T_\pi$ stay within a bounded subset (compare with Theorem~\ref{thm-uniqergod-pisot}).
 
 \subsection{Non-uniquely ergodic covering substitutions}
 
 \begin{example}\label{ex-5-3nue}
{\rm
$$
\begin{array}{l|l}
\hline\hline & \\[-3mm]
\begin{array}{l}
 \pi = (02431), \quad q = 5 \\[2mm]
\textrm{covering}\\[2mm]
\#\{\textrm{ergodic measures}\} = 2 \\[2mm]
\gamma_0 = 0.694... \\[2mm]
\overline{E}(\Psi) = \{ 0, \infty\}
\end{array} & M = \begin{pmatrix}
   1 & 1 & 1 & 1 & 0 & 1 \\
   1 & 0 & 1 & 0 & 0 & 1 \\
   2 & 1 & 0 & 1 & 0 & 1 \\
   2 & 1 & 0 & 1 & 0 & 1 \\
   3 & 5 & 2 & 1 & 8 & 5 \\
   1 & 1 & 0 & 1 & 0 & 0
  \end{pmatrix}
 \\ & \\
 \hline & \\[-3mm]
\begin{array}{l}
a \in \mathcal{A} \\ \textrm{Weight } \psi (\chi (a))
\end{array} & \begin{array}{c |c |c |c |c |c} 0 & 1 & 2^+ & 2^- & 3 & 4    \\ \hline  1 & 1 & 1&1&-4& 1\end{array} \\ & \\[-3mm] \hline & \\[-3mm]
\textrm{\ Characteristic polynomial} &
  (x-8)(x + 1)^2 (x - 2 - \sqrt 5)(x -2 + \sqrt 5)x\\[2mm] \hline & \\[-3mm]
\begin{array}{l}
\textrm{Eigenvalues} \\
\textrm{Weights of left eigenvectors}
\end{array} &
\begin{array}{ c |c |c | c|c }  8 & 2 + \sqrt 5 & -1 (\times 2) & 2 - \sqrt 5 & 0 \\ \hline  0 &   \frac{1}{2}( \frac{3}{\sqrt 5} - 1) & 0,0 & - \frac{1}{2}( \frac{3}{\sqrt 5} + 1) &   \phantom{}^{**} \end{array}
\\  & \\[-3mm]
\end{array}
$$

The rotated odometer in this example has two ergodic invariant measures, one supported on the minimal set, another one Lebesgue. The minimal set corresponds to the symbols $\{ 0,1,2,4\}$, all of which have positive weights. Thus the lift of $(I_{\min},F_\pi,\mu)$, where $\mu$ is the measure supported on the minimal set, is the skew-product $(I \times \mathbb{Z}, T_\pi,\mu \otimes \nu)$ which is transient, namely, $\overline{E}(\Psi|_{\mu \otimes \nu}) = \{0,\infty\}$, with $0$ a trivial essential value. This means that the orbits of typical points with respect to $\mu \otimes \nu$ escape to $\infty$ without returning to $0$. Here, as before, $\nu$ is the counting measure on $\mathbb{Z}$. 

The situation is different for $(I,F_\pi,\mbox{Leb})$. Since the rotated odometer is stationary, and Lebesgue measure is ergodic, Atkinson's result in Lemma~\ref{lemma-lebrecurrent} applies, and $0$ is a non-trivial essential value of $(I,F_\pi,\mbox{Leb})$. There are two eigenvalues outside of the unit circle, with one of left eigenvectors having non-zero weight. Thus Theorem~\ref{thm-uniqergod-pisot} gives $\overline{E}(\Psi) = \{0,\infty\}$. This means that the orbits of typical points with respect to $\mbox{Leb} \otimes \nu$ `oscillate' in $I \times \mathbb{Z}$, going further and further away from $0$, but also returning to $0$ infinitely many times.
 
}
\end{example}

\begin{example}\label{ex-9-1ue}
{\rm
$$
\begin{array}{l|l}
\hline\hline & \\[-3mm]
\begin{array}{l}
 \pi = (0,6,5,8,4,7,3)(1)(2)  \\[2mm]
q = 9 \\[2mm]
\textrm{covering}\\[2mm]
\#\{\textrm{ergodic measures}\} = 2\\[2mm]
 \gamma_0 = 0.5515... \\[2mm]
\overline{E}(\Psi) = \{0, \infty\} \cup \, ?
\end{array} &    M =  \left(\begin{array}{rrrrrrrrrr}
2 & 1 & 0 & 1 & 0 & 0 & 2 & 0 & 0 & 0 \\
1 & 1 & 0 & 1 & 0 & 0 & 2 & 0 & 0 & 0 \\
1 & 1 & 0 & 1 & 0 & 0 & 2 & 0 & 0 & 0 \\
1 & 0 & 0 & 1 & 0 & 0 & 1 & 0 & 0 & 0 \\
1 & 1 & 0 & 0 & 1 & 0 & 1 & 0 & 0 & 0 \\
1 & 1 & 0 & 0 & 1 & 0 & 1 & 0 & 0 & 0 \\
1 & 1 & 0 & 0 & 1 & 0 & 1 & 0 & 0 & 0 \\
5 & 7 & 10 & 10 & 4 & 8 & 3 & 12 & 8 & 8 \\
2 & 2 & 3 & 1 & 1 & 0 & 2 & 2 & 4 & 4 \\
2 & 2 & 3 & 1 & 1 & 0 & 2 & 2 & 4 & 4
\end{array}\right)

 \\ & \\

 \hline & \\[-3mm]
\begin{array}{l}
a \in \mathcal{A} \\ \textrm{Weight } \psi (\chi (a))
\end{array} & \begin{array}{c |c |c |c |c |c| c| c| c| c} 0 & 1 & 2 & 3 & 4^+ & 4^-& 5 & 6 & 7 & 8   \\ \hline  2 & 1 & 1 & 1 & 2 & 2 & 2 & -3 & -3 & -3\end{array} \\ & \\[-3mm] \hline & \\[-3mm]
\textrm{\ Characteristic polynomial} &
  (x-16)(x - 4)(x-1)(x^3 - 5x^2 + 2x -1)x^4\\[2mm] \hline & \\[-3mm]
\begin{array}{l}
\textrm{Eigenvalues} \\ 
\textrm{Weights of left eigenvectors}
\end{array} &
\begin{array}{ c |c |c| c|c|c|c }  16 & 4.614^* & 4 & 1 &0.19 - 0.42i^*  & 0.19 + 0.42i^*   & 0 (\times 4) \\ \hline  0 & 0.48^* &  1 & 0 & 0.26 - 0.36i^* & 0.26+0.36i^* & \phantom{}^{**} \end{array}
\\  & \\[-3mm]

\hline \hline
\end{array}
$$

The rotated odometer in this example has two ergodic invariant measures, as we can see from the Frobenius form of $M$.

For the rotated odometer $(I,F_\pi,\mbox{Leb})$,
 $0$ is a non-trivial essential value by Lemma~\ref{lemma-lebrecurrent}, so Lebesgue typical points with respect to $\mbox{Leb} \otimes \nu$ return infinitely often to $0$. The substitution matrix $M$ has three eigenvalues outside of the unit circle, two integer and one irrational (Pisot) eigenvalue. Left eigenvectors corresponding to two of these eigenvalues have non-zero weights, so Theorem~\ref{thm-uniqergod-pisot} does not apply, but we still conjecture the absence of non-zero integer essential values.
}
\end{example}

\begin{example}\label{ex-q11}
{\rm
$$
\begin{array}{l|l}
\hline\hline & \\[-3mm]
& \\
\begin{array}{l}
\pi = (0,2,7,6,5,4,3,8,10,1,9) \\[2mm]
q = 11 \\[2mm]
\textrm{covering}\\[2mm]
\#\{\textrm{ergodic measures}\} = 2\\[2mm]
\gamma_0 = 0.625... \\[2mm]
\overline{E}(\Psi) \subseteq 2\mathbb{Z} \cup \{\infty\}
\end{array} &
M = \left(\begin{array}{rrrrrrrrrrrr}
2 & 1 & 0 & 1 & 0 & 0 & 0 & 0 & 1 & 1 & 0 & 0 \\
1 & 2 & 1 & 1 & 1 & 0 & 0 & 0 & 1 & 1 & 0 & 0 \\
1 & 0 & 1 & 1 & 1 & 0 & 0 & 0 & 1 & 1 & 0 & 0 \\
1 & 0 & 1 & 1 & 1 & 0 & 0 & 0 & 1 & 1 & 0 & 0 \\
1 & 0 & 0 & 0 & 1 & 0 & 0 & 0 & 1 & 1 & 0 & 0 \\
1 & 0 & 0 & 0 & 1 & 0 & 0 & 0 & 1 & 1 & 0 & 0 \\
1 & 0 & 0 & 0 & 1 & 0 & 0 & 0 & 1 & 1 & 0 & 0 \\
1 & 0 & 0 & 0 & 1 & 0 & 0 & 0 & 1 & 1 & 0 & 0 \\
1 & 0 & 0 & 0 & 1 & 0 & 0 & 0 & 1 & 1 & 0 & 0 \\
1 & 1 & 1 & 0 & 1 & 0 & 0 & 0 & 0 & 2 & 0 & 0 \\
3 & 6 & 6 & 6 & 4 & 4 & 4 & 8 & 4 & 3 & 8 & 8 \\
3 & 6 & 6 & 6 & 4 & 4 & 4 & 8 & 4 & 3 & 8 & 8
\end{array}\right) \\ & \\

 \hline & \\[-3mm]
\begin{array}{l}
a \in \mathcal{A} \\ \textrm{Weight } \psi (\chi (a))
\end{array} & \begin{array}{c |c |c |c |c |c| c| c| c| c| c| c} 0 & 1 & 2 & 3 & 4 & 5^+& 5^- & 6 & 7 & 8 & 9 & 10  \\ \hline  2 & 4 & 2 & 2 & 0 & 0 & 0 & 0 & 0 & 2 & -6 & -6\end{array} \\ & \\[-3mm] \hline & \\[-3mm]
\textrm{\ Characteristic polynomial} &
  (x-16)(x^3 - 8x^2 + 16x - 16)(x-1)^2x^6\\[2mm] \hline & \\[-3mm]
\begin{array}{l}
\textrm{Eigenvalue } \lambda_j \\
\textrm{Weight } \psi(\ell_j)
\end{array} &
\begin{array}{ c |c |c| c| c| c }  16 & 5.679^* & 1.161 - 1.213i^*  & 1.161 + 1.213i^* & 1 (\times 2)& 0 (\times 6) \\ \hline  0 & 0.34^* &  -0.17 + 0.20i^* &
-0.17 - 0.20i^*  & 0, 0 & \phantom{}^{**} \end{array}
\\  & \\[-3mm]
\hline \hline
\end{array}
$$
}
\end{example}

The rotated odometer in this example has two ergodic invariant measures, with one of the measures, $\mu$, supported on the minimal set $I_{min} \subset I$, corresponding to the part of the matrix with symbols $\{0,1,2,3,4, 7, 8\}$. The weights of the substitution words corresponding to these symbols are all positive or zero, thus the skew-product $(I \times \mathbb{Z}, T_\pi,\mu \otimes \nu)$ is transient, namely, $\overline{E}(\Psi|_{\mu \otimes \nu}) = \{0,\infty\}$, with $0$ a trivial essential value. The orbits of typical points with respect to $\mu \otimes \nu$ tend to $\infty$ without returning to $0$. 

For the rotated odometer $(I ,F_\pi,\mbox{Leb})$,  $0$ is a non-trivial essential value
by Lemma~\ref{lemma-lebrecurrent}, so $\mbox{Leb} \otimes \nu$-typical points return infinitely often to $0$. Non-zero weights of substitution words are even, which implies by Theorem~\ref{thm-1.0-non-essential} that $\overline{E}(\Psi) \subset 2\mathbb{Z} \cup \{\infty\}$, so $1$ is not an essential value, and $\mbox{Leb} \otimes \nu$ is not ergodic. The second largest eigenvalue is not Pisot, so Theorem~\ref{thm-uniqergod-pisot} does not apply, and we cannot rule out the existence of non-zero integer essential values.

\begin{example}
$$
\begin{array}{l|l}
\hline\hline & \\[-3mm]
& \\
\begin{array}{l}
\pi =  (0,8,7,3,2,1)(4,9)(5,10,6) \\[2mm]
q = 11 \\[2mm]
\textrm{covering}\\[2mm]
\#\{\textrm{ergodic measures}\} = 3\\[2mm]
\gamma_0 = 0.694 ... \\[2mm]
\overline{E}(\Psi) =  \{0\} \cup \, ?
\end{array} &
M = \left(\begin{array}{rrrrrrrrrrrr}
1 & 1 & 0 & 0 & 0 & 1 & 1 & 0 & 0 & 0 & 0 & 0 \\
1 & 0 & 0 & 0 & 0 & 1 & 0 & 0 & 0 & 0 & 0 & 0 \\
1 & 0 & 0 & 0 & 0 & 1 & 0 & 0 & 0 & 0 & 0 & 0 \\
1 & 0 & 0 & 0 & 0 & 1 & 0 & 0 & 0 & 0 & 0 & 0 \\
1 & 1 & 0 & 0 & 0 & 0 & 1 & 0 & 0 & 0 & 0 & 0 \\
1 & 1 & 0 & 0 & 0 & 0 & 1 & 0 & 0 & 0 & 0 & 0 \\
1 & 1 & 0 & 0 & 0 & 0 & 1 & 0 & 0 & 0 & 0 & 0 \\
2 & 2 & 3 & 1 & 2 & 1 & 1 & 2 & 2 & 3 & 0 & 1 \\
1 & 2 & 1 & 1 & 2 & 0 & 1 & 1 & 1 & 2 & 0 & 1 \\
2 & 3 & 2 & 1 & 2 & 1 & 1 & 3 & 1 & 2 & 0 & 2 \\
4 & 4 & 8 & 13 & 9 & 1 & 1 & 8 & 11 & 8 & 16 & 11 \\
1 & 2 & 2 & 0 & 1 & 1 & 1 & 2 & 1 & 1 & 0 & 1
\end{array}\right) \\ & \\

 \hline & \\[-3mm]
\begin{array}{l}
a \in \mathcal{A} \\ \textrm{Weight } \psi (\chi (a))
\end{array} & \begin{array}{c |c |c |c |c |c| c| c| c| c| c| c} 0 & 1 & 2 & 3 & 4 & 5^+& 5^- & 6 & 7 & 8 & 9 & 10  \\ \hline  2 & 2 & 2 & 2 & 1 & 1 & 1 & 2 & 1 & 2& -16 &1  \end{array} \\ & \\[-3mm] \hline & \\[-3mm]
\textrm{\ Characteristic polynomial} &
  (x-16)(x^2 - 7x+1)(x^2+x+1)(x^3 - 2x^2-3x-1)x^4\\[2mm] \hline & \\[-3mm]
\begin{array}{l}
\textrm{Eigenvalue } \lambda_j \\
\textrm{Weight } \psi(\ell_j)
\end{array} &
\begin{array}{ c |c |c| c| c| c |c  }  16 & 6.85^* & 3.08^*  & 0.14^* & -0.5 \pm 0.86 i^* & - 0.54 \pm 0.18i& 0 (\times 4) \\ \hline  0 & 1.4^* & 1.6^* & 3.6^*&  0,0 &
1.2 \mp 1.3i^*   & \phantom{}^{**} \end{array}
\\  & \\[-3mm]

\hline \hline
\end{array}
$$

The rotated odometer in this example has three ergodic invariant measures, which can be seen from the Frobenius form of $M$:
$$ 
\left(\begin{array}{rrrr|rrr|rrrr|r}
1 & 1 & 1 & 1 & 0 & 0 & 0 & 0 & 0 & 0 & 0 & 0 \\
1 & 0 & 0 & 1 & 0 & 0 & 0 & 0 & 0 & 0 & 0 & 0 \\
1 & 1 & 1 & 1 & 0 & 0 & 0 & 0 & 0 & 0 & 0 & 0 \\
1 & 1 & 1 & 1 & 0 & 0 & 0 & 0 & 0 & 0 & 0 & 0 \\ \hline
1 & 1 & 1 & 0 & 0 & 0 & 0 & 0 & 0 & 0 & 0 & 0 \\
1 & 0 & 0 & 0 & 0 & 0 & 0 & 0 & 0 & 0 & 0 & 0 \\
1 & 0 & 0 & 0 & 0 & 0 & 0 & 0 & 0 & 0 & 0 & 0 \\ \hline
2 & 2 & 1 & 1 & 2 & 1 & 3 & 2 & 2 & 3 & 1 & 0 \\
1 & 2 & 1 & 1 & 2 & 1 & 1 & 1 & 1 & 2 & 1 & 0 \\
2 & 3 & 1 & 1 & 2 & 1 & 2 & 3 & 1 & 2 & 2 & 0 \\
1 & 2 & 1 & 0 & 1 & 0 & 2 & 2 & 1 & 1 & 1 & 0 \\ \hline
4 & 4 & 1 & 1 & 9 & 13 & 8 & 8 & 11 & 8 & 11 & 16 
\end{array}\right)
$$
Denote by $\mu_1$ and $\mu_2$ the measures corresponding to the first and the second non-zero block, with leading eigenvalues $3.08... < 6.85...$, respectively. The weights of the symbols in these blocks are all positive, which means that $(I \times \mathbb{Z},T_\pi, \mu_1 \otimes \nu)$ and $(I\times \mathbb{Z},T_\pi, \mu_2 \otimes \nu)$ are both transient. Since the rotated odometer is covering, $(I \times \mathbb{Z},T_\pi, \mbox{Leb} \otimes \nu)$ is recurrent by Lemma~\ref{lemma-lebrecurrent}. However, $\gcd\{\psi(\chi(a)) : a \in \mathcal{A}\} = 1$ so Theorem~\ref{thm-1.0-non-essential} does not allow us to rule out any integer essential values. Since $M$ has multiple eigenvalues with non-zero weights of left eigenvectors outside of the unit circle, Theorem~\ref{thm-uniqergod-pisot} does not apply, and we cannot decide if $\mbox{Leb} \otimes \nu$ is ergodic or not.

\end{example}

 \subsection{Subsitutions which are not covering}
 
 \begin{example}\label{ex-7-1ncue}
{\rm
$$
\begin{array}{l|l}
\hline\hline & \\[-3mm]
\begin{array}{l}
 \pi = (0,6,5,4,3,2,1) \\[2mm]
q = 7 \\[2mm]
\textrm{not covering}\\[2mm]
\#\{\textrm{ergodic measures}\} = \infty \\[2mm]
\textrm{transient: } \gamma_0 = 1 \\[2mm]
\overline{E}(\Psi) = \{ \infty\}

\end{array} &   M = \begin{pmatrix}
2 & 2 & 0 & 0 & 1 & 1 & 0 & 2 \\
1 & 0 & 0 & 0 & 0 & 0 & 0 & 0 \\
1 & 0 & 0 & 0 & 0 & 0 & 0 & 0 \\
1 & 0 & 0 & 0 & 0 & 0 & 0 & 0 \\
1 & 0 & 0 & 0 & 0 & 0 & 0 & 0 \\
1 & 0 & 0 & 0 & 0 & 0 & 0 & 0 \\
1 & 0 & 0 & 0 & 0 & 0 & 0 & 0 \\
1 & 0 & 0 & 0 & 0 & 0 & 0 & 0
  \end{pmatrix}
 \\ & \\

 \hline & \\[-3mm]
\begin{array}{l}
a \in \mathcal{A} \\ \textrm{Weight } \psi (\chi (a))
\end{array} & \begin{array}{c |c |c |c |c |c| c|c} 0 & 1 & 2 & 3^+ & 3^- & 4 & 5 & 6    \\ \hline  0 & 1 & 1 & 1 & 1 & 1 & 1 & 1 \end{array} \\ & \\[-3mm] \hline & \\[-3mm]
\textrm{\ Characteristic polynomial} &
(x^2-2x-6) x^6\\[2mm] \hline & \\[-3mm]
\begin{array}{l}
\textrm{Eigenvalues} \\ 
\textrm{Weights of left eigenvectors}
\end{array} &
\begin{array}{ c |c |c }  1 + \sqrt{7} & 1- \sqrt 7  & 0 (\times 4) \\ \hline  \frac{1}{3}(4- \sqrt 7)& \frac{1}{3}(4 + \sqrt 7) & \phantom{}^{**} \end{array}
\\  & \\[-3mm]

\hline \hline
\end{array}
$$

The rotated odometer in this example is not covering, which verified by noticing that the column sums in the substitution matrix are not $2^N = 8$, and which means  that $I = I_{per} \cup I_{np}$, where $I_{per}$ is a non-empty union of intervals of periodic points. Since the substitution is stationary, $I_{per}$ is an infinite union of intervals, with infinite number of periods. 
Since $\psi(\chi(a)) = 1 > \psi(\chi(0)) = 0$ for all $a \ne 0 \in \mathcal{A}$,
the $z$-extension $(I_{np} \times \mathbb{Z}, T_\pi)$ is transient.

}
\end{example}

 \begin{example}\label{ex-5-2nue}
{\rm
$$
\begin{array}{l|l}
\hline\hline & \\[-3mm]
\begin{array}{l}
 \pi = (04123), \quad q = 5 \\[2mm]
\textrm{not covering}\\[2mm]
\#\{\textrm{ergodic measures}\} = 2 
\end{array} & M = \begin{pmatrix}
   2 & 0 & 0 & 0 & 0 & 0 \\
   2 & 3 & 2 & 1 & 4 & 4 \\
   2 & 3 & 2 & 1 & 4 & 4 \\
   2 & 3 & 2 & 1 & 4 & 4 \\
   1 & 0 & 0 & 0 & 0 & 0 \\
   1 & 0 & 0 & 0 & 0 & 0
  \end{pmatrix}
 \\
 \hline & \\[-3mm]
\begin{array}{l}
a \in \mathcal{A} \\ \textrm{Weight } \psi (\chi (a))
\end{array} & \begin{array}{c |c |c |c |c |c} 0 & 1 & 2^+ & 2^- & 3 & 4    \\ \hline  2 & -2 & -2&-2&1& 1\end{array} \\ & \\[-3mm] \hline & \\[-3mm]
\textrm{\ Characteristic polynomial} &
  (x-6)(x - 2)x^4\\[2mm] \hline & \\[-3mm]
\begin{array}{l}
\textrm{Eigenvalues} \\ 
\textrm{Weights of left eigenvectors}
\end{array} &
\begin{array}{ c |c |c }  6 & 2 & 0 (\times 4) \\ \hline  \frac{1}{6} & \frac{1}{2} &   \phantom{}^{**} \end{array}
\\  & \\[-3mm]
\hline \hline
\end{array}
$$
The weight of the leading left eigenvector is positive, which is possible since this substitution is not covering. The minimal part is a dyadic odometer; it lifts to a subsystem on the staircase that is also transient to $+\infty$, with diffusion coefficient $1$. }
\end{example}

\subsection{A non-stationary example}

\begin{example}\label{ex-5-3nue-ns}
{\rm
$$
\begin{array}{l|l}
\hline\hline & \\[-3mm]
\begin{array}{l}
 \pi = (01243), \quad q = 5 \\[2mm]
\textrm{covering}\\[2mm]
\#\{\textrm{ergodic measures}\} = 2 \\[2mm]
\gamma_0 = \frac13, \quad \overline{E}(\Psi) = \{ 0, \infty\}

\end{array} & M = \begin{pmatrix}
   2 & 0 & 0 & 0 & 0 & 0 \\
   2 & 4 & 2 & 2 & 4 & 4 \\
   2 & 4 & 2 & 2 & 4 & 4 \\
   2 & 4 & 2 & 2 & 4 & 4 \\
   1 & 0 & 0 & 0 & 0 & 0 \\
   1 & 0 & 0 & 0 & 0 & 0
  \end{pmatrix}
 \\ & \\

 \hline & \\[-3mm]
\begin{array}{l}
a \in \mathcal{A} \\ \textrm{Weight } \psi (\chi (a))
\end{array} & \begin{array}{c |c |c |c |c |c} 0 & 1 & 2^+ & 2^- & 3 & 4    \\ \hline  2 & -2 & -2&-2& 1& 1\end{array} \\ & \\[-3mm] \hline & \\[-3mm]
\textrm{\ Characteristic polynomial} &
  (x-8)(x -2) x^4\\[2mm] \hline & \\[-3mm]
\begin{array}{l}
\textrm{Eigenvalues} \\ 
\textrm{Weights of left eigenvectors}
\end{array} &
\begin{array}{ c |c |c  }  8 & 2 & 0 (\times 4) \\ \hline  0 & -2 &   \phantom{}^{**} \end{array}
\\  & \\[-3mm]
\hline \hline
\end{array}
$$
For $\pi = (01243)$, the renormalization of $R_{\pi}$ gives $R_{\pi'}$ for $\pi' = (01234)$, see Example~\ref{ex-5-1nue}.  In that example, $\Psi$ is a coboundary, so every orbit of the $\mathbb{Z}$-extension is bounded. 
The composition with $F_\pi$ changes this. For example, the minimal set of $F_{\pi'}$ in Example~\ref{ex-5-1nue} is the dyadic odometer, the minimal subsystem corresponds to the subsystem $\chi'(0) = \chi'(3) = 03$. Because $\psi(\chi(0)) = 2$ and $\psi(\chi(3)) = 1$ for $F_\pi$,
the $\mathbb{Z}$-extension of the minimal subsystem of $F_{\pi}$, which is also the dyadic odometer, becomes transient.

}
\end{example}

\end{document}